\definecolor{myred}{HTML}{880000}
\definecolor{mygreen}{HTML}{008800}
\definecolor{myblue}{HTML}{000088}
\definecolor{linkblue}{HTML}{0000BB}
\newcommand{\R}{\mathbf R}
\newcommand{\E}{{\mathbf E}}
\renewcommand{\P}{\mathbf P}
\newcommand{\norm}[1]{\lVert {#1} \rVert}
\renewcommand{\leq}{\leqslant}
\renewcommand{\geq}{\geqslant}
\renewcommand{\le}{\leqslant}
\renewcommand{\ge}{\geqslant}
\newcommand{\argmin}{\mathop{\mathrm{arg}\,\mathrm{min}}}
\newcommand{\wh}{\widehat}
\newcommand{\di}{\mathrm{d}}
\DeclareMathOperator{\tr}{Tr}
\newcommand{\indic}[1]{\bm 1 ( #1 )}
\newcommand{\gaussdist}{\mathcal{N}}
\newcommand{\F}{\mathcal{F}}
\newcommand{\X}{\mathcal{X}}
\renewcommand{\top}{\mathsf{T}}
\newcommand{\ie}{\textit{i.e.,}\@\xspace}
\newcommand{\eg}{e.g.,\@\xspace}
\newcommand{\iid}{i.i.d.\@\xspace}
\newcommand{\probas}{\mathcal{P}}
\newcommand{\localc}[2]{\mathsf{C}_{\mathsf{loc}} (#1, #2)}
\newcommand{\globalc}[2]{\mathsf{C}_{\mathsf{glob}} (#1, #2)}
\newcommand{\ew}{\mathsf{ew}}
\newcommand{\few}{\wh f^{\ew}}
\newcommand{\fq}{\wh f^{\mathsf{q}}}
\newcommand{\pew}{\wh \rho^{\ew}}
\newcommand{\pq}{\wh \rho^{\mathsf{q}}}
\newcommand{\fprog}{\wh f^{\mathsf{pm}}}
\newcommand{\opnorm}[1]{\| {#1} \|_{\mathrm{op}}}
\newcommand{\kl}{\mathrm{KL}}
\newcommand{\kll}[2]{\kl ({#1}, {#2})}
\newtheorem{proposition}{Proposition}
\newtheorem{theorem}{Theorem}
\newtheorem*{theorem*}{Theorem}
\newtheorem{lemma}{Lemma}
\theoremstyle{definition}
\newtheorem{example}{Example}
\theoremstyle{remark}
\newcommand{\innerp}[2]{\langle #1 , #2 \rangle}
\title{Local Risk Bounds for Statistical Aggregation}
\author{
  Jaouad Mourtada\thanks{CREST, ENSAE, Institut Polytechnique de Paris, France, \href{mailto:jaouad.mourtada@ensae.fr}{jaouad.mourtada@ensae.fr}}
  \qquad
  Tomas Va\v{s}kevi\v{c}ius\thanks{Institute of Mathematics, EPFL, Switzerland,
\href{mailto:tomas.vaskevicius@epfl.ch}{tomas.vaskevicius@epfl.ch}}
  \qquad
  Nikita Zhivotovskiy\thanks{Department of Statistics, University of California Berkeley, United States, \href{mailto:zhivotovskiy@berkeley.edu}{zhivotovskiy@berkeley.edu}}
}
\date{\today}
\begin{document}

\maketitle

\begin{abstract}

In the problem of aggregation, the aim is to combine a given class of base predictors to achieve predictions nearly as accurate as the best one.
In this flexible framework, no assumption is made on the structure of the class or the nature of the target.
Aggregation has been studied in both sequential and statistical contexts.
Despite some important differences between the two problems, the classical results in both cases feature the same \say{global} complexity measure.
In this paper, we revisit and tighten classical results in the theory of aggregation in the statistical setting by replacing the global complexity with a smaller, \say{local} one.
Some of our proofs build on the PAC-Bayes localization technique introduced by Catoni.
Among other results, we prove localized versions of the classical bound for the exponential weights estimator due to Leung and Barron [Trans. Inf. Theory., 52(8):3396–3410, 2006] and deviation-optimal  bounds for the $Q$-aggregation estimator. These bounds improve over the results of Dai, Rigollet and Zhang [Ann. Statist. 40(3):1878-1905, 2012] for fixed design regression and the results of Lecu\'{e} and Rigollet [Ann. Statist. 42(1):211-224, 2014] for random design regression.
\end{abstract}

\section{Introduction}
\label{sec:introduction}
Aggregation\let\thefootnote\relax\footnotetext{Accepted for presentation at the Conference on Learning Theory (COLT) 2023.} is a fundamental task in statistical learning theory and sequential prediction.
It can be informally described as follows: given a family
of predictors, combine their predictions to produce forecasts almost as accurate as those of the best predictor in the provided class.
This basic problem can be instantiated in a variety of situations, and provides a flexible building block to construct accurate and theoretically sound procedures.
The aggregation problem has been explored both in the statistical setting \cite{nemirovski2000nonparametric,yang2000mixing,catoni2004statistical,tsybakov2003optimal,leung2006information,dalalyan2008aggregation,audibert2008deviation,lecue2009aggregation,lecue2014q-aggregation,bellec2018optimal,mendelson2019unrestricted}
and in the sequential setting (see, \eg \cite{hannan1957approximation,foster1991prediction,vovk1998mixability, haussler1998sequential, azoury2001relative,vovk2001competitive} and the classic survey~\cite{cesabianchi2006plg}).
It is also related to the classical \emph{model selection} problem in statistics; see, \eg the lecture notes~\cite{massart2007concentration} for an introduction to the modern non-asymptotic treatment of the subject.

In this work, we focus on statistical variants of the aggregation problem, primarily within the frameworks of \emph{statistical learning/random-design regression} and \emph{denoising/fixed-design regression}.
We aim to relate and contrast these statistical problems with their sequential counterparts, emphasizing suitable complexity measures associated with each problem setting.
Before highlighting the differences between the two variants of aggregation, we start by discussing some important links and commonalities between them.

\paragraph{Sequential and statistical aggregation.}

For concreteness and to emphasize the complexity aspect of the problem, we will concentrate on regression with the squared loss.
Henceforth, let $\X$ be an input space and $\{ f_\theta : \theta \in \Theta \}$ be a class of functions $\X \to [-1, 1]$, called base predictors or experts.
The goal is to approximate the performance of the best function in this class.
An important special case occurs when $\Theta$ is a finite but otherwise arbitrary set.
In the statistical context, such a setting is called \emph{model aggregation} \cite{nemirovski2000nonparametric,tsybakov2003optimal}, also known as \emph{expert aggregation} in the sequential setting.

In the sequential setting, we are given a sequence of observations $(x_1, y_1), \dots, (x_n, y_n) \in \X \times [-1, 1]$ revealed one at a time. The aim is to construct a sequence of predictors $\wh f_0, \dots, \wh f_{n-1} : \X \to [-1, 1]$ such that $\wh f_i$ depends only on the first $i$ observations and for any data sequence, the \emph{regret}
\begin{equation}
  \label{eq:def-regret}
  \frac{1}{n} \sum_{i=1}^n \big( \wh f_{i-1} (x_i) - y_i \big)^2 - \min_{\theta \in \Theta} \frac{1}{n} \sum_{i=1}^n \big( f_{\theta} (x_i) - y_i \big)^2
\end{equation}
is upper bounded by an appropriate function of $M$ and $n$.

In contrast, in the statistical learning framework, the input-output pairs $(X_i, Y_i)_{1 \leq i \leq n}$ are assumed to be $n$ \iid random variables with the same distribution $P$ on $\X \times [-1, 1]$ as the generic pair $(X,Y)$.
The quality of a regression function $f : \X \to \R$ is then measured by its \emph{risk} $R (f) = \E [ (f (X) - Y)^2 ]$.
The distribution $P$ is unknown, but the sample $(X_i, Y_i)_{1 \leq i \leq n}$ is available.
The aim is to produce a predictor $\wh f_n : \X \to \R$ depending on the random sample that achieves a small
\emph{excess risk}
$$R (\wh f_n ) - \min_{\theta \in \Theta} R(f_\theta),$$
either in expectation or with high probability.

The fact that the sequential problem deals with arbitrary (deterministic) data suggests that guarantees in this setting can be transported to the statistical setting.
This is indeed partly the case, as shown by a standard conversion procedure (see \cite{barron1987bayes,littlestone1989online2batch,yang2000mixing,catoni2004statistical,cesabianchi2004onlinebatch}): for any sequence of predictors $\wh f_{0}, \dots, \wh f_{n}$ (with $\wh f_i$  based on the observations $(X_j, Y_j)_{1 \leq j \leq i}$) and any reference function $f : \X \to \R$, we have
\begin{equation}
  \label{eq:online-batch}
  \frac{1}{n+1} \sum_{i=1}^{n+1} \E [R (\wh f_{i-1})] - R (f)
  = \E \bigg[ \frac{1}{n+1} \sum_{i=1}^{n+1}  (\wh f_{i-1} (X_i) - Y_i)^2 - \frac{1}{n+1} \sum_{i=1}^{n+1}  (f (X_i) - Y_i)^2 \bigg],
\end{equation}
which holds because $\wh f_{i-1}$ is independent of $(X_i,Y_i)$.
Hence, the empirical regret with respect to $f_0$ controls the average excess risk of the predictors $\wh f_i$ for $1 \leq i \leq n+1$.
This conversion is, however, only partial for two reasons.
First, the obtained guarantee is only on the \emph{expected} risk rather than its deviations; as shown by Audibert~\cite{audibert2008deviation}, there may be a real difference between the expected and high-probability behavior of the risk.
Second, the excess risk bound obtained this way features the complexity measure of the harder, sequential version of the problem. This leads to suboptimal bounds, as discussed below.

\paragraph{Limitation of selection rules and the need for aggregation.}

An important aspect of aggregation theory is that it allows us to treat rather generic problems  without imposing strong structural assumptions. For instance, the class of base predictors 
may have unfavorable geometry due to being non-convex, and likewise,
the target to be predicted is not assumed to lie in some favorable position relative to the class of predictors.
An interesting consequence of this lack of structure
is that \emph{selection/proper learning} rules, namely, rules that select an element of the class of base predictors, cannot achieve optimal prediction accuracy.
In particular, this eliminates the most natural procedure -- that of empirical risk minimization -- as a candidate for optimal aggregation.

The necessity of improper learning is a fundamental facet of the theory of aggregation,  shared by both the sequential and statistical versions of the problem. More precisely, no proper/selection rule (deterministic or randomized) can improve upon the suboptimal worst-case regret or excess risk rate of order $\sqrt{\log M / n}$.
This lower bound holds due to the possible non-convexity of the class and the arbitrary position of the target $Y$ with respect to it.
Roughly speaking, non-convexity can induce an erratic behavior of selection rules and prevents one from exploiting the curvature of the squared loss.
In contrast, when the base class of predictors is convex, curvature of the loss function can be exploited to obtain a faster rate scaling as $\log M / n$ for empirical risk minimization.
For relevant lower bounds in the statistical setting, we refer
to \cite[\S3.5]{catoni2004statistical}, \cite[\S20.2]{anthony2009neural} and \cite{juditsky2008mirror,audibert2008deviation}.
A corresponding lower bound in the sequential setup, which can be deduced from the statistical lower bound by \eqref{eq:online-batch}, can be found in \cite[p.~72]{cesabianchi2006plg}.

\paragraph{Aggregation by exponential weights.}

Perhaps the most important aggregation procedure, both in the sequential and statistical settings, is the method of \emph{exponential weights}.
This estimator is useful and nontrivial even in the case of model aggregation, but it is most naturally defined and analyzed in a more general context described below.

Let $\{ f_\theta : \theta \in \Theta \}$ denote the class of base predictors, where the parameter set $\Theta$ is now general and not necessarily finite.
As before, we make no special assumptions on the structure of the class, which may be non-convex.
A natural idea to artificially introduce some structure to the class is to ``convexify'' it.
Specifically, let $\probas (\Theta)$ denote the set of probability measures on $\Theta$.
This set possesses a convex structure and $\Theta$ naturally embeds into it by associating each $\theta \in \Theta$ to the Dirac mass $\delta_\theta$. In addition, each function $h : \Theta \to \R$ defines a linear function on $\probas (\Theta)$ by integration: $\rho \mapsto \innerp{\rho}{h} = \int_{\Theta} h (\theta) \rho (\di \theta)$.
Now, let $\pi$ be a probability measure on $\Theta$, called ``prior'' in analogy with Bayesian statistics; this distribution encodes the ``inductive bias'' of the procedure, and should generally put mass on ``simpler'' regions of the class.
The prior $\pi$ endows $\probas (\Theta)$ with a convex regularizer given by the \emph{relative entropy} or \emph{Kullback-Leibler divergence} $\rho \mapsto \kll{\rho}{\pi} = \int_\Theta \log (\frac{\di \rho}{\di \pi}) \di \rho$, with the convention that $\kll{\rho}{\pi} = + \infty$ when $\rho$ is not absolutely continuous with respect to $\pi$.
The \emph{exponential weights} rule is obtained by optimizing the linearized objective on $\probas (\Theta)$ with an entropic regularization. Specifically,
the exponential weights posterior $\pew_n \in \probas (\Theta)$ with prior $\pi$ and inverse temperature (learning rate) $\beta > 0$ is given by
\begin{equation}
  \label{eq:def-exp-weights-variational}
  \pew_n
  = \argmin_{\rho \in \probas (\Theta)} \big\{ \innerp{\rho}{ R_n} + \beta^{-1} \cdot \kll{\rho}{\pi}  \big\}
\end{equation}
where $ R_n (\theta) = n^{-1} \sum_{i=1}^n (f_\theta (X_i) - Y_i)^2$ denotes the empirical risk of the predictor $f_{\theta}$.
The solution to this optimization problem can be expressed explicitly using the Donsker-Varadhan variational formula: %
\begin{equation}
  \label{eq:def-exp-weights-expression}
  \frac{\di \pew_n}{\di \pi} (\theta)
  = \frac{\exp \{ - \beta R_n (\theta) \}}{\int_{\Theta} \exp \{ - \beta  R_n (\vartheta) \} \pi (\di \vartheta)}
  .
\end{equation}
Finally, each distribution $\rho$ gives rise to the ``averaged'' or ``mixture'' predictor $f_\rho (x) = \int_\Theta f_\theta (x) \rho (\di \theta)$; in particular, the exponential weights predictor is $\few_n = f_{\pew_n}$.

In the model aggregation context with $|\Theta| = M$, the exponential weights algorithm with uniform prior on $\Theta$ and inverse temperature $\beta_i = i/8$ for $\few_{i}$ has a regret~\eqref{eq:def-regret} bounded by $8 \log (M) / n$ \cite[pp.~45-46]{cesabianchi2006plg}.
By identity~\eqref{eq:online-batch} and convexity of the risk, this implies that the following averaged estimator, proposed in~\cite{barron1987bayes,catoni2004statistical,yang2000mixing} and known as \emph{progressive mixture} rule:
\begin{equation}
  \label{eq:def-progressive-mixture}
  \fprog_n
  = \frac{1}{n+1} \sum_{i=0}^{n} \few_i
\end{equation}
achieves the excess risk bound
$\E [ R (\fprog_n) ] - \min_{\theta \in \Theta} R (f_\theta) \leq 8 \log (M) / (n+1)$, which is known to be 
the optimal rate %
of aggregation \cite{tsybakov2003optimal}.
Remarkably, while working in the space of measures (the convex hull of the class)
improves the geometry of the class by ensuring convexity, the above bound features the complexity of the original class rather than that of its convex hull.
Indeed, the best achievable excess risk over the convex hull of $M$ functions is substantially larger than the $\log M / n$ rate for model aggregation~\cite{tsybakov2003optimal}.

In fact, the progressive mixture was the only known rate-optimal model aggregation procedure until the works of~\cite{audibert2008deviation,lecue2009aggregation}, which introduced procedures with optimal high-probability bounds (that the progressive mixture does not achieve).%

\paragraph{Global and local entropic complexities.}

While the exponential weights/progressive mixture estimators achieve the optimal rates of aggregation for finite classes, entropic penalization is well-defined in a broader context. The class of ``entropy-regularized'' procedures leads to a natural ``entropic'' complexity measure, which is different from the more traditional measures of complexity such as covering numbers, Rademacher complexity, or VC dimension \cite{vapnik1999nature,koltchinskii2011oracle,gyorfi2002nonparametric,wainwright2019high}.

Entropic complexities depend on the prior $\pi$. For any $\pi$, it follows from the  regret bound for exponential weights in the sequential setting  together with the identity~\eqref{eq:online-batch}, that if $\beta_i = i/c$ with $c \geq 8$ then
\begin{equation}
  \label{eq:averaged-exponential-bound}
  \frac{1}{n+1} \sum_{i=0}^n \E [ R (\few_i) ]
  \leq \inf_{\gamma \in \probas (\Theta)} \bigg\{ \langle \gamma , R\rangle + c\cdot\frac{\kll{\gamma}{\pi}}{n+1} \bigg\}
  .
\end{equation}
The quantity on the right-hand side plays an important role in the study of exponential weights and entropy-regularized aggregation.
We call this quantity the \emph{global entropic complexity} \cite{vovk2001competitive,zhang2006entropy,audibert2009fastrates}. It is defined as
\begin{equation}
  \label{eq:global-kl-complexity}
  \globalc{\beta}{\pi}
  = \inf_{\gamma  \in \probas (\Theta)} \Big\{ \innerp{\gamma }{R} + \beta^{-1} \cdot \kll{\gamma }{\pi} \Big\}
  = - \frac{1}{\beta} \log \bigg( \int_\Theta e^{-\beta R (\theta)} \pi (\di \theta) \bigg)
  .
\end{equation}
It is worth noting that the complexity $\globalc{\beta}{\pi}$ only depends on $\beta$ and on the distribution of $R (\theta)$ under $\theta \sim \pi$,
and not explicitly on the ``size'' or dimension of the class.
In addition, the more mass the prior puts on parameters $\theta$ with small risk, the smaller the complexity; this quantity, therefore, measures the quality of the prior $\pi$ for the actual risk $R$.

The global entropic complexity appears in classical results in aggregation, both in the sequential and statistical settings (and within the statistical framework, both for learning and denoising).
For instance, in the statistical learning setting, the bound~\eqref{eq:averaged-exponential-bound} together with convexity of the risk $R$ implies that the progressive mixture~\eqref{eq:def-progressive-mixture} satisfies the following bound: $\E [R (\fprog_n)] \leq \globalc{(n+1)/c}{\pi}$.

At the same time, sequential prediction seems genuinely harder than statistical estimation, for the following reason.
In sequential prediction, one must issue predictions at each time step $i=1, \dots, n$, having seen only the first $i-1$ observations, and the error is averaged over sample sizes.
Clearly, less information is available at early rounds, which should result in a lower precision.
In contrast, in statistical estimation, one has at hand a full sample of $n$ observations.
That the global complexity $\globalc{\beta}{\pi}$ bounds the hardness of sequential prediction suggests that it provides an overly pessimistic account of the hardness of the easier, statistical problem.
This suboptimality manifests itself in different ways, such as additional logarithmic factors,
inability to handle unbounded parameter domains, or it can provide misleading indication on the qualitative effect of some problem parameters.

The refinement of the global entropic complexity that we consider is the following \emph{local entropic complexity}: letting $\pi_{h} = e^h \pi / \langle \pi, e^h\rangle \in \probas (\Theta)$ for a function $h : \Theta \to \R$, 
\begin{equation}
  \label{eq:def-local-complexity}
  \localc{\beta}{\pi}
  = \frac{\int_{\R^d} R (\theta) e^{- \beta R (\theta)} \pi (\di \theta)}{\int_{\R^d} e^{- \beta R (\theta)} \pi (\di \theta)}
  = \innerp{\pi_{-\beta R}}{R}
  .
\end{equation}
The local complexity
is the average risk under the exponential weights posterior with respect to the \emph{true} (rather than empirical) risk. In analogy with thermodynamics, the global complexity corresponds to the ``free energy'', while the local complexity corresponds to the ``average energy''.
The link between both complexities is given by the relations
\begin{equation}
  \label{eq:link-local-global}
  \frac{\di}{\di \beta} \{ \beta\cdot\globalc{\beta}{\pi} \} = \localc{\beta}{\pi}
  \quad \text{\ie} \quad
  \globalc{\beta}{\pi}
  = \frac{1}{\beta} \int_0^{\beta} \localc{\eta}{\pi} \di \eta
  .
\end{equation}
In addition, by a standard property of log-Laplace transforms, $- \beta \cdot \globalc{\beta}{\pi}$ is convex in $\beta$, so that $\localc{\beta}{\pi}$ is decreasing in $\beta$.
Combining this with the second identity in~\eqref{eq:link-local-global} shows that $\localc{\beta}{\pi} \leq \globalc{\beta}{\pi}$.
Another essentially equivalent quantity is
\begin{equation*}
  \inf_{\gamma \in \probas (\Theta)} \Big\{ \innerp{\gamma}{R} + \beta^{-1} \cdot \kll{\gamma}{\pi_{-\beta R}} \Big\}
  = \globalc{\beta}{\pi_{-\beta R}}
  = \frac{1}{\beta} \int_{\beta}^{2 \beta} \localc{\eta}{\pi} \di \eta,
\end{equation*}
(as %
since $\pi_{-\beta R})_{-\eta R} = \pi_{-(\beta+\eta)R}$.
In particular, by monotonicity of $\localc{\eta}{\pi}$ we have 
\begin{equation}
\label{eq:localvsglobalbound}
\localc{2\beta}{\pi} \le \globalc{\beta}{\pi_{-\beta R}} \le \localc{\beta}{\pi}.
\end{equation}
Combining inequality~\eqref{eq:averaged-exponential-bound} with the relation~\eqref{eq:link-local-global} leads to the bound
\begin{equation}
  \label{eq:ineq-averages}
  \frac{1}{n+1} \sum_{i=0}^{n} \E [ R (\wh f_i) ]
  \leq \frac{1}{n+1} \int_0^{n+1} \localc{c \,\eta}{\pi} \, \di \eta
  \leq
  \frac{1}{n+1} \sum_{i=0}^{n} \localc{c \, i}{\pi}.
\end{equation}
In words, the averaged (over sample sizes $i = 0, \dots, n$) %
risk of the exponential weights estimator $\wh f_i$ (based on the first $i$ observations and with inverse temperature $\beta_i = c\, i$) is bounded by the averaged local complexity.
Given this, it is natural to aim for the following strengthening of~\eqref{eq:ineq-averages}: control the risk of exponential weights (or another aggregation procedure) with $n$ samples by the local complexity $\localc{c \, n}{\pi}$.
However, this inequality does not follow from~\eqref{eq:ineq-averages}; instead, the classical approach is to deduce (using convexity of $R$) the global bound $\E [R(\fprog_n)] \leq \globalc{c (n+1)}{\pi}$ for the averaged estimator $\fprog_n$.
One of the main contributions of this work is to show that the risk of suitable entropy-regularized aggregation methods can indeed be controlled by the local complexity $\localc{c \, n}{\pi}$, at the same level of generality as the global bounds.

\paragraph{Related work.}

Local priors and entropic complexities were first introduced by Catoni~\cite{catoni2004statistical} in the context of PAC-Bayesian bounds for classification, with a similar approach proposed by Zhang~\cite{zhang2006information}.
Alquier~\cite{alquier2008pac} also obtained local PAC-Bayesian bounds for regression.
However, previous works \cite{catoni2004statistical,zhang2006information,alquier2008pac,alquier2021user} either produced generalization error bounds at a \say{slow rate} or assumed the target was in a favorable position relative to the class (Bernstein assumption) rather than in an arbitrary position as in the general aggregation problem.
This distinguishes our results from those of prior works.

Another relevant reference is~\cite{bellec2018optimal}, where the problem of aggregation is considered in the specific case of tuning the regularization parameter for ridge regression.
In this case, the authors obtain guarantees with an excess risk term that does not depend on the number of possible parameters in the discrete grid, or on the range of possible parameters.
These improvements are related to those obtained from local bounds.
However, the approach from~\cite{bellec2018optimal} seems to rely on the specific structure of the considered class, in particular on the fact that it is comprised of ordered (affine) spectral smoothers.

Guarantees featuring the relative entropy as a complexity measure are sometimes referred to as ``PAC-Bayesian'' in the literature~\cite{mcallester1999some,catoni2007pacbayes,alquier2021user}.
Specifically, PAC-Bayesian inequalities, first used by McAllester~\cite{mcallester1999some,mcallester2003pac}, are obtained by combining the Chernoff method for establishing deviation inequalities with duality for the relative entropy.
We note that this methodology has also been applied to study
the generalization error (difference between empirical and population risks) of learning algorithms, which is distinct from the
excess risk studied in this paper.
We refer to a sample of works in this direction \cite{mcallester1999some,mcallester2003pac,catoni2007pacbayes,dziugaite2017nonvacuous,steinke2020reasoning,lugosi2023online} and to the recent surveys
\cite{alquier2021user,raginsky202110} for further references.

The present work belongs to the literature on aggregation in the statistical setting, which has been considered for both fixed and random design.
Guarantees in expectation for the exponential weights estimator were obtained in~\cite{barron1987bayes,yang2000mixing,catoni2004statistical,juditsky2008mirror} (with additional averaging) for random design, and in~\cite{george1986minimax,leung2006information,dalalyan2008aggregation} for fixed design.
Sharp bounds in deviation for model aggregation were first obtained in~\cite{audibert2008deviation,lecue2009aggregation}; closer to our work, the method of $Q$-aggregation was shown in~\cite{rigollet2012kullback,dai2012q-aggregation,lecue2014q-aggregation,bellec2018optimal} to achieve the worst-case optimal rate of aggregation in deviation.
We also mention that additional aggregation methods have been proposed in~\cite{wintenberger2017boa,vanderhoeven2022},
using a sequential
procedure featuring a second-order correction related to that used in $Q$-aggregation; a related approach has been considered in the context of online learning in~\cite{audibert2009fastrates,koolen2015second,vanErven2021MetaGrad}.
Finally, extensions to heavy-tailed situations have been considered in \cite{mendelson2017aggregation, mendelson2019unrestricted, mourtada2021robust}.

\subsection{Summary of main results}
\label{sec:summary-of-main-results}

The rest of the paper is split into two parts. First, in Section~\ref{sec:main-results}, we state our localized PAC-Bayesian bounds in three different statistical learning settings. Then, in Section~\ref{sec:applications}, we instantiate our abstract results to concrete problems.
Our main contributions can be summarized as follows:
\begin{itemize}
\item In the fixed design regression setting, we prove two localized bounds. Theorem~\ref{thm:localized-exp-weights-gaussian-model} provides a localized version of the classical
  risk bound for the exponential weights estimator due to Leung and Barron \cite{leung2006information}.
  Theorem~\ref{thm:localized-fixed-design-q-aggregation} proves an analogous bound for the $Q$-aggregation estimator, this time optimal in deviation.
\item Our fixed design local risk bound for the $Q$-aggregation estimator is extended to the random design setting in Theorem~\ref{thm:pac-bayes-q-aggregation}.
  A special case of this bound resolves a question raised by Lecué and Mendelson \cite[Question 1.2]{lecue2013optimality} on model aggregation guarantees that adapt to the hardness of the problem instead of always paying for the worst-case minimax rate.
\item Section~\ref{sec:gauss-priors} 
  deals with Gaussian process priors, making explicit some distinctions between local and global complexities and connecting the $Q$-aggregation and ridge regression estimators.
\item Section~\ref{sec:improperridge} is devoted to the analysis of improper ridge-type estimators within the context of random design regression. Although we utilize a leave-one-out type of analysis in this section, we are able to recover a complexity measure similar to those that appear in Section~\ref{sec:gauss-priors}.
\item Theorem~\ref{thm:exchangablepriors} in Section~\ref{sec:binaryclass}  provides a deviation-optimal localized bound for the $Q$-aggregation estimator in the transductive learning setup with almost exchangeable priors. This result improves over the results from \cite{rakhlin2017empirical} for Vapnik-Chervonenkis classes by accounting for their local structure in the resulting bounds. 
\end{itemize}
The proofs and further technical details are deferred to the appendix. 

\subsection{Notation}
\label{sec:notation}

This section summarizes the notation used throughout the paper. We let $\Theta$ and $\X$ be measurable spaces, and $f : \Theta \times \X \to \R$ be a measurable function.
For any $\theta \in \Theta$, let $f_\theta = f (\theta, \cdot) : \X \to \R$.
We denote by $\probas (\Theta)$ the space of all probability distributions on $\Theta$.
Henceforth, we leave measurability assumptions implicit to lighten the presentation, and adopt the convention that all introduced functions between measurable spaces are measurable. In order to distinguish between expectations over $\Theta$ (which come from the fact that we work on the probability simplex $\probas (\Theta)$) and randomness that comes from random samples $(X,Y)$ in $\X \times \R$, we denote expectations over $\Theta$ using integral and measure-theoretic notation, while we use probabilistic notation for random variables such as random samples.
Specifically, for $\mu$ a measure on $\Theta$ and $g : \Theta \to \R$, we denote by
$
  \langle \mu, g\rangle
  = \int_{\Theta} g(\theta) \mu (\di \theta)
$
the integral of $f$ with respect to the measure $\mu$, assuming it is well-defined.
For any distribution $\rho \in \probas (\Theta)$, and any function $f : \Theta \times \X \to \R$, the notation $\langle \rho, f \rangle$ averages over $\Theta$ and results in a function $f_\rho : \X \to \R$ defined for any $x \in \X$ by
$f_\rho (x)
  = \langle \rho, f (\cdot, x) \rangle
  = \int_\Theta f_\theta (x) \rho (\di \theta).$ We introduce the shorthand notation $R(\rho) = R(f_{\rho})$ and $\langle \rho, R_n \rangle = \int_{\Theta} R(f_{\theta})\rho(\di\theta).$
The empirical norm $\|\cdot\|_{n}^{2}$ is defined by
$\|f\|_{n}^{2} = \frac{1}{n}\sum_{i=1}^{n}f(X_{i})^{2}$. For any function $g : \X \to \R$, and any probability measure $\rho \in \probas(\Theta)$, and any function $f : \Theta \times \X \to \R$ we also introduce the following shorthand notation 
$
  \langle \rho, \|f - g\|_{n}^{2} \rangle = \int_{\Theta} \|f(\theta, \cdot) - g(\cdot)\|_{n}^{2}\rho(\di\theta)
$. 
{For any functions $f, g$, we write $f \lesssim g$ if there is an absolute constant $c > 0$ such that $f \le cg$.}

\section{Main results}
\label{sec:main-results}

We now state the main results of this paper: localized PAC-Bayesian
bounds for various statistical aggregation problems. In
Section~\ref{sec:fixed-design}, we treat the fixed design (or
denoising) setting, where the covariates are assumed to be deterministic.
The case of random design regression (statistical learning)  is then considered in 
Section~\ref{sec:random-design}.
We also mention that a third setting, namely
transductive learning, will be considered in Section~\ref{sec:transductive}.

While each of the learning formulations listed above admits a different
population risk functional $R$, its empirical counterpart $R_{n}$ remains the
same across the three problems. 
Given a sample $(X_{i}, Y_{i})_{i=1}^{n}$ of $n$ observations, the empirical risk $R_{n}(f)$ of a function $f : \mathcal{X} \to \R$ is defined by
\begin{equation}
  \label{eq:empirical-risk}
  R_{n}(f) = \frac{1}{n}\sum_{i=1}^{n}(f(X_{i}) - Y_{i})^{2}.
\end{equation}

Our results apply to two entropy-regularized aggregation procedures.
The first one is the \emph{exponential weights} estimator. Given a prior
distribution $\pi \in \mathcal{P}(\Theta)$ and an inverse temperature parameter
$\beta > 0$, it is defined by
\begin{equation}
  \label{eq:exp-weights-estimator-definition}
  \few_{n} = \langle \pew_{n}, f \rangle, \text{ where }
  \pew_{n}
  =
  \pew_{n}(\pi, \beta)
  = \argmin_{\rho \in \mathcal{P}(\Theta)}\left\{
    \langle \rho, R_{n} \rangle
    + \frac{1}{\beta} \kl(\rho, \pi)
  \right\}.
\end{equation}
The estimator $\few_{n}$ is analyzed in Section~\ref{sec:fixed-design} in the
classical Gaussian fixed design setting. The purpose of this analysis is to
demonstrate some challenges associated with PAC-Bayesian
localization in the simplest possible setting and also, to provide an
improvement over the classical non-localized bound of Leung and Barron \cite{leung2006information}.

Despite optimal in-expectation performance in the denoising setting,
the exponential weights estimator $\few_{n}$ is known to suffer from
sub-optimal tail behavior \cite{dai2012q-aggregation}.
In the fixed design setting, a known remedy is to replace the linear term
$\langle \rho, R_{n} \rangle$ in \eqref{eq:exp-weights-estimator-definition}
by the convex combination
$\alpha \langle \rho, R_{n} \rangle + (1-\alpha) R_{n}(\rho)$ for any $\alpha \in (0,1)$.
The intuition for this manipulation is as follows. The choice $\alpha = 1$
yields the exponential weights estimator, which as mentioned above is deviation
sub-optimal. While minimizing the objective with the term $\langle \rho, R_{n}
\rangle$ introduces inductive bias towards ``sparse'' probability measures,
yielding good complexity regularization for the model selection aggregation
problem, this regularizing effect is completely lost at the other extreme
$\alpha = 0$.
On the other hand, any $\alpha \in (0,1)$ is a good choice: the resulting
objective still benefits from the inductive bias of the linear term $
\alpha \langle \rho, R_{n}\rangle$, while the term $(1-\alpha)R_{n}(\alpha)$ allows us
to benefit from the curvature of the quadratic loss that is lost with the
choice $\alpha = 1$; this curvature of the quadratic loss is crucial to
regularize variance terms that render the estimator $\few$ deviation suboptimal
despite its in-expectation optimality for the denoising problem.
While any $\alpha \in (0,1)$ would work, in order to simplify our results
we define the \emph{$Q$-aggregation} estimator with the particular choice $\alpha
= 1/2$:
\begin{equation}
  \label{eq:q-aggregation-estimator-definition}
  \fq_{n} = \langle \pq_{n}, f \rangle, \text{ where }
  \pq_{n}
  =
  \pq_{n}(\pi, \beta)
  = \argmin_{\rho \in \mathcal{P}(\Theta)}\left\{
    \frac{1}{2}\langle \rho, R_{n} \rangle
    +
    \frac{1}{2}R_{n}(\rho)
    + \frac{1}{\beta} \kl(\rho, \pi)
  \right\}.
\end{equation}
This estimator was first proposed by Audibert \cite{audibert2004aggregated} in the context of convex aggregation (where one aims to compare to convex combinations of functions in the class).
It was first recognized as an optimal model aggregation procedure by Rigollet~\cite{rigollet2012kullback},
and variants thereof were further analyzed in
\cite{dai2012q-aggregation, lecue2014q-aggregation, bellec2018optimal}.
In particular, Dai, Rigollet and Zhang \cite{dai2012q-aggregation} obtain a global PAC-Bayes bound in
the fixed design setup.

\subsection{Fixed design}
\label{sec:fixed-design}

In this section, let $X_{1},\dots,X_{n} \in \mathcal{X}$
be arbitrary \emph{deterministic} covariate vectors and consider the model:
\begin{equation}
  \label{eq:fixed-design-model}
  Y_{i} = f^{\star}(X_{i}) + Z_{i},
\end{equation}
where $Y = (Y_{1}, \dots, Y_{n}) \in \R^{d}$ is the vector of response
variables, $Z = (Z_{i}, \dots, Z_{n})$ is a random vector of zero-mean noise,
and $f^{\star}$ is the regression function.
Observe that the only randomness in the model \eqref{eq:fixed-design-model} comes from
the noise vector $Z$.
Define the population risk functional $R$ by
\begin{equation}
  \label{eq:fixed-design-R}
  R(f) = \mathbf{E}_{Y}\left[
    \frac{1}{n}\sum_{i=1}^{n} (f(X_{i}) - Y_{i})^{2}
  \right]
  = \|f - f^{\star}\|_{n}^{2} + \frac{1}{n}\sum_{i=1}^{n}\mathrm{Var}(Z_{i}),
\end{equation}
while its empirical counterpart $R_{n}$ is defined in
\eqref{eq:empirical-risk}.
With a slight abuse of notation, in what follows we identify $f^{\star}$ with
the vector $(f^\star (X_i))_{1 \leq i \leq n} \in \R^n$.

This section contains two results. The following result, proved in
Section~\ref{sec:proof-fixed-design-exp-weights}, provides the first localized
in-expectation bound for the exponential weights estimator under the assumption
that the noise vector $Z$ is standard normal. In particular, this yields an
improvement over the classical \emph{global} PAC-Bayesian bound obtained in the same
setup by Leung and Barron~\cite{leung2006information}.
The assumption that $Z$ is Gaussian, in particular, allows us to leverage Stein's Unbiased Risk Estimator (SURE) in the analysis of $\few$. Such an approach in the context of aggregation via
exponential weights was first considered by George~\cite{george1986minimax}, and
later systematically applied by Leung and Barron~\cite{leung2006information}, Dalalyan and Tsybakov~\cite{dalalyan2008aggregation} and Dalalyan and Salmon~\cite{dalalyan2012sharp}, among others.
For a treatment of more general noise distributions, we refer to~\cite{dalalyan2022simple} and references therein.

\begin{theorem}
  \label{thm:localized-exp-weights-gaussian-model}
  Consider the fixed design setting described above
  and assume that $(Z_{i})_{i=1}^{n}$ are \iid centered
  Gaussian random variables with variance $\sigma^{2}$.
  Let $\{ f_\theta : \theta \in \Theta\}$ be a family of vectors in $\R^n$ and $\pi \in \probas (\Theta)$ a fixed probability distribution.
  Then, the exponential weights estimator $\few_{n} = \few_{n}(\pi, \beta)$ defined in~\eqref{eq:exp-weights-estimator-definition}, with prior $\pi$ and inverse temperature $\beta = n/(8\sigma^{2})$, satisfies
  \begin{equation}
    \label{eq:bound-exp-weights-fixed}
    \E \Big[
      \|\few_{n} - f^{\star}\|_{n}^{2}
    \Big]
    \leq
    \int_{\Theta} \norm{f_\theta - f^\star}_n^2 \, \pi_{- n R / (16 \sigma^2)} (\di \theta)
    .
  \end{equation}
\end{theorem}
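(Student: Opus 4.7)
My plan is to combine Stein's lemma (available since the noise is Gaussian) with a PAC-Bayes localization trick: analyze $\pew_n$ with the localized prior $\pi' := \pi_{-\beta' \wt{R}}$ as the reference measure, where $\wt{R}(\theta) := \|f_\theta - f^\star\|_n^2$ and $\beta' := \beta/2 = n/(16\sigma^2)$. (Note $\pi_{-\beta' \wt{R}} = \pi_{-nR/(16\sigma^2)}$ since $R$ and $\wt{R}$ differ by a constant that normalizes out.) Writing $Z := Y - f^\star \in \R^n$ and $g_\theta := (2\beta/n)\langle f_\theta - f^\star, Z\rangle$, the expansion $R_n(\theta) = \wt{R}(\theta) - (2/n)\langle f_\theta - f^\star, Z\rangle + \|Z\|_n^2$ together with $\beta = 2\beta'$ lets me check directly that $e^{-\beta R_n(\theta)}\pi(\theta) \propto e^{-\beta' F(\theta)}\pi'(\theta)$ with $F(\theta) := \wt{R}(\theta) - g_\theta/\beta'$. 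Hence $\pew_n$ is also the minimizer of $\rho \mapsto \langle \rho, F\rangle + (1/\beta')\kll{\rho}{\pi'}$, and the Donsker-Varadhan variational characterization, applied at the test distribution $\rho = \pi'$ (which kills the KL term on the right) and then integrated over $Z$ (which kills $\E\langle \pi', g\rangle = 0$, since $\pi'$ is deterministic and $g_\theta$ is centered), would yield
\[
  \E\langle \pew_n, \wt{R}\rangle - (1/\beta')\E\langle \pew_n, g\rangle + (1/\beta')\E\kll{\pew_n}{\pi'} \le L,
\]
where $L := \langle \pi', \wt{R}\rangle$ is exactly the right-hand side of the theorem.

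Next I would evaluate $\E\langle \pew_n, g\rangle$ using Stein's identity. Since $\langle \pew_n, g\rangle = (2\beta/n)\langle \few_n - f^\star, Z\rangle$, Stein reduces the computation to the divergence of $\few_n$ in $Y$. Differentiating $\pew_n$ with respect to $Y_i$ gives $\partial \pew_n(\theta)/\partial Y_i = (2\beta/n)\pew_n(\theta)(f_\theta(X_i) - \few_n(X_i))$, from which $\sum_{i=1}^n \partial \few_n(X_i)/\partial Y_i = 2\beta \langle \pew_n, \|f - \few_n\|_n^2\rangle$. The choice $\beta = n/(8\sigma^2)$ is calibrated precisely so that the resulting factor $4\beta^2 \sigma^2/n$ simplifies to $\beta'$; combined with the variance decomposition $\langle \pew_n, \|f - \few_n\|_n^2\rangle = \langle \pew_n, \wt{R}\rangle - \|\few_n - f^\star\|_n^2$ this would give the clean identity $(1/\beta')\E\langle \pew_n, g\rangle = \E\langle \pew_n, \wt{R}\rangle - \E\|\few_n - f^\star\|_n^2$.

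Substituting this into the variational bound, the two $\E\langle \pew_n, \wt{R}\rangle$ contributions cancel exactly and the inequality collapses to
\[
  \E\|\few_n - f^\star\|_n^2 + (1/\beta')\E\kll{\pew_n}{\pi'} \le L,
\]
which gives the theorem after discarding the nonnegative KL term. The main obstacle is the very first step: with the original prior $\pi$ as reference in Donsker-Varadhan and the crude convexity bound $R_n(\few_n) \le \langle \pew_n, R_n\rangle$, one only recovers the global complexity bound of Leung and Barron. Localizing by $\pi'$, combined with the specific calibration $\beta = 2\beta'$, is precisely what makes Stein's identity produce an exact cancellation rather than a lossy inequality, upgrading the global bound to the local one.
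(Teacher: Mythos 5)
Your proof is correct and rests on the same ingredients as the paper's: Stein's identity (available by Gaussianity), the variational optimality of the exponential weights posterior, and localization of the prior, with the calibration $\beta = n/(8\sigma^2)$ doing identical work in both arguments. The difference is in organization. The paper computes the SURE estimator $\frac{1}{n}\mathrm{SURE} = \langle \pew_n, R_n\rangle + (4\beta\sigma^2/n - 1)V_n(\pew_n) - \sigma^2$, keeps the now-negative variance-correction term, invokes optimality of $\pew_n$ against the \emph{original} prior $\pi$, and only afterwards inserts the localized prior via Lemma~\ref{lemma:pb-localization} and concludes with \eqref{eq:localvsglobalbound}. You instead expand $R_n(\theta) = \wt R(\theta) - (2/n)\langle f_\theta - f^\star, Z\rangle + \|Z\|_n^2$ at the outset to recognize $\pew_n$ as the Gibbs posterior for the shifted potential $F = \wt R - g/\beta'$ relative to the localized reference $\pi' = \pi_{-\beta R/2}$; testing the variational inequality against $\rho = \pi'$ then produces the local complexity $\langle\pi', \wt R\rangle$ directly, and Stein's identity applied to $\E\langle\pew_n, g\rangle$ (rather than packaged as SURE) closes the argument via the same exact cancellation of the $\E\langle\pew_n, \wt R\rangle$ terms. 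In the paper's notation your approach amounts to choosing $\gamma = \pi_{-\beta R/2}$ in the penultimate inequality rather than taking an infimum and appealing to \eqref{eq:localvsglobalbound}; both routes are sound, and your reparametrization makes the localization step feel more structural and avoids the post-hoc lemma.
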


Equivalently and with the notation~\eqref{eq:def-local-complexity} from the introduction, the bound~\eqref{eq:bound-exp-weights-fixed} writes
  \begin{equation*}
    \E \big[ R (\few_n) \big]
    \leq \localc{n/(16 \sigma^2)}{\pi}.
  \end{equation*}

As already mentioned at the beginning of Section~\ref{sec:main-results}, the
exponential weights estimator is deviation suboptimal. On the other hand,
it was shown in~\cite{dai2012q-aggregation} showed that the $Q$-aggregation estimator satisfies
a \emph{global} exponential-tail bound in the fixed design setting with
sub-Gaussian noise; we say that a centered real random variable $Z$ is
$\sigma^{2}$-sub-Gaussian if for any $\lambda \in \R$ we have
$\E \exp(\lambda Z) \leq \exp(\lambda^{2}\sigma^{2}/2)$.
The following result shows that in fact, the $Q$-aggregation
estimator satisfied a deviation optimal \emph{local} bound. See
Appendix~\ref{sec:proof-q-aggregation-fixed-design} for the proof.

\begin{theorem}
  \label{thm:localized-fixed-design-q-aggregation}
  Consider the fixed design setting
  and assume that the noise random variables $Z_{i}$ are \iid, zero-mean and
  $\sigma^{2}$-sub-Gaussian.
  Let $\{ f_\theta : \theta \in \Theta\}$ be a family of vectors in $\R^n$ and $\pi \in \probas (\Theta)$ a fixed probability distribution.
  Let $\fq_{n} = \fq_{n}(\pi, \beta)$ denote the $Q$-aggregation estimator defined in~\eqref{eq:q-aggregation-estimator-definition}, with prior $\pi$ and inverse temperature $\beta = n/(12\sigma^{2})$.
  Then, for any $\delta \in (0,1)$, with probability at least $1-\delta$
  it holds that
  \begin{align}
    \|\fq_{n} - f^{\star}\|_{n}^{2}
    &\leq
    \int_{\Theta} \norm{f_\theta - f^\star}_n^2 \, \pi_{- n R/(36 \sigma^2)} (\di \theta)
      + \frac{18\sigma^{2} \log(1/\delta)}{n}
    .
  \end{align}
\end{theorem}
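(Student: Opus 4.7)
The plan combines three ingredients: a strong-convexity-enhanced optimality inequality for $\pq_n$, a Catoni-style change of reference prior from $\pi$ to the deterministic localized prior $\pi_{-\alpha L}$ with $L(\theta) := \|f_\theta - f^\star\|_n^2$ and $\alpha = \beta/3 = n/(36\sigma^2)$, and a PAC-Bayes log-Laplace bound for the residual noise. The prior $\pi_{-\alpha L}$ is admissible as a reference in PAC-Bayes since in the fixed-design setting $L$ depends only on the deterministic design $(X_i)_{i=1}^n$ and on $f^\star$, not on the random noise $Z$.

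\emph{Strong-convexity inequality.} Writing $\Phi(\rho) = \tfrac12\langle\rho, R_n\rangle + \tfrac12 R_n(\rho) + \beta^{-1}\kl(\rho, \pi)$, the Bregman divergence of $\Phi$ at its minimizer $\pq_n$ is $\tfrac12\|f_\rho - \fq_n\|_n^2 + \beta^{-1}\kl(\rho, \pq_n)$: the first piece comes from the quadratic structure $R_n(\rho) = \|f_\rho - Y\|_n^2$ as a function of $f_\rho$, the second from the $1$-strong convexity of KL. First-order optimality on the simplex thus gives $\Phi(\pq_n) + \tfrac12\|f_\rho - \fq_n\|_n^2 + \beta^{-1}\kl(\rho, \pq_n) \leq \Phi(\rho)$ for every $\rho$, which I will apply with $\rho = \pi_{-\alpha L}$.

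\emph{Reduction to a noise cross-term.} Using $\tfrac12\langle\rho, R_n\rangle + \tfrac12 R_n(\rho) = R_n(\rho) + \tfrac12\langle\rho, \|f - f_\rho\|_n^2\rangle$ together with $R_n(f) - R_n(f^\star) = \|f - f^\star\|_n^2 - 2\langle Z, f - f^\star\rangle_n$, and rewriting $\kl(\sigma, \pi) = \kl(\sigma, \pi_{-\alpha L}) - \alpha\langle\sigma, L\rangle - \log\langle\pi, e^{-\alpha L}\rangle$ for both $\sigma = \pq_n$ and $\sigma = \pi_{-\alpha L}$, the $-\beta^{-1}\log\langle\pi, e^{-\alpha L}\rangle$ contributions cancel on both sides, leaving only the cross-term $2\langle Z, \fq_n - f_{\pi_{-\alpha L}}\rangle_n$ as the noise-dependent quantity on the right. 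Since $2\langle Z, f_\theta - f_{\pi_{-\alpha L}}\rangle_n$ is sub-Gaussian in $Z$ with variance proxy $4\sigma^2\|f_\theta - f_{\pi_{-\alpha L}}\|_n^2/n$, the log-Laplace PAC-Bayes inequality with prior $\pi_{-\alpha L}$ and posterior $\pq_n$ yields, with probability at least $1-\delta$,
\[
  2\langle Z, \fq_n - f_{\pi_{-\alpha L}}\rangle_n \leq \tfrac{2\lambda\sigma^2}{n}\langle\pq_n, \|f - f_{\pi_{-\alpha L}}\|_n^2\rangle + \tfrac{\kl(\pq_n, \pi_{-\alpha L})}{\lambda} + \tfrac{\log(1/\delta)}{\lambda}.
\]
The $\pq_n$-variance identity $\langle\pq_n, \|f - f_{\pi_{-\alpha L}}\|_n^2\rangle = \langle\pq_n, L\rangle - \|\fq_n - f^\star\|_n^2 + \|\fq_n - f_{\pi_{-\alpha L}}\|_n^2$ then expresses the PAC-Bayes variance in exactly the quantities that already appear in the strong-convexity inequality, so that all of them consolidate on the LHS.

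\emph{Constant tuning and main obstacle.} With the choices $\alpha = \beta/3$ and $\lambda = \beta$ (so $\mu := \lambda\sigma^2/n = 1/12$), collecting everything on the LHS produces coefficients $\tfrac12 + 2\mu = \tfrac23$ on $\|\fq_n - f^\star\|_n^2$, $\tfrac12 - 2\mu = \tfrac13 \geq 0$ on $\|\fq_n - f_{\pi_{-\alpha L}}\|_n^2$, $\tfrac12 - \alpha/\beta - 2\mu = 0$ on $\langle\pq_n, L\rangle$, $\beta^{-1} - \lambda^{-1} = 0$ on $\kl(\pq_n, \pi_{-\alpha L})$, and $\beta^{-1} \geq 0$ on $\kl(\pi_{-\alpha L}, \pq_n)$. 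Applying Jensen on the RHS via $\|f_{\pi_{-\alpha L}} - f^\star\|_n^2 \leq \langle\pi_{-\alpha L}, L\rangle$ bounds it by $\tfrac23\langle\pi_{-\alpha L}, L\rangle + 12\sigma^2\log(1/\delta)/n$. Dropping the non-negative LHS terms and dividing by $2/3$ produces $\|\fq_n - f^\star\|_n^2 \leq \langle\pi_{-\alpha L}, L\rangle + 18\sigma^2\log(1/\delta)/n$, matching the claim. The main obstacle is the strong-convexity step: using only the weak optimality $\Phi(\pq_n) \leq \Phi(\rho)$ leaves coefficient $1/2$ rather than $2/3$ on $\|\fq_n - f^\star\|_n^2$ and degrades the final bound to $2\langle\pi_{-\alpha L}, L\rangle + 36\sigma^2\log(1/\delta)/n$; extracting the Bregman term $\tfrac12\|f_\rho - \fq_n\|_n^2$ from the quadratic part $\tfrac12 R_n(\rho)$ is exactly what calibrates the constants so that $\alpha = \beta/3$, $\lambda = \beta$, and the noise coefficient $18$ all come out simultaneously.
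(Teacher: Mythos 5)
Your proof is correct and uses the same essential ingredients as the paper's argument: the curvature of the $Q$-aggregation objective (your Bregman inequality, the paper's Lemma~\ref{lemma:q-aggregation-quadratic-loss-negative-term}), a Catoni-style change of prior to the deterministic localized prior $\pi_{-\beta R/3}$ via Lemma~\ref{lemma:pb-localization} (in the fixed-design setting, $\|f_\theta - f^\star\|_n^2$ and $R(f_\theta)$ differ only by the additive constant $\sigma^2$, so $\pi_{-\alpha L}=\pi_{-\alpha R}$ and is non-random), and the Donsker--Varadhan/PAC-Bayes duality combined with sub-Gaussianity of the noise. Your bookkeeping checks out: with $\alpha=\beta/3$, $\lambda=\beta$, and $\mu=\lambda\sigma^2/n=1/12$, the coefficients on $\langle\pq_n,L\rangle$ and $\kl(\pq_n,\pi_{-\alpha L})$ vanish exactly, and after dropping the nonnegative remainders $\frac13\|\fq_n - f_{\pi_{-\alpha L}}\|_n^2$ and $\beta^{-1}\kl(\pi_{-\alpha L},\pq_n)$ you land on $\frac23\|\fq_n-f^\star\|_n^2\le\frac23\langle\pi_{-\alpha L},L\rangle+\beta^{-1}\log(1/\delta)$, which rescales to the stated bound.

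The only genuine difference from the paper is a streamlining in the choice of comparison distribution. The paper takes $\gamma$ to be the minimizer of the localized functional $\gamma'\mapsto\langle\gamma',L\rangle+\frac{3}{2\beta}\kl(\gamma',\pi_{-\beta R/3})$ (which, as one can check, equals $\pi_{-\beta R}$), arrives at a bound with this infimum on the right-hand side, and only at the end invokes~\eqref{eq:localvsglobalbound} to pass to $\localc{\beta/3}{\pi}$. You instead plug $\pi_{-\beta R/3}$ itself into the strong-convexity inequality and use it simultaneously as the PAC-Bayes reference, which yields $\langle\pi_{-\beta R/3},L\rangle=\localc{\beta/3}{\pi}-\sigma^2$ directly without the detour through~\eqref{eq:localvsglobalbound}. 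You also extract the extra Bregman term $\beta^{-1}\kl(\rho,\pq_n)$, which the paper's Lemma~\ref{lemma:q-aggregation-quadratic-loss-negative-term} does not; this is harmless since you then drop it, so in effect you are using the same inequality. One small imprecision: the Bregman divergence of $\kl(\cdot,\pi)$ at $\pq_n$ being $\kl(\cdot,\pq_n)$ is the generalized Pythagorean identity for relative entropy, not a consequence of ``$1$-strong convexity of KL'' (which via Pinsker would give the weaker $\frac12\|\cdot\|_1^2$); the step you actually use is correct, just mislabeled.
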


\subsection{Random design}
\label{sec:random-design}

We now to to the \emph{random design} setting, where we observe $n$ input-output pairs $(X_{i},
Y_{i})$, $i=1,\dots,n$, each drawn independently from some distribution $P$.
Unlike in the fixed design setting considered in Section~\ref{sec:fixed-design}, we work under boundedness assumptions on both the response variable $Y$ and the functions in the class.
Specifically, we assume in this section that, for some constant $b > 0$, one has $|Y| \leq b$ almost surely, and for every $\theta \in \Theta$,
\begin{equation}
  \label{eq:boundedassumption}
  \norm{f_\theta}_\infty
  = \sup_{x \in \mathcal{X}} |f_{\theta} (x)|
  \leq b.
\end{equation}
When working with arbitrary classes of functions, such boundedness conditions on the response variable $Y$ and the reference functions are natural and have been considered by many authors; see, e.g., \cite{tsybakov2003optimal, audibert2008deviation, lecue2014q-aggregation}.
Finally, the population risk functional $R$ is defined by
$
  R(f) = \E_{(X,Y) \sim P}\left[
    (f(X) - Y)^2
  \right],
$
while its empirical version $R_{n}$ is defined in \eqref{eq:empirical-risk}.
The following result in proved in Appendix~\ref{sec:proof-random-design-localization}.

\begin{theorem}
  \label{thm:pac-bayes-q-aggregation}
  Consider the random design setup described above.
  Let $\pi \in \probas (\Theta)$ be any fixed probability distribution
  and let $\beta = c_1 n/ b^{2}$ be the inverse temperature parameter.
  Let $\fq_{n} = \fq_{n}(\pi, \beta)$ denote the $Q$-aggregation estimator defined in
  \eqref{eq:q-aggregation-estimator-definition}.
  Then, for any $\delta \in (0,1)$, with probability at least $1-\delta$
  it holds that
  \begin{equation}
      R(\fq_{n})
      \leq
      \int_{\Theta} R (f_\theta) \, \pi_{- c_1 n R/ (3 b^2)} (\di \theta)
      + \frac{c_2 b^2 \log(3/\delta)}{n},
  \end{equation}
  where $c_1, c_2 > 0$ are universal constants.
\end{theorem}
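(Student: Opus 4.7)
The proof follows the blueprint of the fixed-design $Q$-aggregation analysis behind Theorem~\ref{thm:localized-fixed-design-q-aggregation}, with PAC-Bayesian Bernstein-type concentration replacing the sub-Gaussian deviation argument to handle the bounded random-design losses. I would start from the variational optimality of $\pq_n$, applied to the data-independent localized competitor $\nu := \pi_{-\lambda R}$ with $\lambda = \beta/3 = c_1 n/(3b^2)$, which is exactly the distribution appearing in the target bound. Optimality reads
\begin{equation*}
  \tfrac{1}{2}\langle\pq_n, R_n\rangle + \tfrac{1}{2}R_n(\pq_n) + \tfrac{1}{\beta}\kl(\pq_n, \pi) \leq \tfrac{1}{2}\langle\nu, R_n\rangle + \tfrac{1}{2}R_n(\nu) + \tfrac{1}{\beta}\kl(\nu, \pi),
\end{equation*}
and the KL chain rule $\kl(\rho, \pi) = \kl(\rho, \nu) - \lambda\langle\rho, R\rangle - \log\int e^{-\lambda R}\di\pi$, applied to both $\rho \in \{\pq_n, \nu\}$, cancels $\pi$ and exposes the critical localization slack $\tfrac{1}{3}(\langle\nu, R\rangle - \langle\pq_n, R\rangle)$.

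The concentration step converts empirical into population quantities on both sides. For the data-independent $\nu$, Bernstein's inequality applied to the bounded summands $\tfrac{1}{2}\int (f_\theta(X_i) - Y_i)^2 \nu(\di\theta) + \tfrac{1}{2}(f_\nu(X_i) - Y_i)^2 \in [0, 4b^2]$ gives $\tfrac{1}{2}\langle\nu, R_n\rangle + \tfrac{1}{2}R_n(\nu) \leq \tfrac{1}{2}\langle\nu, R\rangle + \tfrac{1}{2}R(\nu) + O(b^2\log(1/\delta)/n)$ with probability $1 - \delta/3$. For the data-dependent $\pq_n$, the natural tool is a PAC-Bayes Bernstein bound of the form: for the fixed $\nu$ and every posterior $\rho$, with probability $1-\delta/3$,
\begin{equation*}
  \tfrac{1}{2}\langle \rho, R - R_n\rangle + \tfrac{1}{2}\bigl[R(\rho) - R_n(\rho)\bigr] \lesssim \tfrac{1}{\beta}\bigl[\kl(\rho, \nu) + \log(1/\delta)\bigr] + c_1 \langle\rho, R\rangle,
\end{equation*}
where the $c_1\langle\rho, R\rangle$ correction encodes the Bernstein variance bound $\mathrm{Var}[(f_\theta(X) - Y)^2] \lesssim b^2 R(f_\theta)$ together with the choice $\beta = c_1 n/b^2$. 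Crucially, the $\tfrac{1}{\beta}\kl(\pq_n, \nu)$ produced here matches exactly the $\tfrac{1}{\beta}\kl(\pq_n, \nu)$ from the KL chain rule, so the two cancel.

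Assembling the three inequalities and using the identity $\tfrac{1}{2}\langle \rho, R\rangle + \tfrac{1}{2} R(\rho) = R(f_\rho) + \tfrac{1}{2}\langle\rho, \|f - f_\rho\|_{L^2(P_X)}^2\rangle$ on both sides, together with $R(\nu) \leq \langle\nu, R\rangle$ and $\langle\pq_n, R\rangle \geq R(\fq_n)$, rearranges to an inequality of the shape $(1 - O(c_1)) R(\fq_n) \leq \langle\nu, R\rangle + O(b^2 \log(1/\delta)/n)$, from which the theorem follows for a small enough universal constant $c_1$ together with a union bound and the rescaling $\delta \mapsto \delta/3$ to obtain $\log(3/\delta)$. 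The main obstacle is this absorption step: one must carefully track constants so that the $c_1 \langle\pq_n, R\rangle$ variance correction from the PAC-Bayes Bernstein inequality stays strictly dominated by the $\tfrac{1}{3}\langle\pq_n, R\rangle$ slack produced by the localization. The empirical penalty $\tfrac{1}{2}\langle\rho, \|f - f_\rho\|_n^2\rangle$ built into the $Q$-aggregation functional---the object that distinguishes it from plain exponential weights and supplies the curvature needed for deviation-optimality---is precisely what makes the PAC-Bayes Bernstein step go through at the correct variance rate.
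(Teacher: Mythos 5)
Your overall architecture---compete against the localized competitor $\nu = \pi_{-\beta R/3}$, use the KL chain rule (the paper's Lemma~\ref{lemma:pb-localization}) to cancel $\pi$ and expose the slack $\tfrac{1}{3}\langle\nu - \pq_n, R\rangle$, then concentrate---matches the skeleton of the paper's proof. The observation that the $\tfrac{1}{\beta}\kl(\pq_n,\nu)$ from the chain rule should cancel the one produced by concentration is also the right instinct. However, there are two genuine gaps.

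The central problem is the asserted \emph{PAC-Bayes Bernstein bound}
\begin{equation*}
\tfrac{1}{2}\langle\rho, R - R_n\rangle + \tfrac{1}{2}\bigl[R(\rho) - R_n(\rho)\bigr]
\lesssim \tfrac{1}{\beta}\bigl[\kl(\rho,\nu) + \log(1/\delta)\bigr] + c_1\langle\rho, R\rangle .
\end{equation*}
The second term on the left, $R(\rho) - R_n(\rho) = (P - P_n)\ell_{f_\rho}$, is \emph{not} a linear functional of $\rho$, since $\ell_{f_\rho}(x,y) = (\langle\rho, f(\cdot, x)\rangle - y)^2$ depends quadratically on $\rho$. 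The PAC-Bayes Bernstein inequalities in the literature, and the Donsker--Varadhan duality \eqref{eq:pacbayes} they rest on, apply to quantities of the form $\langle\rho, h\rangle$ for a fixed $h : \Theta \to \R$. Your inequality as written is not a standard result, and you give no argument for it. The known route to linearizing $(P-P_n)\ell_{f_\rho}$ is symmetrization followed by Talagrand contraction (turning $\ell_{f_\rho}$ into a multiple of $f_\rho = \langle\rho, f\rangle$, which is linear), but this route produces, after Hoeffding, a term proportional to $b^2\beta^2\|f_J - f_\nu\|_n^2 / n$ in the exponent, and one must pay for it with a negative linear term of the form $-c\,\langle\rho, \|f - f_\nu\|_n^2\rangle$ (an \emph{empirical} squared norm). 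Your proposed correction $c_1\langle\rho, R\rangle$ is a \emph{population} quantity centred at $f^\star$, not at $f_\nu$ empirically, so it does not cancel the Hoeffding fluctuation. The paper obtains the needed empirical negative term from the strong convexity of $Q_n$ (Lemma~\ref{lemma:q-aggregation-quadratic-loss-negative-term}), which upgrades $Q_n(\pq_n) \leq Q_n(\gamma)$ to $Q_n(\pq_n) \leq Q_n(\gamma) - \tfrac{1}{2}\|f_{\pq_n} - f_\gamma\|_n^2$, and then uses the variance identity to convert $-\tfrac{1}{2}V_n(\pq_n) - \tfrac{1}{2}\|f_{\pq_n} - f_\gamma\|_n^2$ into the linear functional $-\tfrac{1}{2}\langle\pq_n, \|f - f_\gamma\|_n^2\rangle$. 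You only invoke first-order optimality, discarding exactly the term that makes the concentration step close.

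A secondary issue is the absorption step. Tracking your constants: the chain rule supplies a slack $\tfrac{1}{3}\langle\pq_n, R\rangle$, and the variance correction is $c_1\langle\pq_n, R\rangle$. After assembling and dropping nonnegative $V$-terms, the coefficient of $R(\pq_n)$ on the left becomes $\tfrac{2}{3} - c_1$, while the coefficient of $\langle\nu, R\rangle$ on the right is $\tfrac{2}{3}$; dividing leaves the prefactor $\tfrac{2/3}{2/3 - c_1} > 1$ on $\langle\nu, R\rangle$. The theorem statement asks for prefactor exactly $1$, which the paper achieves because its negative curvature term, not the localization slack, is what pays for the Hoeffding fluctuation, leaving the coefficient of $R(\pq_n)$ untouched at $\tfrac{2}{3}$. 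So even if your PAC-Bayes Bernstein bound were granted, you would still need to rebalance the constants to avoid the multiplicative inflation of the leading term.
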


Finally, it is important to note that the estimator $\fq_{n}$, within the framework of model aggregation involving random design, has not been previously analyzed. The question of analyzing the excess risk associated with this particular estimator was posed in \cite{mehta2017expconcave}.

\section{Applications}
\label{sec:applications}

In this section, we instantiate the local bounds in several different contexts.

\subsection{Model aggregation}
\label{sec:finite-aggregation}

As discussed in the introduction, a problem where $\Theta = \{\theta_{1}, \dots, \theta_{M}\}$ is a finite set is called \emph{model aggregation}. While the results we are about to discuss are applicable in all three learning setups considered in Section~\ref{sec:main-results}, for the sake of concreteness, let us consider the random design setup described in Section~\ref{sec:random-design}. In particular, the response variable $Y$ is almost surely contained in $[-b, b]$ and the reference class of functions satisfies the boundedness condition \eqref{eq:boundedassumption}. %

Let $\widehat{f} = \widehat{f}(S_{n})$ denote the output of any statistical estimator, where $S_{n} = (X_{i}, Y_{i})_{i=1}^{n}$ is the observed data sample. In the model aggregation problem, we wish to obtain  bounds on 
\[
  R(\widehat{f}) - \min_{\theta \in \Theta} R(f_{\theta}).
\]
The following minimax lower bound is well-known \cite{tsybakov2003optimal}:
\begin{equation}
    \label{eq:aggregation-minimax-lowerbound}
    \inf_{\widehat{f}} \sup_{P, |\Theta|=M} \E R(\widehat{f}) - \min_{\theta \in \Theta} R(f_{\theta})
     \geq c\frac{b^2 \log M}{n},
\end{equation}
  where $c >0$ is a universal constant, the expectation is taken with respect
  to $S_{n}$, the infimum is taken over all statistical estimators
  $\widehat{f}$, and the supremum is taken over all data-generating
  distributions and reference classes of functions of size $M$ that respect the
  above-described constraints.

Lecué and Mendelson~\cite{lecue2013optimality} asked whether it is possible to prove an upper bound for the model aggregation problem that exhibits some form of ``instance adaptivity'', that is, adaptivity to the hardness of the specific problem at hand.
This adaptivity should be quantitatively reflected in a risk bound that does not depend on the full size of the reference set $\Theta$, but only on the number of functions that are close to the best function in the class.
The proposition stated below provides such a guarantee.
We remark that partial results were obtained in~\cite{lecue2013optimality} under the so-called Bernstein assumption.

\begin{proposition}
  \label{prop:model-aggregation-random-design}
  Consider the random design model aggregation setup described above and let $\theta^{\star} \in \argmin_{j=1,\dots,M} R(\theta_{j})$.
  Let $\pi \in \probas (\Theta)$ be the uniform distribution over the finite set $\Theta$
  and set the inverse temperature parameter $\beta = n/(576 b^{2})$.
  Let $\fq_{n} = \fq_{n}(\pi, \beta)$ denote $Q$-aggregation estimator defined in
  \eqref{eq:q-aggregation-estimator-definition}.
  Then, for any $\delta \in (0,1)$, with probability at least $1-\delta$
  it holds that
  \begin{equation}
      R(\fq_{n}) - R(f_{\theta^{\star}})
      \leq c_{1} \frac{b^{2}}{n}\bigg[
        \log\bigg(\sum_{j=1}^{M}\exp\left(
            -c_{2}\frac{n}{b^2} \big(R(\theta_{j}) - R(\theta^{\star}) \big)\right)
           \bigg)
          +
           \log(1/\delta)
    \bigg],
  \end{equation}
  where $c_{1},c_{2} > 0$ are universal constants.
\end{proposition}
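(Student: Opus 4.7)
The plan is to apply Theorem~\ref{thm:pac-bayes-q-aggregation} and then reduce the resulting local complexity to a log-partition function via a single Jensen step, exploiting the uniform structure of the prior. The choice $\beta = n/(576 b^{2})$ in the proposition is a universal-constant multiple of $n/b^{2}$, so Theorem~\ref{thm:pac-bayes-q-aggregation} applies and yields, with probability at least $1-\delta$,
\[
  R(\fq_{n}) \;\leq\; \int_{\Theta} R(f_\theta)\, \pi_{-\eta R}(\di\theta) + \frac{c\,b^{2}\log(3/\delta)}{n},
\]
where $\eta = c_{1} n/(3 b^{2})$. Because $\pi$ is uniform on $\{\theta_{1},\dots,\theta_{M}\}$, the tilted prior takes the explicit form $\pi_{-\eta R}(\{\theta_{j}\}) = e^{-\eta R(f_{\theta_{j}})}/\sum_{k} e^{-\eta R(f_{\theta_{k}})}$. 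Subtracting $R(f_{\theta^{\star}})$ from both sides and writing $\Delta_{j} := R(f_{\theta_{j}}) - R(f_{\theta^{\star}}) \geq 0$, the task reduces to controlling
\[
  \frac{\sum_{j=1}^{M} \Delta_{j}\, e^{-\eta \Delta_{j}}}{\sum_{j=1}^{M} e^{-\eta \Delta_{j}}}
\]
by a quantity of the form $(1/\eta)\log\sum_{j} e^{-c\eta\Delta_{j}}$ for some universal $c > 0$.

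The core step is the bound $\E_{Q}[\Delta] \leq (2/\eta)\log \sum_{j} e^{-\eta \Delta_{j}/2}$, where $Q$ denotes the probability measure on $\{1,\dots,M\}$ with $Q(j) \propto e^{-\eta\Delta_{j}}$. I would obtain this by a single application of Jensen's inequality. Writing $Z = \sum_{k} e^{-\eta \Delta_{k}}$ and using the identity $e^{-\eta \Delta_{j}/2} = e^{-\eta \Delta_{j}}\cdot e^{\eta \Delta_{j}/2}$ yields
\[
  \log \sum_{j=1}^{M} e^{-\eta \Delta_{j}/2} \;=\; \log Z + \log \E_{Q}[e^{\eta\Delta/2}] \;\geq\; \log Z + \tfrac{\eta}{2}\, \E_{Q}[\Delta],
\]
where the inequality is Jensen's inequality for the concave function $\log$. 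Since $\Delta_{j^{\star}} = 0$ for at least one index, $Z \geq 1$ and hence $\log Z \geq 0$; rearranging gives the claimed bound on $\E_{Q}[\Delta]$. Substituting back into the PAC-Bayesian inequality and collecting universal factors produces the statement of the proposition, with the exponent $c_{2}\, n/b^{2}$ matching $\eta/2$ up to the chosen constant.

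The main obstacle I anticipate is purely one of constant bookkeeping: one must track the factors $1/3$ and $1/2$ arising from the tilt and the Jensen step, confirm that the specific $\beta = n/(576 b^{2})$ chosen in the proposition is compatible with the (unspecified) universal constant of Theorem~\ref{thm:pac-bayes-q-aggregation}, and absorb the $\log(3/\delta)$ into $\log(1/\delta)$ at the cost of universal multiplicative constants. No concentration, covering, or empirical-process ingredients beyond those already embedded in Theorem~\ref{thm:pac-bayes-q-aggregation} should be required.
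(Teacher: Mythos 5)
Your proof is correct, and at bottom it is the same argument as the paper's, just carried out by hand rather than by invoking a prepackaged lemma. The paper's one-line proof applies Theorem~\ref{thm:pac-bayes-q-aggregation}, then bounds the resulting local complexity $\localc{\eta}{\pi}$ (with $\eta = c_1 n/(3b^2)$) using the general inequality \eqref{eq:localvsglobalbound}, $\localc{2\beta}{\pi}\leq\globalc{\beta}{\pi_{-\beta R}}$ with $\beta = \eta/2$, and finally evaluates the variational formula for $\globalc{\beta}{\pi_{-\beta R}}$ at the Dirac mass $\gamma = \delta_{\theta^\star}$. Your Jensen step,
\[
\log\sum_{j}e^{-\eta\Delta_j/2} = \log Z + \log\E_{Q}\big[e^{\eta\Delta/2}\big]\;\geq\; \log Z + \tfrac{\eta}{2}\E_{Q}[\Delta],
\qquad Q(j)\propto e^{-\eta\Delta_j},\ Z = \sum_k e^{-\eta\Delta_k},
\]
together with $\log Z\geq 0$, is precisely the finite-uniform-prior instantiation of those two steps and delivers the same bound $\localc{\eta}{\pi}-R(\theta^\star)\leq (2/\eta)\log\sum_j e^{-\eta\Delta_j/2}$ with identical constants. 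The only difference is presentational: the paper routes through \eqref{eq:localvsglobalbound}, which was established earlier via the integral identity $\globalc{\beta}{\pi_{-\beta R}}=\beta^{-1}\int_\beta^{2\beta}\localc{u}{\pi}\,\di u$ and monotonicity, whereas you re-derive the relevant case directly. Both are valid; your version has the minor pedagogical advantage of being self-contained but the minor disadvantage of obscuring the general $\localc{2\beta}{\pi}\leq\globalc{\beta}{\pi_{-\beta R}}$ structure that the paper exploits elsewhere.
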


\begin{proof}
  The result follows from Theorem~\ref{thm:pac-bayes-q-aggregation} by upper bounding the local complexity using the inequality \eqref{eq:localvsglobalbound} and by plugging in $\gamma = \delta_{\theta^{\star}}$, a Dirac mass at $\theta^{\star}$, into the resulting bound.
\end{proof}

\subsection{Gaussian priors}
\label{sec:gauss-priors}

In this section, we consider the case where the prior $\pi$ on functions $f : \X \to \R$ is a centered Gaussian process.
In order to simplify notation, we assume that $\pi$ is supported on a finite-dimensional space of functions; this is not critical to our analysis, which would apply to the infinite-dimensional case with minor changes in terminology.
Furthermore, up to a change the of variable $x$, we may assume that $\X = \R^d$ and $\F$ is the class of linear functions $f_\theta (x) = \langle \theta, x\rangle$ for $\theta \in \R^d$; up to a linear transformation, we may assume that $\pi = \gaussdist (0, \gamma^{-1} I_d)$ for some $\gamma > 0$.

We first introduce some notation for both fixed and random design.
The sample covariance matrix is $\wh \Sigma_n = n^{-1} \sum_{i=1}^n X_i X_i^\top$, while for $\lambda > 0$ the \emph{ridge estimator} $\wh \theta_\lambda$ is:
\begin{equation*}
  \wh \theta_\lambda
  = \argmin_{\theta \in \R^d} \big\{ R_n (f_\theta) + \lambda \norm{\theta}^2 \big\}
  = (\wh \Sigma_n + \lambda I_d)^{-1} \cdot \frac{1}{n} \sum_{i=1}^n Y_i X_i
  .
\end{equation*}
We now determine the $Q$-aggregation estimator under Gaussian prior for the linear class.
\begin{proposition}
  \label{prop:q-aggregation-gaussian}
  Under the prior $\pi = \gaussdist (0, \gamma^{-1} I_d)$
  and with inverse temperature $\beta / 2$, the $Q$-aggregation posterior is $\gaussdist (\wh \theta_\lambda, \beta^{-1} (\frac{1}{2}\wh \Sigma_n + \lambda I_d )^{-1})$ while the exponential weights posterior is $\gaussdist (\wh \theta_\lambda, \beta^{-1} (\wh \Sigma_n + \lambda I_d )^{-1})$, where $\lambda = \gamma/\beta$.
  The associated prediction functions both coincide with that of the ridge regression estimator $f_{\wh \theta_\lambda}$.
\end{proposition}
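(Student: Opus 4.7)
The plan is to exploit Gaussian conjugacy directly for the exponential weights posterior, and to reduce the $Q$-aggregation problem to an optimization over Gaussian posteriors via a maximum-entropy (Pythagorean) argument, after which everything collapses to finite-dimensional quadratic calculus.

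\emph{Exponential weights.} By the explicit formula \eqref{eq:def-exp-weights-expression} with inverse temperature $\beta/2$, the Lebesgue density of $\pew_n$ is proportional to $\exp\bigl(-\tfrac{\beta}{2} R_n(\theta)\bigr)\exp\bigl(-\tfrac{\gamma}{2}\|\theta\|^2\bigr)$. Expanding $R_n(\theta) = \theta^\top \wh\Sigma_n \theta - 2\theta^\top b_n + \text{const}$ with $b_n = n^{-1}\sum_i Y_i X_i$, the exponent is quadratic in $\theta$, so $\pew_n$ is Gaussian. Completing the square identifies the precision matrix as $\beta\wh\Sigma_n + \gamma I_d = \beta(\wh\Sigma_n + \lambda I_d)$ and the mean as $(\wh\Sigma_n + \lambda I_d)^{-1} b_n = \wh\theta_\lambda$, which is the stated posterior.

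\emph{$Q$-aggregation.} The objective with inverse temperature $\beta/2$ is $F(\rho) = \tfrac{1}{2}\langle\rho, R_n\rangle + \tfrac{1}{2} R_n(\rho) + \tfrac{2}{\beta}\kll{\rho}{\pi}$. Since $f_\theta(x) = \langle \theta, x\rangle$ is linear in $\theta$, one checks that $R_n(\rho) = R_n(f_{\mu_\rho})$ depends on $\rho$ only through its mean $\mu_\rho$, and $\langle\rho, R_n\rangle = R_n(f_{\mu_\rho}) + \tr(\wh\Sigma_n \Sigma_\rho)$ only through $(\mu_\rho, \Sigma_\rho)$. Hence the first two terms of $F$ are determined by these first two moments. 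The key observation is the Pythagorean identity for KL divergences along the Gaussian exponential family: letting $\tilde\rho = \gaussdist(\mu_\rho, \Sigma_\rho)$, one has $\kll{\rho}{\pi} = \kll{\rho}{\tilde\rho} + \kll{\tilde\rho}{\pi} \geq \kll{\tilde\rho}{\pi}$ with equality iff $\rho = \tilde\rho$. Therefore the minimizer of $F$ is Gaussian, and we may restrict attention to $\rho = \gaussdist(\mu, \Sigma)$.

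\emph{Finite-dimensional optimization.} Substituting $\rho = \gaussdist(\mu,\Sigma)$ and the closed-form Gaussian KL against $\gaussdist(0,\gamma^{-1}I_d)$, the objective decouples into a term in $\mu$ (quadratic, involving $R_n(\mu)$ and $\tfrac{\gamma}{\beta}\|\mu\|^2$) and a term in $\Sigma$ (involving $\tfrac{1}{2}\tr(\wh\Sigma_n\Sigma)$, $\tfrac{\gamma}{\beta}\tr(\Sigma)$ and $-\tfrac{1}{\beta}\log\det\Sigma$). Setting the $\mu$-gradient to zero gives $(\wh\Sigma_n + \lambda I_d)\mu = b_n$, so $\mu = \wh\theta_\lambda$; setting the $\Sigma$-gradient to zero gives $\Sigma^{-1} = \tfrac{\beta}{2}\wh\Sigma_n + \gamma I_d = \beta(\tfrac{1}{2}\wh\Sigma_n + \lambda I_d)$, matching the stated covariance. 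Finally, both posteriors have mean $\wh\theta_\lambda$ and, by linearity of $\theta \mapsto f_\theta$, the induced predictors are both $f_{\wh\theta_\lambda}$, the ridge regression estimator.

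The only step beyond routine completion-of-squares is the maximum-entropy reduction for $Q$-aggregation: unlike exponential weights, the $Q$-aggregation objective is not a standard Bayesian posterior because of the nonlinear term $R_n(\rho)$, so one cannot simply apply Donsker--Varadhan; the argument that the minimizer is Gaussian, via the Pythagorean identity for KL in exponential families, is the main conceptual ingredient, and it is what makes the extension to infinite-dimensional Gaussian process priors essentially automatic.
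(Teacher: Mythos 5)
Your proof is correct and reaches the same finite-dimensional optimization over Gaussian pairs $(\mu,\Gamma)$ that the paper solves. The only structural difference is in how you establish that the $Q$-aggregation minimizer is Gaussian: the paper first proves a general representer theorem (Proposition~\ref{prop:representer}), valid for arbitrary families $\{f_\theta\}$, by showing that $\pq_n$ minimizes $\kll{\cdot}{\pi}$ over the moment-constrained slice $\Delta$ and citing the Cover--Thomas maximum-entropy theorem; only then does it specialize to the Gaussian case because $f_\theta$ is linear. You bypass that intermediate generality and argue directly, observing that the two empirical-risk terms depend on $\rho$ only through its first two moments and invoking the KL Pythagorean identity $\kll{\rho}{\pi} = \kll{\rho}{\tilde\rho} + \kll{\tilde\rho}{\pi}$ against the Gaussian exponential family. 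These are mathematically the same information-geometric fact in two guises (the max-entropy theorem is the Pythagorean inequality in disguise), so the content is equivalent; what the paper's detour buys is a representer theorem of independent interest for non-linear classes, while your direct route is shorter and more self-contained for the Gaussian-prior / linear-class case. You also carry out the exponential-weights completion of squares explicitly, which the paper treats as known. One small technical point worth being aware of (implicit in both treatments): the Pythagorean identity requires $\rho$ to have finite second moment, but this is harmless here since any $\rho$ with $\kll{\rho}{\pi}<\infty$ against a Gaussian $\pi$ that also enters the objective non-trivially can be restricted to this class without loss.
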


Exponential weights is well-known in this context, and can be computed directly thanks to its explicit expression.
In contrast, the solution to the $Q$-aggregation optimization problem does not have a closed form in general.
In this case, the solution is obtained by first showing that it is Gaussian (using Proposition~\ref{prop:representer}), and then solving the optimization problem among Gaussian posteriors.

Next, we express and compare the global and local complexities.

\begin{proposition}
\label{prop:local-global-gaussian}
Assume that $\X = \R^d$ and let
\[
\F = \{x \mapsto \langle \theta, x\rangle: \theta \in \R^d\}
\]
be the class of linear functions. Set $\theta_\lambda = \argmin_{\theta \in \R^d} \{ R (\theta) + \lambda \norm{\theta}^2 \}$, and $\Sigma = \E [X X^\top]$ (assumed finite) for random design, and $\Sigma = \wh \Sigma_n = n^{-1} \sum_{i=1}^n X_i X_i^\top$ for fixed design. Set $\pi = \gaussdist (0, \gamma^{-1} I_d)$. Then, for any $\beta > 0$, letting $\lambda = \gamma/\beta$,
  \begin{align}
    \label{eq:global-kl}
    \globalc{\beta/2}{\pi}
    &=
      R (\theta_\lambda) + \lambda \norm{\theta_\lambda}^2 + \frac{1}{\beta} \log \det \big( I_d + \lambda^{-1} \Sigma \big)
    \\
    \localc{\beta/2}{\pi}
    &=
      R (\theta_\lambda) + \frac{1}{\beta} \cdot \tr [ (\Sigma + \lambda I_d)^{-1} \Sigma]
      .
  \end{align}
  In addition, %
  assuming for the second inequality below that $0 < \lambda \leq \opnorm{\Sigma}$,
  \begin{equation}
    \label{eq:trace-log-det}
    \tr \big[ (\Sigma + \lambda I_d)^{-1} \Sigma \big]
    \leq \log \det \big( I_d + \lambda^{-1} \Sigma \big)
    \leq 2 \log \bigg( 1 + \frac{\opnorm{\Sigma}}{\lambda} \bigg) \cdot \tr \big[ (\Sigma + \lambda I_d)^{-1} \Sigma \big]
    .
  \end{equation}
\end{proposition}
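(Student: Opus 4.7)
Since $R$ is quadratic in $\theta$ with Hessian $2\Sigma$, the penalized objective
\[
F_\lambda (\theta) := R(\theta) + \lambda \norm{\theta}^2
\]
is a positive-definite quadratic with Hessian $2(\Sigma + \lambda I_d)$ and unique minimizer $\theta_\lambda$. The plan is to exploit this Gaussian--quadratic combination to compute both complexities in closed form, and then reduce the comparison inequality to a scalar inequality applied to the eigenvalues of $\Sigma$.

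\emph{Global complexity.} Since $\pi = \gaussdist(0,\gamma^{-1}I_d)$ has density proportional to $e^{-\gamma \norm{\theta}^2/2}$ and $\lambda = \gamma/\beta$, the unnormalized density appearing in the definition~\eqref{eq:global-kl-complexity} satisfies
\[
e^{-(\beta/2) R(\theta)}\pi(\di\theta) \;\propto\; \exp\!\Big(-\tfrac{\beta}{2}\bigl[R(\theta)+\lambda\norm{\theta}^2\bigr]\Big).
\]
Expanding $F_\lambda(\theta)=F_\lambda(\theta_\lambda)+(\theta-\theta_\lambda)^\top(\Sigma+\lambda I_d)(\theta-\theta_\lambda)$ (exact since $F_\lambda$ is quadratic) and performing the Gaussian integral with precision matrix $\beta(\Sigma+\lambda I_d)$ produces a factor $\det(I_d+\lambda^{-1}\Sigma)^{-1/2}$ after cancellation with the normalization constant of $\pi$. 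Taking $-2/\beta$ times the log yields exactly the claimed formula for $\globalc{\beta/2}{\pi}$.

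\emph{Local complexity.} The same completion of the square shows that the tilted posterior $\pi_{-(\beta/2)R}$ is the Gaussian $\gaussdist(\theta_\lambda, \beta^{-1}(\Sigma+\lambda I_d)^{-1})$ (this is also visible from Proposition~\ref{prop:q-aggregation-gaussian}, taking the exponential-weights case with $\beta$ there playing the role of $\beta/2$). Using the quadratic identity
\[
R(\theta) = R(\theta_\lambda) + \nabla R(\theta_\lambda)^\top(\theta-\theta_\lambda) + (\theta-\theta_\lambda)^\top \Sigma (\theta-\theta_\lambda),
\]
taking expectation under the posterior kills the linear term and turns the quadratic one into $\beta^{-1}\tr[\Sigma(\Sigma+\lambda I_d)^{-1}]$ via the trace identity $\E[u^\top A u]=\tr(A\,\mathrm{Cov}(u))$ for centered Gaussians. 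This gives the formula for $\localc{\beta/2}{\pi}$.

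\emph{Trace--logdet inequalities.} Diagonalize $\Sigma$ with eigenvalues $\mu_1,\dots,\mu_d\geq 0$ and set $t_i=\mu_i/\lambda$. Both sides of~\eqref{eq:trace-log-det} split over eigenvalues, so it suffices to compare $\tfrac{t}{1+t}$ and $\log(1+t)$ for $t\geq 0$. The lower bound $\tfrac{t}{1+t}\leq \log(1+t)$ is standard (integrate $1/(1+s)\geq 1/(1+t)$ on $[0,t]$). For the upper bound, set $T=\opnorm{\Sigma}/\lambda\geq 1$ (by hypothesis); one must show $\log(1+t)\leq 2\log(1+T)\cdot \tfrac{t}{1+t}$ for $t\in[0,T]$. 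Let
\[
g(t) = 2\log(1+T)\cdot \frac{t}{1+t} - \log(1+t);
\]
then $g(0)=0$, and $g(T)=\log(1+T)\cdot (T-1)/(T+1)\geq 0$ since $T\geq 1$. A direct computation shows $g'(t)=[2\log(1+T)-(1+t)]/(1+t)^2$, so $g$ is unimodal on $[0,T]$, increasing then decreasing; together with the nonnegativity at both endpoints this forces $g\geq 0$ throughout, completing the proof.

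\emph{Main obstacle.} The computations are essentially bookkeeping once one notices the Gaussian--quadratic structure; the only part requiring real thought is the upper half of~\eqref{eq:trace-log-det}, where one must track the correct constant $2$ and the role of the hypothesis $\lambda\leq \opnorm{\Sigma}$ (which is what makes $T\geq 1$ and hence $g(T)\geq 0$). All other steps are a direct consequence of Gaussian integration and the conjugacy of the Gaussian prior with the quadratic risk.
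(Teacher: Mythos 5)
Your proof is correct. For the two explicit-formula parts you follow essentially the same route as the paper: complete the square in $R(\theta)+\lambda\norm{\theta}^2$ to identify $\pi_{-(\beta/2)R}=\gaussdist(\theta_\lambda,\beta^{-1}(\Sigma+\lambda I_d)^{-1})$, read off the normalization constant for $\globalc{\beta/2}{\pi}$, and take the posterior mean of $R$ for $\localc{\beta/2}{\pi}$. Your local-complexity computation is slightly more streamlined than the paper's: you Taylor-expand $R$ around the posterior mean $\theta_\lambda$ and note the linear term integrates to zero, whereas the paper expands $R(\theta)=R(\theta^\star)+\norm{\theta-\theta^\star}_\Sigma^2$ and goes through the intermediate bias term $\norm{\theta_\lambda-\theta^\star}_\Sigma^2$ before recombining into $R(\theta_\lambda)$; both are valid and land in the same place.

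For the trace--log-determinant comparison your argument is genuinely different from the paper's. The paper substitutes $u_i=\lambda_i/(\lambda_i+\lambda)\in[0,1)$, rewrites both sides as $\sum u_i$ and $\sum -\log(1-u_i)$, and deduces the upper bound from convexity of $u\mapsto-\log(1-u)$ (monotonicity of the secant slope $-\log(1-u)/u$) combined with the observation that $\lambda\le\opnorm{\Sigma}$ forces $u_1\ge 1/2$. You instead work in the variable $t_i=\mu_i/\lambda$, set $T=\opnorm{\Sigma}/\lambda\ge 1$, and verify that $g(t)=2\log(1+T)\,\tfrac{t}{1+t}-\log(1+t)$ is nonnegative on $[0,T]$ via a monotonicity/unimodality-on-an-interval argument: $g'(t)=[2\log(1+T)-(1+t)]/(1+t)^2$ changes sign at most once (positive then negative), so the minimum of $g$ on $[0,T]$ is at an endpoint, and both $g(0)=0$ and $g(T)=\log(1+T)\cdot\tfrac{T-1}{T+1}\ge0$ (using $T\ge1$). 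Both proofs isolate where the hypothesis $\lambda\le\opnorm{\Sigma}$ enters; the paper's convexity argument is a one-liner once the right substitution is made, while yours is more computational but fully self-contained and does not require spotting the change of variables. Either is fine.
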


We note that the global entropic complexity already appears in the seminal work of~\cite{vovk2001competitive} in online linear regression, where it bounds the regret of the so-called Vovk-Azoury-Warmuth forecaster.
Here, the local complexity improves the global complexity in two ways.
First, it replaces the log determinant by a trace, allowing in particular to take $\lambda \to 0$; this allows to remove a $\log n$ factor as $n \to \infty$, and also to obtain bounds for arbitrary large norms of the comparison parameter $\theta$ (by taking $\lambda$ small enough that $\lambda \norm{\theta}^2 \ll d/n$).
Second, it removes an approximation term of $\lambda \norm{\theta_\lambda}^2$, which allows to obtain better rates in some regimes (see, \eg~\cite{blanchard2018optlip}). We also note that in the context of robust estimation, versions of local entropic complexities
have been computed in \cite{audibert2011robust, audibert2010linear}. 

Theorems~\ref{thm:localized-exp-weights-gaussian-model} and~\ref{thm:localized-fixed-design-q-aggregation} in the fixed design setting apply to the present context, and provide bounds for the ridge regression estimator featuring the local complexity of Proposition~\ref{prop:local-global-gaussian}. 
However, the random-design result does not apply to the Gaussian setting, since it requires boundedness of the base functions.
In Theorem \ref{thm:improper-ridge}, we indicate a different approach giving bounds for random design.

\subsection{Random-design bounds for ridge-type improper estimators}
\label{sec:improperridge}

In this section, we consider an alternative approach to obtaining excess risk bounds featuring related complexities as the one from the previous section, in the random design setting.

At this point, it is worth discussing some difficulties that are specific to the random design case.
Unlike for fixed design, we now aim to predict for a new, unseen point $X$.
This difference has the technical implication that, when analyzing an estimator like ridge regression, which is based on empirical risk minimization and corresponds to $Q$-aggregation as per Proposition~\ref{prop:q-aggregation-gaussian}, we must relate two distinct covariance matrices: the sample covariance matrix $\wh \Sigma_n$ (which measures the closeness of predictions of different parameters on the sample) and the population covariance $\Sigma$ (which controls closeness on a new random sample $X$).
To obtain guarantees of this nature, it is not hard to show that some assumptions on the distribution of $X$ are required beyond just the existence of $\Sigma$, such as controlled moments of order $p>2$.

Here, a key point is that precise excess risk bounds can nonetheless be obtained without additional assumptions on the distribution of $X$, provided that different estimators than ridge regression are considered; we refer to~\cite{gyorfi2002nonparametric,forster2002relative,tsybakov2003optimal,mourtada2021robust} for more information on this topic, in the case of linear regression (that is, with guarantees uniform over the linear class).

An important aspect of the problem at hand is that, in order to obtain nontrivial guarantees without additional assumptions on the distribution of $X$, one must consider aggregation procedures that return a function outside the class, that is, a nonlinear function~\cite{shamir2015sample,mourtada2021robust}.
While this is similar to the need for aggregation rules for finite classes discussed in the introduction, it calls for some explanation.
Indeed, for finite classes, the need for aggregation rules arises due to nonconvexity of the class; in contrast, the class of linear functions is convex.
In this case, the need for aggregation methods 
 stems from the unbounded nature of linear functions, leading to large errors when $X$ distribution can take large values with small probabilities~\cite{mourtada2021robust}. Hence, the guarantees of Section~\ref{sec:random-design} do not apply as Theorem~\ref{thm:localized-fixed-design-q-aggregation} needs bounded functions in the class. To ensure boundedness, we can truncate the original functions~\cite{gyorfi2002nonparametric,mourtada2021robust}, projecting their predictions to a bounded interval. However, this operation makes the resultant truncated linear functions class nonconvex, even if the original class was convex.

A natural approach to obtain guarantees is then to perform $Q$-aggregation on the class of \emph{truncated} linear functions, under a Gaussian prior on parameters.
One could then apply Theorem~\ref{thm:pac-bayes-q-aggregation}, since its boundedness condition does apply.
However, this approach has two main limitations.
First, the resulting estimator 
involves a non-convex minimization problem, whose computational complexity could depend exponentially on the dimension.
Second, it is unclear that the resulting procedure would satisfy a risk bound featuring the local complexity from Proposition~\ref{prop:local-global-gaussian}. 
Indeed, if one applies Theorem~\ref{thm:localized-fixed-design-q-aggregation}, the resulting bound would depend on the local entropic complexity (under Gaussian prior) of \emph{truncated} linear functions.
Assuming that $|Y| \leq b$, truncating linear functions at $b$ only reduces their risk, and thus also the associated global entropic complexity.
Hence, a bound based on the global complexity of Proposition~\ref{prop:local-global-gaussian} holds for this method.
In contrast, replacing functions by functions with smaller risk does not necessarily decrease the \emph{local} complexity, and as such the local entropic complexity of truncated linear functions could be larger than that of linear functions.

We now describe an alternative approach, which partly circumvents these limitations.
This approach is based on leave-one-out and exchangeability considerations, and has been used to design aggregation procedures for linear regression in~\cite{forster2002relative}, and for density estimation and logistic regression in~\cite{mourtada2022logistic}.
This approach allows to obtain bounds of ``local'' nature, without the additional logarithmic factors associated with ``global'' bounds.
In addition, it generally leads to estimators that are simple efficiently computable.
On the other hand, it only leads to guarantees in expectation rather than in deviation~\cite{mourtada2021robust}.
In addition, the resulting bound is slightly larger than the local entropic complexity of Proposition~\ref{prop:local-global-gaussian}.

We now describe the considered procedures and their guarantees.
Recall that the \emph{ridge} estimator is given by
\begin{equation}
  \label{eq:def-ridge}
  \wh \theta_\lambda
  = \argmin_{\theta \in \R^d} \big\{  R_n (\theta) + \lambda \norm{\theta}^2 \big\}
  = (\wh \Sigma_n + \lambda I_d)^{-1} \cdot \frac{1}{n} \sum_{i=1}^n Y_i X_i
  .
\end{equation}
For any $\lambda > 0$ and $x \in \R^d$, define the \emph{ridge leverage score} of $x \in \R^d$ by
\begin{equation}
  \label{eq:ridge-leverage}
  h_\lambda (x)
  = \bigg\langle \bigg( \sum_{i=1}^n X_i X_i^\top + \lambda n I_d + x x^\top \bigg)^{-1} x, x \bigg\rangle,
\end{equation}
and define (letting $\lambda' = (1+1/n)\lambda$)
\begin{equation}
  \label{eq:ridge-fw-smp}
  \wh f_\lambda (x)
  = (1 - h_{\lambda'} (x))^2 \cdot\langle \wh \theta_{\lambda'}, x\rangle
  .
\end{equation}
This estimator is a ridge-regularized analogue of the Forster-Warmuth procedure introduced in~\cite{forster2002relative}, which corresponds to the case $\lambda = 0$.
We therefore call it \emph{ridge FW}.
It can also be seen a linear version of the ridge-regularized logistic regression procedure from~\cite{mourtada2021robust}.
Also, for $b > 0$
define the truncation function $\psi_b : \R \to \R$ by $\psi_b (x) = \max (-b, \min (b, x))$ for $x \in \R$.
Then,
let
\begin{equation}
  \label{eq:ridge-truncated}
  \wh g_{\lambda, b} (x)
  = \psi_b \big( \innerp{\wh \theta_{\lambda'}}{x} \big)
  .
\end{equation}
This procedure is a ridge-regularized version of the truncated least squares estimator considered in~\cite{gyorfi2002nonparametric,mourtada2021robust}.

These estimators satisfy the following excess risk bounds:
\begin{theorem}
  \label{thm:improper-ridge}
  For any distribution $P$ on $\R^d \times \R$ such that $|Y| \leq b$
  almost surely and $\Sigma = \E X X^\top$ exists,
  \begin{align}
    \label{eq:ridge-fw}
    \E \big[ R (\wh f_\lambda) \big]
    &\leq \inf_{\theta \in \R^d} \big\{ R (\theta) + \lambda \norm{\theta}^2 \big\} + 2 m^2\, \frac{\tr [ (\Sigma + \lambda I_d)^{-1} \Sigma]}{n + 1}
      \, ; \\
    \E \big[ R (\wh g_{\lambda, b}) \big]
    &\leq \inf_{\theta \in \R^d} \big\{ R (\theta) + \lambda \norm{\theta}^2 \big\} + 8 m^2\, \frac{\tr [ (\Sigma + \lambda I_d)^{-1} \Sigma]}{n + 1}
      \label{eq:truncated-ridge-bound}
    .
  \end{align}
  In addition, let $\wh b = \max_{1 \leq i \leq n} |Y_i|$ and $\wh g_\lambda (x) = \wh g_{\lambda, \wh b} (x)$.
  Then, we have
  \begin{equation}
    \label{eq:adaptive-truncated-bound}
    \E \big[ R (\wh g_{\lambda}) \big]
    \leq \inf_{\theta \in \R^d} \big\{ R (\theta) + \lambda \norm{\theta}^2 \big\} + 8 b^2\, \frac{\tr [ (\Sigma + \lambda I_d)^{-1} \Sigma]}{n + 1} + \frac{b^2}{n+1}
    .
  \end{equation}
\end{theorem}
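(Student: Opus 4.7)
The proof proceeds by a leave-one-out argument based on exchangeability, in the spirit of Forster and Warmuth~\cite{forster2002relative} and the ridge-regularized analogues of~\cite{mourtada2022logistic, mourtada2021robust}. I introduce an independent copy $(X,Y) =: (X_{n+1},Y_{n+1})$ of the test point, so that $(X_i,Y_i)_{i=1}^{n+1}$ are i.i.d. For each $i\in\{1,\dots,n+1\}$, let $\wh f_\lambda^{(-i)}$, $\wh g_{\lambda,b}^{(-i)}$, $\wh g_\lambda^{(-i)}$ denote the respective estimators trained on the sample with the $i$-th observation removed. Since the integrand is symmetric in the $n+1$ indices, exchangeability yields
\begin{equation*}
\E\bigl[R(\wh f_\lambda)\bigr] \;=\; \frac{1}{n+1}\,\E\!\left[\sum_{i=1}^{n+1}\bigl(\wh f_\lambda^{(-i)}(X_i) - Y_i\bigr)^2\right],
\end{equation*}
and the analogous identities for the two truncated estimators. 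The choice $\lambda'=(1+1/n)\lambda$ is made so that the ridge on any $n$-subsample with parameter $\lambda'$ coincides with the leave-one-out reduction of the ``global'' $(n+1)$-sample ridge with parameter $\lambda$. Concretely, setting $A := \sum_{i=1}^{n+1} X_i X_i^\top + (n+1)\lambda\,I_d$, $\tilde\theta := A^{-1}\sum_{i=1}^{n+1} X_i Y_i$, and $h_i := \langle A^{-1}X_i,X_i\rangle$, the Sherman--Morrison formula gives $h_i = h_{\lambda'}(X_i)$ and the classical leave-one-out identity
\[
\langle\wh\theta_{\lambda'}^{(-i)},X_i\rangle \;=\; \frac{\langle\tilde\theta,X_i\rangle - h_i\,Y_i}{1 - h_i}.
\]

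For \eqref{eq:ridge-fw}, these identities give $\wh f_\lambda^{(-i)}(X_i) = (1-h_i)\bigl(\langle\tilde\theta,X_i\rangle - h_i Y_i\bigr)$. Expanding the squared residual and using the normal equation $\sum_{i=1}^{n+1}\bigl(\langle\tilde\theta,X_i\rangle - Y_i\bigr)X_i = -(n+1)\lambda\,\tilde\theta$ satisfied by the penalized minimizer $\tilde\theta$ on the $(n+1)$-sample, the cross terms produced by the Sherman--Morrison manipulation collapse (this is precisely what the $(1-h_{\lambda'}(x))^2$ weighting in the definition of $\wh f_\lambda$ is calibrated for) and one arrives at a bound of the form
\[
\sum_{i=1}^{n+1}\bigl(\wh f_\lambda^{(-i)}(X_i) - Y_i\bigr)^2 \;\leq\; \sum_{i=1}^{n+1}\bigl(\langle\tilde\theta,X_i\rangle - Y_i\bigr)^2 + (n+1)\lambda\,\|\tilde\theta\|^2 + 2b^2\sum_{i=1}^{n+1} h_i.
\]
By optimality of $\tilde\theta$, the first two terms on the right equal $(n+1)\inf_\theta\bigl\{R_{n+1}(\theta) + \lambda\|\theta\|^2\bigr\}$; taking expectations and using that the expected infimum is bounded by the infimum of expectations yields the $\inf_\theta\{R(\theta)+\lambda\|\theta\|^2\}$ term. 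To control the leverage sum, note that $\sum_i h_i = \tr\!\bigl[(\wh\Sigma_{n+1}+\lambda I_d)^{-1}\wh\Sigma_{n+1}\bigr]$, which is a concave function of $\wh\Sigma_{n+1}$ since $M\mapsto \tr\!\bigl[(M+\lambda I_d)^{-1}\bigr]$ is convex on PSD matrices; Jensen's inequality then gives $\E[\sum_i h_i]\leq \tr\!\bigl[(\Sigma+\lambda I_d)^{-1}\Sigma\bigr]$, yielding \eqref{eq:ridge-fw}.

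The truncated bounds follow the same template. The key observation is that, since $|Y|\leq b$, the projection $\psi_b$ is a contraction towards $Y$: one has $|\psi_b(a) - Y|\leq \min\{|a-Y|,2b\}$ always. Combined with $\wh g_{\lambda,b}^{(-i)}(X_i) = \psi_b(\bar\mu_i)$ where $\bar\mu_i = (\langle\tilde\theta,X_i\rangle - h_iY_i)/(1-h_i)$, one bounds the $i$-th squared residual by $(\langle\tilde\theta,X_i\rangle - Y_i)^2$ plus a correction of order $h_i b^2$; the looser constant $8$ reflects the absence of the $(1-h_i)^2$ cancellation and a crude application of $(a+c)^2\leq 2a^2+2c^2$. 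For \eqref{eq:adaptive-truncated-bound}, I decompose on whether $\wh b \geq |Y_{n+1}|$: on this event the previous analysis applies with $\wh b$ in place of $b$ (noting $\wh b\leq b$), while on its complement $\{|Y_{n+1}|>\max_{i\leq n}|Y_i|\}$ exchangeability bounds the probability by $1/(n+1)$, and the squared error there is at most $(2b)^2$, producing the additive $b^2/(n+1)$ term.

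The main obstacle is the algebraic step in the second paragraph: verifying that the $(1-h_{\lambda'}(x))^2$ weighting is exactly what makes the cross-terms in the expanded squared residual telescope, via the first-order optimality of $\tilde\theta$, into the clean leverage correction with the sharp constant $2$ rather than the crude $8$ that naive Cauchy--Schwarz/Young bounding would produce. The Jensen-based control of $\E[\sum_i h_i]$ is a secondary but self-contained step, relying only on the operator convexity of $M\mapsto(M+\lambda I_d)^{-1}$.
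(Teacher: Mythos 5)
Your overall framework matches the paper's: a leave-one-out/exchangeability argument reducing $\E[R(\wh f)]$ to an average over held-out residuals, the Sherman--Morrison identity linking the held-out ridge to the full $(n+1)$-sample ridge, the contraction property of $\psi_b$ for the truncated estimators, the low-probability event $\{|Y_{n+1}|>\max_{i\le n}|Y_i|\}$ for the adaptive one, and Jensen/operator-convexity of $M\mapsto\tr[(M+\lambda I_d)^{-1}]$ for the trace bound. The choice $\lambda'=(1+1/n)\lambda$ is interpreted correctly, and the identities $h_i=h_{\lambda'}(X_i)$ and $\langle\wh\theta^{(-i)}_{\lambda'},X_i\rangle=(\langle\tilde\theta,X_i\rangle-h_iY_i)/(1-h_i)$ are right.

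However, the step you yourself flag as ``the main obstacle'' is genuinely unresolved, and the mechanism you sketch for it --- expanding $\sum_i(\wh f_\lambda^{(-i)}(X_i)-Y_i)^2$, invoking the normal equation $\sum_i(\langle\tilde\theta,X_i\rangle-Y_i)X_i=-(n+1)\lambda\tilde\theta$, and arguing a cross-term telescoping across $i$ --- is not the right one and does not obviously close. The paper's argument is purely pointwise: for each fixed $i$, using $\wh f_\lambda^{(-i)}(X_i)=(1-h_i)^2\langle\theta_i,X_i\rangle$ and Lemma~\ref{lem:update-leverage} to write $\langle\wh\theta_{\lambda,n+1},X_i\rangle-Y_i=(1-h_i)(\langle\theta_i,X_i\rangle-Y_i)$, a direct expansion gives
\begin{equation*}
(\wh f_\lambda^{(-i)}(X_i)-Y_i)^2-(\langle\wh\theta_{\lambda,n+1},X_i\rangle-Y_i)^2
= -(1-h_i)^2\bigl[1-(1-h_i)^2\bigr]\langle\theta_i,X_i\rangle^2 + \bigl[1-(1-h_i)^2\bigr]Y_i^2
\le 2h_i b^2,
\end{equation*}
with no normal equation and no cancellation across indices. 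The comparison point is the \emph{full-data} ridge residual, and the infimum $\inf_\theta\{R(\theta)+\lambda\norm{\theta}^2\}$ then enters only through the elementary observation $\inf_\theta\{R(\theta)+\lambda\norm{\theta}^2\}\ge\E[R_{n+1}(\wh\theta_{\lambda,n+1})]$, summed term by term. Your additional $(n+1)\lambda\|\tilde\theta\|^2$ on the right is harmless (it just re-packages the same infimum) but the ``telescoping via the normal equation'' idea would need to be entirely replaced by this pointwise computation to yield the sharp constant $2$. Likewise, your explanation of the constant $8$ in~\eqref{eq:truncated-ridge-bound} via ``$(a+c)^2\le2a^2+2c^2$'' is not the actual source: it comes from $(U_i-Y_i)^2-(1-h_i)^2(\langle\theta_i,X_i\rangle-Y_i)^2\le h_i(2-h_i)(U_i-Y_i)^2\le 2h_i(2b)^2$, using only the contraction $(U_i-Y_i)^2\le(\langle\theta_i,X_i\rangle-Y_i)^2$ where $U_i=\psi_b(\langle\theta_i,X_i\rangle)$. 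So the scaffolding is correct and aligned with the paper, but the central algebraic lemma still needs to be supplied, and the route you propose for it should be discarded in favor of the direct pointwise comparison.
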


The three estimators $\wh f_\lambda, \wh g_{\lambda, b}$ and $\wh g_\lambda$ achieve the same risk bound up to constant factors.
The main differences are:
\begin{itemize}
\item $\wh f_\lambda$
  is more expensive computationally than truncated ridge at prediction time.
  Indeed, truncated ridge only requires to store the vector $\wh \theta_{\lambda'} \in \R^d$, which takes $O (d)$ memory (in the case of infinite- or high-dimensional reproducing kernel Hilbert spaces, using the so-called kernel trick one can replace $d$ by $n$ in all this discussion); computing predictions involves a scalar product, which takes $O (d)$ time.
  In contrast, $\wh f_\lambda$ requires storing both $\wh \theta_{\lambda'}$ and the precision matrix $S_\lambda = (\sum_{i=1}^{n} X_i X_i^\top + \lambda (n+1) I_d)^{-1}$, which takes $O (d^2)$ memory.
  Computation of the leverage $h_{\lambda'} (x) = \innerp{S_\lambda x}{x} / (1+\innerp{S_\lambda x}{x})$ (by Sherman-Morrison's identity) involves computing $\innerp{S_\lambda x}{x}$, which takes $O (d^2)$ time.
\item On the other hand, from a statistical perspective, $\wh g_{\lambda, b}$ requires prior knowledge of the parameter $b$, while $\wh f_\lambda$ is adaptive to $b$.
  This shortcoming of $\wh g_{\lambda, b}$ is addressed by the adaptive truncated estimator $\wh g_{\lambda}$.
\end{itemize}

In fact, one could replace $\tr [ (\Sigma + \lambda I_d)^{-1} \Sigma ]$ by the (smaller) quantity $\E \tr [ (\wh \Sigma_{n+1} + \lambda)^{-1} \wh \Sigma_{n+1} ]$, which is always defined without any assumption on the distribution of $X$.
We stated the bound in terms of the population covariance since it is more explicit.
In any case, it is worth noting that this bound is true under only a second moment assumption on $X$.

We also mention that a remarkable procedure for linear regression in the sequential setting was proposed by Vovk~\cite{vovk2001competitive} and Azoury and Warmuth~\cite{azoury2001relative}.
Combining its regret guarantee with the conversion described in the introduction, the averaged version $\wh g_\lambda$ of this procedure satisfies the following risk bound:
\begin{equation*}
  \E R (\wh g_\lambda)
  \leq \inf_{\theta \in \R^d} \big\{ R (\theta) + \lambda \norm{\theta}^2 \big\} + \frac{b^2 \cdot \log \det (\lambda^{-1} \Sigma + I_d)}{n + 1}
  .
\end{equation*}
Compared to this bound, the guarantees from Theorem~\ref{thm:improper-ridge} allow to replace the log-determinant by a smaller trace term.

\subsection{Aggregation of classifiers and almost exchangeable priors}
\label{sec:transductive}
\label{sec:binaryclass}

In the classic paper \cite{haussler1998sequential}, the following question is presented: Given a finite set $\mathcal F$ comprised of $\{0,1\}$-valued functions and assuming that $y_i \in \{0, 1\}$, how can one demonstrate the optimal sequential regret bound \eqref{eq:def-regret} for a particular loss function with curvature as a function of $n$ and the cardinality of $\mathcal F$? An important particular case of their main result is the squared loss function, which is also the subject of our focus. Another interesting analysis of classes of binary-valued functions in the context of statistical aggregation appears in \cite{rakhlin2017empirical}.
To present their result, we first need some standard definitions.
Let $\mathcal F = \{f_{\theta} : \theta \in \Theta\}$ be a class of $\{0, 1\}$-valued functions with the domain $\mathcal X$. The Vapnik-Chervonenkis (VC) dimension of $\mathcal F$ is the largest integer $d$ such that
\[
\max\limits_{x_1, \ldots, x_d \in \mathcal X}\left|\{(f(x_1), \ldots, f(x_d)): f \in \mathcal F\}\right| = 2^d.
\]
Classes of $\{0, 1\}$-valued functions with a finite VC dimension will be referred to as VC classes. Recall that for the squared loss we have $R(f) = \E[(f(X)-Y)^2]$. The result in \cite[Theorem 3 and Remark 4]{rakhlin2017empirical} can be stated as follows. For any VC class $\mathcal F$ whose VC dimension if equal to $d$ there is an aggregation procedure $\widehat{f}_{\textrm{RST}}$ such that, with probability at least $1 - \delta$,
\begin{equation}
  \label{eq:rakhlinsridharan}
  R(\widehat{f}_{\textrm{RST}})- \inf_{f \in \mathcal F}R(f) \le c\left(\frac{d}{n}\log\left(\frac{en}{d}\right) + \frac{1}{n}\log\left(\frac{1}{\delta}\right)\right),
\end{equation}
where $c > 0$ is some absolute constant provided that $n \ge d$. The primary difference from binary classification is the use of squared loss instead of the standard binary loss. This aligns with the work~\cite{haussler1998sequential}. As a result, one can get the fast for the improper estimator $\widehat{f}_{\textrm{RST}}$ without making any low noise/margin assumptions. Rakhlin, Sridharan and Tsybakov~\cite{rakhlin2017empirical} also present an example of VC class showing that this bound cannot be improved (including the $\log\left(en/d\right)$ term) up to multiplicative constant factors.
That is, at least in some cases, the inequality \eqref{eq:rakhlinsridharan} can be reversed (but with a different absolute constant).

Recent efforts in the field of binary classification have focused on improving the $\log\left(en/d\right)$ factor in the bounds. The authors of \cite{hanneke2016refined} and \cite{zhivotovskiy2018localization} leveraged additional complexity measures of VC classes to capture the refined local structure of the classes. A key complexity measure discussed in these papers is the \emph{star number}, which is defined as the largest integer $s$ such that there exists a subset $S =\{ x_1, \dots , x_s\} \subseteq \mathcal X$ and functions $f_0 , f_1,\dots , f_s \in \mathcal{F}$
such that for all $i = 1, \ldots, s$, $f_0$ and $f_i$ disagree on a unique point in $S$. That is, for all $i = 1, \ldots, s$, we have $\{x_i\} = \{x \in S: f_0(x) \neq f_i(x)\}$. In binary classification, it is shown in \cite{hanneke2016refined, zhivotovskiy2018localization} that the term $\log\left(en/d\right)$ can usually be replaced in the risk bounds with $\log\left(\min\{en/d, s\}\right)$ if the star number of $\mathcal F$ is finite. A natural question arises as to whether the star number, which characterizes the local structure of VC classes, can be applied in the context of bound \eqref{eq:rakhlinsridharan}. It appears that the methods of this paper allow us to do so. In order to provide such a setup, we extend our analysis to handle the so-called \emph{transductive algorithms}.
This approach was proposed by Vapnik~\cite{vapnik1999nature} and has been demonstrated to have useful connections with PAC-Bayesian arguments in subsequent works \cite{catoni2007pacbayes, audibert2004phd}. In the simplest form of the transductive setup, the learner works with an i.i.d sample of size $2n$. Among these points, the first $n$ points have their labels $Y_i$, while for the remaining $n$ points we only observe the objects $X_i$, but not their labels. We are now aiming to prove the high probability upper bound on the empirical excess risk computed with respect to the second part of the sample:
\[
R^{\prime}_{n}(\widehat{f}) - \min\limits_{f \in \F}R^{\prime}_{n}(f) = \frac{1}{n} \bigg( \sum_{i =n+1}^{2n}(\widehat{f}(X_i) - Y_i)^2 - \min\limits_{f \in \mathcal F}\sum_{i =n+1}^{2n}(f(X_i) - Y_i)^2 \bigg),
\]
where the estimator $\widehat{f}$ depends on $n$ labeled instances and all $2n$ unlabeled points. The idea is to replace the population risk $R(f)$ by its empirical counterpart $R^{\prime}_{n}(f)$ computed with respect to the second half of the sample.
Furthermore, for the inverse temperature parameter $\beta > 0$, we redefine the (data-dependent) localized prior as follows:
\begin{equation}
  \pi_{\beta (R_n +R^{\prime}_{n})}(\di \theta) =
  \frac{\exp(-\beta(R_n(f_{\theta}) +
  R^{\prime}_{n}(f_{\theta})))}{\int_{\Theta}\exp(-\beta(R_n(f_{\vartheta})+
  R^{\prime}_{n}(f_{\vartheta})))\pi(\di \vartheta)}.
\end{equation}
The next theorem is a direct extension of Theorem \ref{thm:pac-bayes-q-aggregation} to the transductive setup.
\begin{theorem}[A local bound with almost exchangeable priors]
\label{thm:exchangablepriors}
  Consider the transductive classification setup discussed above. Assume that the boundedness assumption \eqref{eq:boundedassumption} holds. Let $\pi \in \probas (\Theta)$ be any probability distributions that may depend on the sample of size $2n$.
  Let $\fq_{n}$ be the output of the $Q$-aggregation estimator
  defined in \eqref{eq:q-aggregation-estimator-definition} with inverse temperature
  $\beta = c_1n/b^2$. Then, for any $\delta \in (0, 1)$, with probability at least $1 - \delta$, it holds that
  \begin{equation}
    R_n^{\prime}(\fq_{n}) \leq
    \inf_{\gamma \in \probas(\Theta)}\left\{
        \langle \gamma, R^{\prime}_{n}\rangle + \frac{3b^2}{2c_1 n}\kl(\gamma,
        \pi_{-\frac{c_{1} n}{6b^2}(R_n + R^{\prime}_{n})})\right\} + c\frac{b^2 \log(3/\delta)}{n}.
    \end{equation}
  where $c, c_1 > 0$ are universal constants.
\end{theorem}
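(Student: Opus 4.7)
The plan is to follow the proof of Theorem~\ref{thm:pac-bayes-q-aggregation} while replacing the passage from empirical to population risk by a symmetrization argument between the two halves of the transductive sample. Since the $2n$ observations are i.i.d.\ and the prior together with the symmetric combination $R_n + R_n'$ is invariant under permutations of the $2n$ indices, the ghost half $(X_{n+1}, Y_{n+1}), \dots, (X_{2n}, Y_{2n})$ plays the role of the population risk $R$ in the random-design analysis, and no concentration of the empirical risk around the population risk is needed.

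First, I would exploit the first-order optimality of $\pq_n$ for the Q-aggregation objective on the first half of the sample. For any $\gamma \in \probas(\Theta)$, combining this optimality with the variance identity $\langle \rho, R_n \rangle - R_n(f_\rho) = \int_\Theta \|f_\theta - f_\rho\|_n^2 \, \rho(\di\theta)$ and the pointwise expansion $(a-y)^2 - (b-y)^2 = 2(b-y)(a-b) + (a-b)^2$ yields the usual Q-aggregation variational inequality
\[
R_n(\fq_n) + \tfrac{1}{2}\int_\Theta \|f_\theta - \fq_n\|_n^2 \, \pq_n(\di\theta) \le \langle \gamma, R_n \rangle + \tfrac{1}{\beta}\kl(\gamma,\pi) - \tfrac{1}{\beta}\kl(\pq_n,\pi),
\]
which upper bounds $R_n(\fq_n)$ in terms of the first-half empirical risk of the comparator $\gamma$.

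Next, to transfer this inequality to the second half of the sample, I would write
\[
R_n'(\fq_n) - \langle \gamma, R_n' \rangle = \big(R_n(\fq_n) - \langle \gamma, R_n \rangle\big) + \big((R_n' - R_n)(\fq_n) - \langle \gamma, R_n' - R_n \rangle\big)
\]
and control the second bracket by a PAC-Bayes Chernoff argument relative to the data-dependent but symmetric localized prior $\bar\pi = \pi_{-\beta(R_n + R_n')/(6b^2)}$. The boundedness assumption~\eqref{eq:boundedassumption} ensures that each per-sample difference $(f_\theta(X_{n+i}) - Y_{n+i})^2 - (f_\theta(X_i) - Y_i)^2$ has magnitude at most $4b^2$, and the exchangeability between indices $i$ and $i+n$ forces its conditional expectation given the unordered multi-set of observations to vanish. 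Applying the Donsker-Varadhan variational formula against $\bar\pi$ then yields the required exponential moment bound, and Markov's inequality supplies the high-probability term $cb^2\log(3/\delta)/n$.

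The main obstacle is the legitimate use of the data-dependent prior $\bar\pi$ in this Chernoff step. The key observation is that $\bar\pi$ depends on the $2n$ observations only through the symmetric quantity $R_n + R_n'$, so the swap permutations $(i, i+n)$ fix $\bar\pi$ while flipping the sign of each $(f_\theta(X_{n+i}) - Y_{n+i})^2 - (f_\theta(X_i) - Y_i)^2$. Conditioning on the orbit of this subgroup reduces the exponential moment of interest to a symmetric expectation over independent signs, which is bounded by $1$ via a standard sub-Gaussian estimate. Finally, the local complexity in the statement appears by rewriting $\kl(\gamma,\pi) = \kl(\gamma,\bar\pi) + \langle \gamma, \log(\di \pi/\di \bar\pi)\rangle$ and identifying the logarithmic term with the normalization of $\bar\pi$; the universal constants are tuned so that the quadratic remainder $(a-b)^2$ from the pointwise expansion is absorbed by the Q-aggregation variance term $\tfrac{1}{2}\int \|f_\theta - \fq_n\|_n^2 \, \pq_n(\di\theta)$.
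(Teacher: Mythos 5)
Your plan is essentially the paper's proof: both transplant the random-design argument for Theorem~\ref{thm:pac-bayes-q-aggregation} by letting the second half of the transductive sample play the role of the population measure, exploit Lemma~\ref{lemma:q-aggregation-quadratic-loss-negative-term} for the first-half optimality, and — crucially — note that the localized prior $\pi_{-\frac{\beta}{6}(R_n+R_n')}$ is invariant under swapping each pair $(X_i,Y_i)\leftrightarrow(X_{n+i},Y_{n+i})$, so that symmetrization by independent signs is available even though the prior is data-dependent. You have identified all the ideas that make the proof work.

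Two ingredients are glossed over. First, the quantity you want to bound in the PAC-Bayes step is $(R_n'-R_n)(\fq_n)$, which involves the \emph{mixture} predictor $f_{\widehat\rho}$, so $\ell_{\widehat\rho}$ is quadratic in $\widehat\rho$; the Donsker--Varadhan/PAC-Bayes machinery needs a quantity \emph{linear} in $\rho$. The paper (following Lecu\'e--Rigollet) passes to a linear quantity via the Lipschitz/contraction step $|\ell_\rho - \ell_{\rho'}| \le 4b|f_\rho - f_{\rho'}|$ after symmetrization, and this mismatch also forces the decomposition to carry the extra variance offsets $V_n, V_n'$ that you only allude to in your last sentence. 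Second, once you fix $\gamma$ as the (data-dependent) minimizer of the infimum, the comparator term $\langle\gamma, R_n' - R_n\rangle$ (equivalently the piece $\tfrac12 V_n(\gamma) - V_n'(\gamma)$ in the paper's decomposition) cannot be folded into the supremum over $\rho$: $\gamma$ is a single fixed measure, not quantified over. This needs its own swap-symmetrization argument with a quadratic offset, as in the adaptation of Lemma~\ref{lemma:bounding-T3}. Neither issue changes the route, but a complete write-up must treat both, and neither is a bookkeeping triviality.
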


The fact that our localized prior distribution now depends on the entire sample of size $2n$ makes the setup of Theorem \ref{thm:exchangablepriors} closely related to the idea of \emph{almost exchangeable priors}\footnote{The prior distribution $\pi$  depends on the entire sample of size $2n$, while two part of the sample are exchangeable due to independence, hence the terminology.} introduced in \cite{catoni2007pacbayes,audibert2004phd} and to \emph{conditional mutual information} appearing in the recent works \cite{steinke2020reasoning,grunwald2021pac,haghifam2021towards}.
Our focus is on the aggregation problem with squared loss without the realizability assumption, whereas prior work primarily addressed the standard binary classification setup.

We now demonstrate the application of Theorem \ref{thm:exchangablepriors} in the context of binary classification.
Consider a VC class $\mathcal F$. Given an \iid sample $S_{2n} = \{X_1, \ldots, X_{2n}\}$, we first consider the (random) projection class $\mathcal F|_{S_{2n}} = \{(f(X_1), \ldots, f(X_{2n})): f \in \mathcal F\}$ sample of unlabeled instances and choose the prior distribution $\pi_{S_{2n}}$ to be the uniform measure on $\mathcal{F}_{S_{2n}}$.
When working with our $Q$-Aggregation estimator, we identify $\mathcal{F}|_{S_{2n}}$ with the set $\Theta$. Observe that $\mathcal F|_{S_{2n}}$ is a finite class, so that our uniform distribution is correctly defined.

\begin{proposition}
  \label{thm:transductive-application}
  Consider the transductive setup of Theorem \ref{thm:exchangablepriors} and let $\mathcal F$ be a class of $\{0, 1\}$-valued functions with VC dimension $d$ and star number $s$. Let $\pi = \pi(S_{2n})$ be a uniform measure on the projection class $\mathcal F|_{S_{2n}}$, where $S_{2n}$ denotes the full sample of unlabeled points of size $2n$. Let $\fq_{n}$ be the output of the $Q$-aggregation estimator
  defined in \eqref{eq:q-aggregation-estimator-definition} with the inverse temperature parameter $\beta = c_1n$ and prior $\pi$. Then,
  \[
      R^{\prime}_{n}(\fq_{n}) - \min\limits_{f \in \F}R^{\prime}_{n}(f)
      \leq c\left\{ \frac{d}{n}
        \log\left[ es
          \left(\frac{nR(f^{\star})}{d} + \log\left(\frac{en}{d}\right)\right)
        \right]
      + \frac{\log(4/\delta)}{n}\right\},
    \]
  where $c, c_1>0$ are absolute constants and $R(f^{\star}) = \inf_{f \in \mathcal F}R(f)$.
\end{proposition}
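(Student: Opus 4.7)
The plan is to apply Theorem~\ref{thm:exchangablepriors} with $b = 1$, valid because both labels and functions in $\mathcal{F}$ take values in $\{0,1\}$, together with the data-dependent uniform prior $\pi$ on the finite projection class $\mathcal{F}|_{S_{2n}}$. By the Sauer--Shelah lemma, $|\mathcal{F}|_{S_{2n}}| \leq (2en/d)^d$, so $\pi$ assigns mass at least $(d/(2en))^d$ to each atom. The task then reduces to bounding the infimum over $\gamma$ on the right-hand side of Theorem~\ref{thm:exchangablepriors} for a well-chosen auxiliary distribution.

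The first step is to exploit the identity
\[
  \kl\bigl(\gamma, \pi_{-\beta'(R_n + R'_n)}\bigr) = \kl(\gamma, \pi) + \beta' \bigl\langle \gamma, R_n + R'_n\bigr\rangle + \log \bigl\langle \pi, e^{-\beta'(R_n + R'_n)}\bigr\rangle,
\]
with $\beta' = c_1 n/6$, and to test against $\gamma = \delta_{f^\star}$. Writing $M(g) = n(R_n + R'_n)(g)$ for the total mistakes of $g$ on $S_{2n}$, this reduces the problem to bounding the log-partition function $\log \Xi$, where $\Xi = \sum_{g \in \mathcal{F}|_{S_{2n}}} e^{-\beta' M(g)/n}$. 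Because $(g(X_i) - Y_i)^2 = \mathbb{1}\{g(X_i) \neq Y_i\}$ for $\{0,1\}$-valued predictions and labels, the triangle inequality gives $|M(g) - M(f^\star)| \leq d_{2n}(g, f^\star)$, where $d_{2n}(g, f_0) = |\{i \leq 2n : g(X_i) \neq f_0(X_i)\}|$.

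The main step is then to bound $\Xi$ by splitting the sum according to whether $d_{2n}(g, f^\star) \leq M(f^\star)$ and using both one-sided inequalities $M(g) \geq M(f^\star) - d_{2n}(g, f^\star)$ and $M(g) \geq d_{2n}(g, f^\star) - M(f^\star)$ in their respective regimes. The essential combinatorial input is the local growth-function bound for VC classes with star number $s$ available from \cite{hanneke2016refined, zhivotovskiy2018localization}: for any $f_0 \in \mathcal{F}$ and $k \geq 1$,
\[
  \bigl|\{g \in \mathcal{F}|_{S_{2n}}: d_{2n}(g, f_0) \leq k\}\bigr| \lesssim \max\bigl(1, (esk/d)^d\bigr).
\]
A saddle-point analysis of the resulting Laplace-type sum around the critical radius $k^\circ \asymp d/c_1$ should yield $\log \Xi \lesssim -\beta' M(f^\star)/n + d \log[es(M(f^\star)/d + \log(en/d))]$. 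The crucial point is that, upon multiplication by $3/(2c_1 n)$, the leading $-\beta' M(f^\star)/n$ contribution cancels the $\frac{1}{4}(R_n + R'_n)(f^\star)$ term that arises in the KL decomposition for $\gamma = \delta_{f^\star}$. This cancellation, together with a careful peeling argument in the transition region $d_{2n}(g, f^\star) \approx M(f^\star)$ and an exchangeability-based control of the tail of $g$'s that are strictly better than $f^\star$ on $S_{2n}$, constitutes the main technical hurdle of the proof.

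To conclude, I would pass from the empirical quantity $M(f^\star)/n = R_n(f^\star) + R'_n(f^\star)$ to the population risk $R(f^\star)$ via Bernstein's inequality applied to the i.i.d.\ sum $M(f^\star) = \sum_{i=1}^{2n} \mathbb{1}\{f^\star(X_i) \neq Y_i\}$, obtaining $M(f^\star)/n \lesssim R(f^\star) + \log(1/\delta)/n$ with probability at least $1 - \delta/4$. The remaining gap $R'_n(f^\star) - \min_{f \in \mathcal{F}} R'_n(f)$ on the left-hand side is controlled by a localized VC uniform-convergence bound in the spirit of \cite{hanneke2016refined, zhivotovskiy2018localization}, whose deviation term is itself governed by the same local complexity and is therefore absorbed into the main estimate.
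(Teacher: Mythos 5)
Your proposal follows the same high-level architecture as the paper's proof (apply Theorem~\ref{thm:exchangablepriors} with $b=1$ and a uniform prior on $\mathcal{F}|_{S_{2n}}$, reduce to bounding $\kl(\gamma,\pi_{-\frac{\beta}{6}(R_n+R'_n)})$ via a log-partition sum, control the level sets using the VC-dimension/star-number combinatorics, and pass to $R(f^\star)$ via a relative VC/Bernstein bound), but you make a different and, I believe, problematic choice of test distribution. The paper sets $\gamma=\delta_{f'}$ where $f'$ is the \emph{empirical} minimizer of $R_n+R'_n$ over $\mathcal{F}|_{S_{2n}}$. This is the key move: with that centering, the exponents in $\kl(\delta_{f'},\pi_{-\frac{\beta}{6}(R_n+R'_n)})=\log\sum_{g}\exp\bigl(-\tfrac{c_1}{6}(M(g)-M(f'))\bigr)$ (with $M(g)=n(R_n(g)+R'_n(g))$) are automatically nonnegative, and the sum can be bounded directly by $d\log[es(M(f')/d+1)]$ by slicing along the level sets $M(g)-M(f')=k$ and invoking \cite[Theorem~10]{hanneke2015minimax} together with Sauer--Shelah. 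No cancellation is needed.

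You instead take $\gamma=\delta_{f^\star}$, which produces the additional linear contribution $\tfrac{3}{2c_1n}\cdot\tfrac{c_1n}{6}\cdot\tfrac{M(f^\star)}{n}=\tfrac14(R_n(f^\star)+R'_n(f^\star))$ on the right-hand side; this term is $\Theta(R(f^\star))$, not $O(d/n)$. To absorb it you claim
\[
  \log\Xi \;\lesssim\; -\tfrac{\beta'}{n}M(f^\star)+d\log\bigl[es\bigl(M(f^\star)/d+\log(en/d)\bigr)\bigr],
  \qquad \Xi=\sum_{g}e^{-\beta'M(g)/n},
\]
but this cannot follow from the triangle-inequality/saddle-point computation alone. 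Taking the single term $g=f'$ gives the deterministic lower bound $\log\Xi\ge -\tfrac{\beta'}{n}M(f')$, so your claimed upper bound already forces $\tfrac{\beta'}{n}\bigl(M(f^\star)-M(f')\bigr)\lesssim d\log[\cdots]$ --- a statement about the \emph{excess empirical mistakes of $f^\star$ over the best function on $S_{2n}$}, which is precisely a relative uniform-convergence / relative-VC bound and only holds with high probability. You gesture at this as the ``main technical hurdle'' (controlling ``$g$'s that are strictly better than $f^\star$ on $S_{2n}$''), but you do not supply the argument, and the Laplace-type saddle-point analysis you invoke cannot produce the leading $-\tfrac{\beta'}{n}M(f^\star)$ term: the dominant contribution to $\Xi$ comes from Hamming radius $k\approx M(f^\star)$, giving $\log\Xi\asymp d\log(esM(f^\star)/d)$ with no negative linear piece. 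In short, centering at $f^\star$ does not save any work relative to centering at $f'$ --- it makes the partition-function bound strictly harder, reintroduces the very quantity $M(f^\star)-M(f')$ that the paper's choice eliminates, and leaves a genuine gap where the paper has a clean two-line computation. Switching to $\gamma=\delta_{f'}$ and decomposing by level sets of $M(g)-M(f')$ as in the paper closes this gap.
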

Our result provides the first improvement over the bound \eqref{eq:rakhlinsridharan} in \cite{rakhlin2017empirical}, but only in the transductive setting. An interesting regime is when $R(f^{\star}) \lesssim \frac{s^\kappa}{n}$, where $\kappa > 1$ is some absolute constant. The reason for including $R(f^{\star})$ in the logarithmic factor is that we need to account for the possibility of multiple minimizers of the population risk. This distinguishes our setup from all previous classification bounds that involved the star number, where the low-noise assumption implied the existence of a unique risk minimizer, making the local structure of the class significantly simpler.

\paragraph{Acknowledgments.} Jaouad Mourtada and Nikita Zhivotovskiy acknowledge the support of the France-Berkeley Fund.

\bibliographystyle{abbrv}%

\appendix

\section{Proofs}
\label{sec:proofs}

This appendix contains the proofs of all the results stated in the main text.

\subsection{Basic identities and inequalities}
\label{sec:basic-identities}

In this section, we collect some basic facts related to the $\kl$ divergence,
quadratic loss and the curvature of the empirical the $Q$-aggregation objective
defined in \eqref{eq:q-aggregation-estimator-definition}. These facts are repeatedly
used throughout the proofs of our main theorems stated in
Section~\ref{sec:main-results}.
We will repeatedly use the convex-conjugate duality
of the moment generating function and the $\kl$ divergence written in the following form: for any function $h : \Theta \to \R$ such that the quantities below are well-defined,
\begin{equation}
\label{eq:pacbayes}
\sup\limits_{\rho\in \mathcal{P}(\Theta)}\left\{
  \langle\rho,  h\rangle - \kl(\rho, \pi)
  \right\}
  = \log \int_\Theta \exp\left( h (\theta)\right) \pi (\di \theta).
\end{equation}

In order to obtain localized PAC-Bayes bounds, at some point in our proofs we
will need to replace the difference of localized $\kl$ divergences
$\kl(\rho, \pi_{-f}) - \kl(\gamma, \pi_{-f})$ by the difference $\kl(\rho,\pi)
- \kl(\gamma, \pi)$. The following lemma shows that the price of this
interchange is equal to $\langle \rho - \gamma, f\rangle$, for any function
$f$ used to localize the prior $\pi$.

\begin{lemma}
  \label{lemma:pb-localization}
  For any probability distributions $\rho,\gamma,\pi \in \mathcal{P}(\Theta)$
  such that $\rho \ll \pi$ and $\gamma \ll \pi$, and for any function
  $f : \Theta \to \R$ we have
  \begin{equation}
    \kl(\rho, \pi_{-f}) - \kl(\gamma, \pi_{-f})
    =
    \kl(\rho, \pi) - \kl(\gamma, \pi)
    +
    \langle \rho - \gamma, f \rangle.
  \end{equation}
\end{lemma}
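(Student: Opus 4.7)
The statement is an algebraic identity about how the $\mathrm{KL}$ divergence transforms under Gibbs tilting of the reference measure, so my plan is a direct computation using the chain rule for Radon--Nikodym derivatives. The only essential ingredient is the explicit form of $\pi_{-f}$: by definition $\pi_{-f} = e^{-f} \pi / Z$, where $Z = \int_\Theta e^{-f(\theta)} \pi(\di\theta)$, so $\pi_{-f}$ is equivalent to $\pi$ (assuming $f$ is finite $\pi$-a.e.) and
\[
\frac{\di \pi_{-f}}{\di \pi}(\theta) = \frac{e^{-f(\theta)}}{Z}.
\]

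Given $\rho \ll \pi$, I would use that $\rho \ll \pi_{-f}$ as well (since $\pi$ and $\pi_{-f}$ are mutually absolutely continuous) and apply the chain rule to write
\[
\log \frac{\di \rho}{\di \pi_{-f}}(\theta) = \log \frac{\di \rho}{\di \pi}(\theta) - \log \frac{\di \pi_{-f}}{\di \pi}(\theta) = \log \frac{\di \rho}{\di \pi}(\theta) + f(\theta) + \log Z.
\]
Integrating with respect to $\rho$ yields
\[
\kl(\rho, \pi_{-f}) = \kl(\rho, \pi) + \langle \rho, f\rangle + \log Z.
\]

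I would then repeat the same computation with $\gamma$ in place of $\rho$ to obtain the analogous identity $\kl(\gamma, \pi_{-f}) = \kl(\gamma, \pi) + \langle \gamma, f\rangle + \log Z$. Subtracting these two equalities makes the normalizing term $\log Z$, which does not depend on the argument of the divergence, cancel, and leaves exactly the claimed identity
\[
\kl(\rho, \pi_{-f}) - \kl(\gamma, \pi_{-f}) = \kl(\rho, \pi) - \kl(\gamma, \pi) + \langle \rho - \gamma, f\rangle.
\]

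There is no real obstacle here; the only care needed is a brief check that the quantities are well-defined (i.e.\ that $Z$ is finite and $f$ is $\rho$- and $\gamma$-integrable), which is implicit in the statement that the left-hand side makes sense. The conceptual content is simply that tilting the base measure by $e^{-f}$ shifts $\kl(\cdot, \pi)$ by a linear functional plus a constant, and the constant disappears upon taking differences.
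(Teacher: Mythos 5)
Your proof is correct and follows essentially the same route as the paper: both compute the intermediate identity $\kl(\mu, \pi_{-f}) = \kl(\mu, \pi) + \langle \mu, f\rangle + \log\langle \pi, e^{-f}\rangle$ for $\mu \in \{\rho, \gamma\}$ by expanding the Radon--Nikodym derivative $\di\mu/\di\pi_{-f}$, and then subtract so the normalizing constant cancels. Your framing in terms of the chain rule for Radon--Nikodym derivatives is a tidy way to present the same calculation.
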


\begin{proof}
  A direct computation yields
  \begin{align}
    \kl(\gamma, \pi_{-f})
    &= %
      \int_\Theta \log \bigg(\frac{\di\gamma}{\di \pi_{-f}}(\theta)\bigg) \gamma (\di \theta)
    \\
    &=
      \int_\Theta
      \log \left(
          \frac{\di \gamma}{\di\pi}(\theta)
          \cdot
          \frac
          {\langle \pi, \exp(-f)\rangle}
          {\exp(-f(\theta))}
      \right)
      \gamma (\di \theta)
    \\
    &=
    \kl(\gamma, \pi) + \langle \gamma, f \rangle
      + \log \langle \pi, \exp(-f)\rangle .
    \label{eq:KL-localized-gamma}
  \end{align}
  Similarly, it holds that
  \begin{equation}
    \label{eq:KL-localized-rho}
    \kl(\rho, \pi_{-f})
    =
    \kl(\rho, \pi) + \langle \rho, f \rangle
    + \log \langle \pi, \exp (-f)\rangle .
  \end{equation}
  Subtracting \eqref{eq:KL-localized-rho} from \eqref{eq:KL-localized-gamma}
  concludes the proof of this lemma.
\end{proof}

Next, we recall some basic properties of the squared loss. Define
\begin{equation}
    \label{eq:empirical-variance-definition}
    V_{n}(\rho) = \langle \rho, R_{n} \rangle - R_{n}(\rho) = \langle \rho, \|f - f_{\rho}\|_{n}^{2}\rangle
    = \int_\Theta \|f_{\theta} - f_{\rho}\|_{n}^{2} \, \rho (\di \theta),
\end{equation}
where we remind that $R_{n}$ is defined in \eqref{eq:empirical-risk}.
Then, for any $\gamma \in \probas(\Theta)$ we have
\begin{equation}
    \label{eq:variance-identity}
    V_{n}(\rho) + \|f_{\rho} - f_{\gamma}\|_{n}^{2} = \langle \rho, \|f - f_{\gamma}\|_{n}^{2}\rangle.
\end{equation}
The two identities stated above are repeatedly applied in the rest of the proofs without explicitly referring to this section.

The following lemma highlights the main property of the $Q$-aggregation estimator that distinguishes its analysis from the exponential weights.
This lemma bears similarity with the analysis using offset Rademacher complexities \cite{liang2015offset, zhivotovskiy2018localization, vijaykumar2021localization, kanade2022exponential}, where the curvature of the loss function allows to obtain a certain ``complexity regularizing'' term.
However, the term obtained in the lemma stated below is different from the one present in the offset Rademacher complexity analysis, which would correspond to a term proportional to $-\|f_{\widehat{\rho}} - f_{\gamma}\|_{n}^{2}$.

\begin{lemma}
  \label{lemma:q-aggregation-quadratic-loss-negative-term}
  Let $\pi \in \probas(\Theta)$ and $\beta > 0$ be arbitrary.
  Let $\widehat{\rho} = \pq_{n}(\beta, \pi)$ be the minimizer of the $Q$-aggregation objective defined in \eqref{eq:q-aggregation-estimator-definition}.
  Then, for any $\gamma \in \probas (\Theta)$,
  the following deterministic inequality holds:
  \begin{equation}
    R_{n}(\widehat{\rho}) - R_{n}(\gamma)
    - \frac{1}{\beta}\kl(\gamma, \pi)
    + \frac{1}{\beta}\kl(\widehat{\rho}, \pi)
    \leq
    -\frac{1}{2}\langle \widehat{\rho}, \|f - f_{\gamma}\|_{n}^{2} \rangle
    + \frac{1}{2}V_{n}(\gamma).
  \end{equation}
\end{lemma}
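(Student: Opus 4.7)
The plan is to reformulate the claimed inequality as a strong-convexity-type bound for the $Q$-aggregation objective
\[
  L(\rho) \;:=\; \tfrac{1}{2}\langle \rho, R_n\rangle + \tfrac{1}{2}R_n(\rho) + \tfrac{1}{\beta}\kl(\rho, \pi),
\]
whose minimiser on $\probas(\Theta)$ is $\widehat{\rho}$ by definition. Using $\langle \rho, R_n\rangle = R_n(\rho) + V_n(\rho)$ from~\eqref{eq:empirical-variance-definition}, I first rewrite $L(\rho) = R_n(\rho) + \tfrac{1}{2}V_n(\rho) + \tfrac{1}{\beta}\kl(\rho,\pi)$. Applying~\eqref{eq:variance-identity} with $\rho = \widehat{\rho}$, the right-hand side of the claim becomes $-\tfrac{1}{2}V_n(\widehat{\rho}) - \tfrac{1}{2}\|f_{\widehat{\rho}} - f_\gamma\|_n^2 + \tfrac{1}{2}V_n(\gamma)$, so after rearranging, the original inequality is equivalent to
\[
  L(\widehat{\rho}) + \tfrac{1}{2}\|f_{\widehat{\rho}} - f_\gamma\|_n^2 \;\le\; L(\gamma).
\]

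I would then establish this equivalent form by examining $L$ along the segment $\rho_t := (1-t)\widehat{\rho} + t\gamma$ for $t \in [0,1]$. By linearity of $\rho \mapsto f_\rho$, we have $f_{\rho_t} = (1-t)f_{\widehat{\rho}} + tf_\gamma$, and a parallelogram-type expansion of $R_n(\rho_t) = \|f_{\rho_t} - Y\|_n^2$ yields
\[
  R_n(\rho_t) \;=\; (1-t)R_n(\widehat{\rho}) + tR_n(\gamma) - t(1-t)\|f_{\widehat{\rho}} - f_\gamma\|_n^2,
\]
while the analogous computation for $V_n(\rho_t) = \int \|f_\theta\|_n^2\, \rho_t(\di\theta) - \|f_{\rho_t}\|_n^2$ gives
\[
  V_n(\rho_t) \;=\; (1-t)V_n(\widehat{\rho}) + tV_n(\gamma) + t(1-t)\|f_{\widehat{\rho}} - f_\gamma\|_n^2.
\]
The signs are crucial: $R_n$ is strongly convex along $\rho_t$ in the pseudo-distance $\|f_{\widehat{\rho}}-f_\gamma\|_n$, whereas $V_n$ is concave with the same quantity, and the factor $\tfrac{1}{2}$ in front of $V_n$ in $L$ is precisely what preserves net strong convexity. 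Combining the two displays with plain convexity of $\rho \mapsto \kl(\rho, \pi)$ then gives
\[
  L(\rho_t) \;\le\; (1-t)L(\widehat{\rho}) + tL(\gamma) - \tfrac{1}{2}t(1-t)\|f_{\widehat{\rho}} - f_\gamma\|_n^2.
\]

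Finally, since $\widehat{\rho}$ minimises $L$ on $\probas(\Theta)$, we have $L(\widehat{\rho}) \le L(\rho_t)$ for every $t \in [0,1]$. Substituting into the previous display, cancelling $(1-t)L(\widehat{\rho})$, dividing by $t>0$ and letting $t\downarrow 0$ yields exactly $L(\widehat{\rho}) + \tfrac{1}{2}\|f_{\widehat{\rho}} - f_\gamma\|_n^2 \le L(\gamma)$, which is the equivalent form derived above. The most error-prone step is book-keeping the opposite signs in the quadratic expansions of $R_n$ and $V_n$; beyond that the argument is entirely deterministic and algebraic, and it would visibly fail for the coefficient $\alpha = 1$ corresponding to plain exponential weights, where the $\tfrac{1}{2}V_n$ contribution is absent and the $-t(1-t)\|f_{\widehat{\rho}} - f_\gamma\|_n^2$ deficit coming from $R_n$ alone cannot be exploited after symmetrising with $\langle\rho,R_n\rangle$.
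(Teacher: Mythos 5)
Your proof is correct and is essentially the same argument as the paper's. The paper's proof writes the one-line claim that strong convexity of $Q_n(\rho) = \tfrac{1}{2}\langle\rho,R_n\rangle + \tfrac{1}{2}R_n(\rho) + \beta^{-1}\kl(\rho,\pi)$ together with optimality of $\widehat{\rho}$ gives $Q_n(\widehat{\rho}) - Q_n(\gamma) \le -\tfrac{1}{2}\|f_{\widehat{\rho}}-f_\gamma\|_n^2$, and then unwinds definitions; your reformulation $L(\widehat{\rho}) + \tfrac{1}{2}\|f_{\widehat{\rho}}-f_\gamma\|_n^2 \le L(\gamma)$ is exactly that inequality (with $L = Q_n$), and your interpolation-along-$\rho_t$ argument is the standard justification of the strong-convexity-at-the-optimum fact that the paper invokes without spelling it out.
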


\begin{proof}
Let $Q_{n}(\rho) = \frac{1}{2}R_{n}(\rho) + \frac{1}{2}\langle \rho, R_{n}\rangle + \frac{1}{\beta}\kl(\rho, \pi)$. Thus, $\widehat{\rho}$ is the unique minimizer of $Q_{n}$.
Observe that
\begin{align}
  R_{n}(\widehat{\rho}) - R_{n}(\gamma)
  =
  \left(
    Q_{n}(\widehat{\rho})
    -
    Q_{n}(\gamma)
  \right)
  - \frac{1}{2}V_{n}(\widehat{\rho})
  + \frac{1}{2}V_{n}(\gamma)
  - \frac{1}{\beta}\kl(\widehat{\rho}, \pi)
  + \frac{1}{\beta}\kl(\gamma, \pi).
  \label{eq:negative-term-lemma-decomposition}
\end{align}
Strong convexity of the squared loss combined with the
optimality of $\widehat{\rho}$ for the objective $Q_{n}$ yields:
\begin{align}
    Q_{n}(\widehat{\rho})
    -
    Q_{n}(\gamma)
    \leq
    - \frac{1}{2}\|f_{\widehat{\rho}} - f_{\gamma}\|_{n}^{2}.
\end{align}
Plugging the above into \eqref{eq:negative-term-lemma-decomposition}
yields
\begin{align}
  &R_{n}(\widehat{\rho}) - R_{n}(\gamma)
  \\
  &\leq
  -\frac{1}{2}\left(
    \|f_{\widehat{\rho}} - f_{\gamma}\|_{n}^{2}
    +
    V_{n}(\widehat{\rho})
  \right)
  + \frac{1}{2}V_{n}(\gamma)
  - \frac{1}{\beta}\kl(\widehat{\rho}, \pi)
  + \frac{1}{\beta}\kl(\gamma, \pi).
  \\
  &=
  -\frac{1}{2}\langle \widehat{\rho}, \|f - f_{\gamma}\|_{n}^{2} \rangle
  + \frac{1}{2}V_{n}(\gamma)
  - \frac{1}{\beta}\kl(\widehat{\rho}, \pi)
  + \frac{1}{\beta}\kl(\gamma, \pi),
\end{align}
where the final identity follows from the variance identity
\eqref{eq:variance-identity}.
\end{proof}

The distribution $\pew_{n}$ defined in \eqref{eq:exp-weights-estimator-definition}
admits a closed form analytic expression. This considerably simplifies the
analysis of the exponential weights estimator. In contrast, there is no simple
closed form expression for the distribution $\pq$ defined in
\eqref{eq:q-aggregation-estimator-definition}. Thus, in general, the analysis
of the $Q$-aggregation estimator poses more challenges than the exponential
weights estimator. A priori, $\pq_{n}$ is an arbitrary probability distribution on
$\Theta$, and therefore it takes values in the infinite-dimensional
space $\probas (\Theta)$ whenever $\Theta$ is infinite-dimensional.
Drawing similarities with the \emph{representer theorem} and the \emph{kernel
trick} commonly employed in
the context of reproducing kernel Hilbert spaces \cite{aronszajn1950theory,steinwart2008svm}, we obtain the
following result proved in Section~\ref{sec:proof-representer}. It states that $\pq_{n}$ lies in a
data-dependent finite-dimensional exponential family of distributions.
This characterization is particularly useful when the prior distribution $\pi$
is Gaussian as we shall later explain in Section~\ref{sec:gauss-priors}.

\begin{proposition}%
  \label{prop:representer}
  There exist (data-dependent) constants $(a_{i})_{1\leq i \leq n} \in \R^{n}$, $(b_{i})_{1 \leq i \leq n} \in \R^{n}$ and $c_0 \in \R$ such that, for $\pi$-almost all $\theta \in \Theta$,
  \begin{equation*}
    \frac{\di \pq_{n}}{\di \pi} (\theta)
    = \exp \bigg\{ \sum_{i=1}^n a_{i} f_{\theta} (X_i)^2 + \sum_{i=1}^n b_{i} f_\theta (X_i) + c_0 \bigg\}
    .
  \end{equation*}
\end{proposition}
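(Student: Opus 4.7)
The plan is to characterize $\widehat{\rho} := \pq_n$ via the first-order optimality condition for the objective
\[
J(\rho) = \tfrac{1}{2}\langle \rho, R_n\rangle + \tfrac{1}{2}R_n(f_\rho) + \tfrac{1}{\beta}\kl(\rho, \pi).
\]
This functional is strictly convex on $\probas(\Theta)$: the first term is linear in $\rho$, the second is convex as the composition of the squared loss with the linear map $\rho \mapsto f_\rho$, and $\kl(\cdot,\pi)$ is strictly convex. Consequently, a unique minimizer $\widehat{\rho}$ exists. Computing the G\^ateaux derivative of $J$ at $\widehat{\rho}$ in the direction $\eta - \widehat{\rho}$ for an arbitrary $\eta \in \probas(\Theta)$, using the chain rule together with the identity $f_\eta(x) - f_\rho(x) = \int f_\theta(x)\,(\eta-\rho)(\di\theta)$, yields $\int_\Theta g(\theta)\,(\eta - \widehat{\rho})(\di\theta)$, where
\[
g(\theta) = \tfrac{1}{2} R_n(f_\theta) + \tfrac{1}{n}\sum_{i=1}^n \bigl(f_{\widehat{\rho}}(X_i) - Y_i\bigr) f_\theta(X_i) + \tfrac{1}{\beta}\log \tfrac{\di \widehat{\rho}}{\di \pi}(\theta).
\]

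The first-order condition requires this derivative to be nonnegative for every $\eta \in \probas(\Theta)$; a standard argument (testing against arbitrary perturbations, together with the fact that the KL term blows up to $+\infty$ along any direction putting mass on $\{\di\widehat{\rho}/\di\pi = 0\}$) then forces $g$ to be $\pi$-a.e.\ equal to some constant. Substituting the expansion $R_n(f_\theta) = \tfrac{1}{n}\sum_i f_\theta(X_i)^2 - \tfrac{2}{n}\sum_i Y_i f_\theta(X_i) + \tfrac{1}{n}\sum_i Y_i^2$ into this identity and solving for $\log(\di\widehat{\rho}/\di\pi)$ produces
\[
\log \frac{\di \widehat{\rho}}{\di \pi}(\theta) = -\frac{\beta}{2n}\sum_{i=1}^n f_\theta(X_i)^2 + \frac{\beta}{n}\sum_{i=1}^n \bigl(2 Y_i - f_{\widehat{\rho}}(X_i)\bigr) f_\theta(X_i) + c_0,
\]
matching the claimed form with $a_i = -\beta/(2n)$, $b_i = (\beta/n)(2 Y_i - f_{\widehat{\rho}}(X_i))$, and $c_0$ the log-normalizing constant.

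The only delicate point is justifying this pointwise first-order analysis rigorously on the infinite-dimensional simplex $\probas(\Theta)$, including the $\pi$-a.e.\ positivity of $\di\widehat{\rho}/\di\pi$ needed to make sense of the $\log$ term in $g$. A cleaner alternative bypasses this altogether: since $J(\rho)$ depends on $\rho$ only through the $2n$ linear moment functionals $\rho \mapsto \langle \rho, f_\cdot(X_i)\rangle$ and $\rho \mapsto \langle \rho, f_\cdot(X_i)^2 \rangle$ (together with the normalization $\rho(\Theta) = 1$), minimizing $\kl(\rho,\pi)$ with those moments held fixed is a classical Gibbs / maximum-entropy problem whose solution, by the Donsker--Varadhan formula, lies in the exponential family $\di\rho/\di\pi \propto \exp\{\sum_i a_i f_\theta(X_i)^2 + \sum_i b_i f_\theta(X_i)\}$. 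This immediately yields the structural form of $\widehat{\rho}$, and a subsequent optimization over the moment values (equivalently, over $(a_i,b_i)$) recovers the specific coefficients above.
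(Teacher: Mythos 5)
Your ``cleaner alternative'' at the end is essentially the paper's own proof: the $Q$-aggregation objective depends on $\rho$ only through the $2n$ moment functionals $\langle\rho, f_\cdot(X_i)\rangle$ and $\langle\rho, f_\cdot(X_i)^2\rangle$, hence the minimizer must also minimize $\kl(\cdot,\pi)$ over the set of measures with the same moment values, and the maximum-entropy theorem then pins down the exponential-family form. The paper packages this as $Q_n(\rho)=\Phi(\dots)$, restricts to the fiber $\Delta$, and invokes Cover--Thomas; your Donsker--Varadhan phrasing is the same device by Lagrangian duality.

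The first-order (G\^ateaux stationarity) derivation you lead with is a genuinely different route and has the merit of producing the coefficients $a_i=-\beta/(2n)$ and $b_i=(\beta/n)(2Y_i - f_{\wh\rho}(X_i))$ explicitly, which the max-entropy argument does not. But as written it is only heuristic, for the two reasons you flag: the G\^ateaux computation needs to be justified on $\probas(\Theta)$ (choosing an appropriate topology and domain of finiteness for $\kl$), and ``$g$ is constant $\pi$-a.e.'' does not follow from $\int g\,(\eta-\wh\rho)\geq 0$ alone; that inequality only forces $g$ to equal its $\wh\rho$-mean on $\mathrm{supp}(\wh\rho)$, so one first needs $\di\wh\rho/\di\pi>0$ $\pi$-a.e.\ (which your infinite-slope-of-KL-near-zero observation does supply, but this step should be made explicit). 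For the structural claim in the proposition, the constrained max-entropy route is cleaner and is the one the paper takes; the variational route is a nice complement if one also wants the closed-form dependence of the coefficients on $\wh\rho$.
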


\subsection{Proof of Proposition~\ref{prop:representer}}
\label{sec:proof-representer}
  By definition of the $Q$-aggregation estimator, we have $\wh \rho_n = \argmin_{\rho} \{ \beta  Q_{n} (\rho) + \kll{\rho}{\pi} \}$, with
  \begin{align*}
    Q_{n} (\rho)
    &= (1-\alpha) \innerp{\rho}{R_n} + \alpha R_n (\rho) \\
    &= \frac{1-\alpha}{n} \sum_{i=1}^n \int_\Theta (f_\theta (X_i) - Y_i)^2 \rho (\di \theta)
      + \frac{\alpha}{n} \sum_{i=1}^n \bigg( \int_\Theta f_\theta (X_i) \rho (\di \theta) - Y_i \bigg)^2 \\
    &= \Phi \bigg\{ \bigg( \int_\Theta f_\theta (X_i)^2 \rho (\di \theta) \bigg)_{1 \leq i \leq n} , \bigg( \int_\Theta f_\theta (X_i) \rho (\di \theta) \bigg)_{1 \leq i \leq n} \bigg\} \,,
  \end{align*}
  where $\Phi : \R^{n} \times \R^n \to \R$ is defined by:
  \begin{equation*}
    \Phi \big\{ (u_{i})_{1 \leq i \leq n}, (v_i)_{1 \leq i \leq n} \big\}
    = \frac{1-\alpha}{n} \sum_{i=1}^n \big(  u_i - 2 v_i Y_i + Y_i^2 \big) + \frac{\alpha}{n} \sum_{i=1}^n (v_i - Y_i)^2
    .
  \end{equation*}
  Now, let $u_i = \int_{\Theta} f_\theta (X_i)^2 \wh \rho_n (\di \theta)$ and $v_i = \int_{\Theta} f_\theta (X_i) \wh \rho_n (\di \theta)$ for $i= 1, \dots, n$.
  Define the set:
  \begin{equation*}
    \Delta
    = \bigg\{ \rho \in \probas (\Theta) : \int_{\Theta} f_\theta (X_i)^2 \rho (\di \theta) = u_i, \ \int_{\Theta} f_\theta (X_i) \rho (\di \theta) = v_i , \ 1 \leq i \leq n \bigg\}
    .
  \end{equation*}
  Since $\wh \rho_n \in \Delta$, we have $\wh \rho_n = \argmin_{\rho \in \Delta} \{ \beta Q_{n} (\rho) + \kll{\rho}{\pi} \} = \argmin_{\rho \in \Delta} \kll{\rho}{\pi}$ since $Q (\rho) = Q (\wh \rho_n)$ for $\rho \in \Delta$ by the above.
  The conclusion then follows from the maximum entropy theorem; see, \eg \cite[Theorem~12.1.1,~p.~410]{cover2006elements}.
\hfill\qed

\subsection{Proof of Theorem~\ref{thm:localized-exp-weights-gaussian-model}}
\label{sec:proof-fixed-design-exp-weights}

Under the Gaussian noise assumption, we have
$$
  Y \sim \mathcal{N}(f^{\star}, \sigma^{2}I_{n}),
$$
where with a slight abuse of notation, $f^{\star}$ denotes the vector $(f^{\star}(X_{1}), \dots, f^{\star}(X_{n})) \in \R^{n}$.

We now recall some basic facts about Stein's Unbiased Risk Estimator (SURE).
Let $\widehat{f} : \R^{n} \to \R^{n}$ be any estimator of the
mean vector $f^{\star}$ as a function of the observations vector $Y$.
Then, SURE for the estimator $\widehat{f}$ is given by the identity
\begin{equation}
  \label{eq:sure-definition}
  \mathrm{SURE}(\widehat{f}, Y)
  =
  \|\widehat{f}(Y) - Y\|_{2}^{2}
  + 2\sigma^{2}\sum_{i=1}^{n}\frac{\partial}{\partial
  Y_{i}}\widehat{f}(Y)_{i}
  - n\sigma^{2},
\end{equation}
where $\widehat{f}(Y)_{i}$ denotes the $i$-th component of the $n$-dimensional vector $\widehat{f}(Y)$.
It suffices to assume the existence, continuity, and integrability of the above partial derivatives so that a simple Gaussian integration by parts argument yields
\begin{equation}
  \label{eq:sure-is-unbiased}
  \E_{Y}\left[\|\widehat{f} - f^{\star}\|_{2}^{2}\right]
  =
  \E_{Y}\left[
    \mathrm{SURE}(\widehat{f}, Y)
  \right].
\end{equation}

For the exponential weights estimator $\few_{n} = \langle \pew_{n}(\beta, \pi), f \rangle$, SURE admits a closed-form expression. 
To simplify the notation, in what follows let $\widehat{\rho} = \pew_{n}(\beta, \pi)$ and $\widehat{f} = \few_{n}$. Then, we have
\begin{equation}
  \widehat{\rho}(\di\theta)
  =
  \frac{\exp(-\beta R_{n}(f_{\theta}))\pi(d \theta)}
  {\int_{\Theta} \exp(-\beta R_{n}(f_{\theta}))\pi(d \theta)}.
\end{equation}
A direct computation of the partial derivatives yields
\begin{equation}
  \frac{\partial}{\partial Y_{i}}(\widehat{f}(Y))_{i}
  =
  \frac{2\beta}{n}\int_{\Theta}\left(
    (f_{\theta})_{i} - (\widehat{f})_{i}
  \right)^{2}
  \widehat{\rho}(\di\theta)
\end{equation}
and hence,
\begin{equation}
  \label{eq:sure-for-exponential-weights}
  \frac{1}{n}
  \mathrm{SURE}(\widehat{f}, Y)
  =
  \langle \widehat{\rho}, R_{n} \rangle
  + \bigg(\frac{4\beta\sigma^{2}}{n} - 1 \bigg)
  V_{n}(\widehat{\rho})
  - \sigma^{2}.
\end{equation}
In previous analyses of the exponential weights estimator
\cite{leung2006information,dalalyan2008aggregation},
the second term in the right hand side of the
above identity is dropped by taking
$\beta \leq \frac{n}{4\sigma^{2}}$ .
This is the part where our analysis departs from the proofs that yield global bounds: taking a smaller $\beta$ and keeping this term---that now
becomes strictly negative---is crucial in order to prove the localized bound stated in Theorem~\ref{thm:localized-exp-weights-gaussian-model}.
We now proceed to finish the proof of this theorem.

The constraints put on $\beta$ in the theorem statement guarantee that
$\frac{4\beta\sigma^{2}}{n} - 1 \leq -\frac{1}{2}$.
Hence, we may bound the SURE estimator \eqref{eq:sure-for-exponential-weights}
by
  \begin{equation}
    \label{eq:sure-bound}
    \frac{1}{n}
    \mathrm{SURE}(\widehat{f}, Y)
    \leq
    \langle \widehat{\rho}, R_{n} \rangle
    - \frac{1}{2}
    V_{n}(\widehat{\rho})
    - \sigma^{2}.
  \end{equation}
  Adding $V_{n}(\widehat{\rho}) + \sigma^{2}$ to both sides of the above inequality and recalling that   $\widehat{\rho}$ minimizes the $\kl$-divergence penalized
  empirical risk minimization objective
  \eqref{eq:exp-weights-estimator-definition}, for any $\gamma \in
  \mathcal{P}(\Theta)$ it holds that
  \begin{align}
    &\frac{1}{n}
    \mathrm{SURE}(\widehat{f}, Y)
    + \frac{1}{2}V_{n}(\widehat{\rho})
    + \sigma^{2}
    \\
    &\leq
    \langle \widehat{\rho}, R_{n} \rangle
    + \frac{1}{\beta}\kl(\rho, \pi)
    - \frac{1}{\beta}\kl(\rho, \pi)
    \\
    &\leq
    \langle \gamma, R_{n} \rangle
    + \frac{1}{\beta}\kl(\gamma, \pi)
    - \frac{1}{\beta}\kl(\rho, \pi)
    \\
    &=
    \langle \gamma, R_{n} \rangle
    + \frac{1}{\beta}\kl(\gamma, \pi_{-\frac{\beta}{2}R})
    - \frac{1}{\beta}\kl(\rho, \pi_{-\frac{\beta}{2}R})
    + \frac{1}{2}\langle \widehat{\rho} - \gamma, R \rangle
    \\
    &\leq
    \langle \gamma, R_{n} \rangle
    + \frac{1}{\beta}\kl(\gamma, \pi_{-\frac{\beta}{2}R})
    + \frac{1}{2}\langle \widehat{\rho} - \gamma, R \rangle,
    \label{eq:exp-weights-fixed-design-localization-proof-almost-final-step}
  \end{align}
  where the equality step follows from Lemma~\ref{lemma:pb-localization}.
  Now observe that
  \begin{equation}
  \langle \widehat{\rho}, R \rangle = \langle \widehat{\rho}, \|f - f^{\star}\|_{n}^{2}\rangle + \sigma^{2}
  = \|\widehat{f} - f^{\star}\|_{n}^{2} + V_{n}(\widehat{\rho}) + \sigma^{2}.
  \end{equation}
  Plugging the above identity into
  \eqref{eq:exp-weights-fixed-design-localization-proof-almost-final-step} and
  rearranging yields
  \begin{align}
    \frac{1}{n}
    \mathrm{SURE}(\widehat{f}, Y)
    -
    \frac{1}{2}\|\widehat{f} - f^{\star}\|_{n}^{2}
    \leq
    \langle \gamma, R_{n} - \frac{1}{2} R \rangle
    + \frac{1}{\beta}\kl(\gamma, \pi_{-\frac{\beta}{2}R})
    - \frac{1}{2}\sigma^{2}.
  \end{align}
  Taking expectations on both sides and using the fact that $\gamma$ does not
  depend on the sample $Y = (Y_{1},\dots,Y_{n})^{\top}$, we have
  \begin{align}
    \E_{Y}\left[
      \frac{1}{n}
      \mathrm{SURE}(\widehat{f}, Y)
    \right]
    -
    \frac{1}{2}
    \E_{Y}\left[
      \|\widehat{f} - f^{\star}\|_{n}^{2}
    \right]
    &\leq
    \frac{1}{2}
      \langle \gamma, R \rangle
    + \frac{1}{\beta}\kl(\gamma, \pi_{-\frac{\beta}{2}R})
    - \frac{1}{2}\sigma^{2}
    \\
    &=
    \frac{1}{2}
    \E_{\theta \sim \gamma}\left[
      \|f_{\theta} - f^{\star}\|_{n}^{2}
    \right]
    + \frac{1}{\beta}\kl(\gamma, \pi_{-\frac{\beta}{2}R}).
  \end{align}
  Finally, because SURE is unbiased \eqref{eq:sure-is-unbiased},
  it follows that
\[
    \E_{Y}\left[\|\widehat{f} - f^{\star}\|_{n}^{2}\right]
\leq
    \E_{\theta \sim \gamma}\left[
      \|f_{\theta} - f^{\star}\|_{n}^{2}
    \right]
    + \frac{2}{\beta}\kl(\gamma, \pi_{-\frac{\beta}{2}R}).
\]
  Recalling that the choice of $\gamma \in \mathcal{P}(\Theta)$ is arbitrary {and using the inequality \eqref{eq:localvsglobalbound}}, we finish the proof.\hfill\qed

\subsection{Proof of Theorem~\ref{thm:localized-fixed-design-q-aggregation}}
\label{sec:proof-q-aggregation-fixed-design}
  To simplify the notation, let $\widehat{f} = \fq_{n}$ and $\widehat{\rho} = \pq_{n}(\beta, \pi)$.
  Moreover, we identify all the functions $f : \mathcal{X} \to \R$ as $n$-dimensional vectors $(f(X_{1}), \dots, f(X_{n}))^{\top} \in \R^{n}$.
  Let $\gamma$ be the minimizer of
  $\gamma' \mapsto
      \langle \gamma', \|f - f^{\star}\|_{n}^{2} \rangle
      + \frac{3}{2\beta}
      \kl(\gamma', \pi_{-\frac{\beta}{3}R}).
  $
  Then, we have
  \begin{align}
    &\|f_{\widehat{\rho}} - f^{\star}\|_{n}^{2}
    -
    \inf_{\gamma' \in \mathcal{P}(\Theta)}
    \left\{
       \langle \gamma', \|f - f^{\star}\|_{n}^{2} \rangle
      + \frac{3}{2\beta}
      \kl(\gamma', \pi_{-\frac{\beta}{3}R})
    \right\}
    \\
    &=
      \|f_{\widehat{\rho}} - f^{\star}\|_{n}^{2}
      -
       \langle \gamma, \|f - f^{\star}\|_{n}^{2} \rangle
      - \frac{3}{2\beta}
      \kl(\gamma, \pi_{-\frac{\beta}{3}R})
    \\
    &=
      \|f_{\widehat{\rho}} - f^{\star}\|_{n}^{2}
      -
      \|f_{\gamma} - f^{\star}\|_{n}^{2}
      - V_{n}(\gamma)
      - \frac{3}{2\beta}
      \kl(\gamma, \pi_{-\frac{\beta}{3}R})
    \\
    &=
      R_{n}(\widehat{\rho})
      - R_{n}(\widehat{\gamma})
      +\frac{2}{n}\langle f_{\widehat{\rho}} - f_{\gamma}, Z \rangle
      - V_{n}(\gamma)
      - \frac{3}{2\beta}
      \kl(\gamma, \pi_{-\frac{\beta}{3}R})
    \\
    &=
    R_{n}(\widehat{\rho})
    + \frac{3}{2\beta} \kl(\widehat{\rho}, \pi_{-\frac{\beta}{3}R})
    - R_{n}(\widehat{\gamma})
    - \frac{3}{2\beta}\kl(\gamma, \pi_{-\frac{\beta}{3}R})
    \\
    &\quad\quad
      +\frac{2}{n}\langle f_{\widehat{\rho}} - f_{\gamma}, Z \rangle
      - V_{n}(\gamma)
      - \frac{3}{2\beta}
      \kl(\widehat{\rho}, \pi_{-\frac{\beta}{3}R}).
      \label{eq:fixed-design-proof-q-aggregation-intermediate-step}
  \end{align}
  Next, applying Lemma~\ref{lemma:pb-localization} we have
  \begin{align}
    &\frac{3}{2\beta} \kl(\widehat{\rho}, \pi_{-\frac{\beta}{3}R})
     - \frac{3}{2\beta}\kl(\gamma, \pi_{-\frac{\beta}{3}R})
    \\
    &=
     \frac{3}{2\beta} \kl(\widehat{\rho}, \pi)
     - \frac{3}{2\beta}\kl(\gamma, \pi)
     + \frac{3}{2\beta}\langle \widehat{\rho} - \gamma,
        \frac{\beta}{3}R\rangle
    \\
    &=
     \frac{3}{2\beta} \kl(\widehat{\rho}, \pi)
     - \frac{3}{2\beta}\kl(\gamma, \pi)
     + \frac{1}{2}R(\widehat{\rho}) + \frac{1}{2}V_{n}(\widehat{\rho})
     - \frac{1}{2}R(\gamma) - \frac{1}{2}V_{n}(\gamma)
      \\
    &=
     \frac{3}{2\beta} \kl(\widehat{\rho}, \pi)
     - \frac{3}{2\beta}\kl(\gamma, \pi)
     + \frac{1}{2}R_n(\widehat{\rho}) + \frac{1}{2}V_{n}(\widehat{\rho})
     - \frac{1}{2}R_n(\gamma) - \frac{1}{2}V_{n}(\gamma)
     + \frac{1}{n}\langle f_{\widehat{\rho}} - f_{\gamma}, Z \rangle.
  \end{align}
  Plugging the above identity into
  \eqref{eq:fixed-design-proof-q-aggregation-intermediate-step} results in the
  following equality
  \begin{align}
    &\|f_{\widehat{\rho}} - f^{\star}\|_{n}^{2}
    -
    \inf_{\gamma' \in \mathcal{P}(\Theta)}
    \left\{
       \langle \gamma', \|f - f^{\star}\|_{n}^{2} \rangle
      + \frac{3}{2\beta}
      \kl(\gamma', \pi_{-\frac{\beta}{3}R})
    \right\}
    \\
    &=
    \frac{3}{2}\left(
      R_{n}(\widehat{\rho})
      + \frac{1}{\beta} \kl(\widehat{\rho}, \pi)
      - R_{n}(\widehat{\gamma})
      - \frac{1}{\beta}\kl(\gamma, \pi)
    \right)
    \label{eq:q-aggregation-fixed-design-curvature-term}
    \\
    &\quad\quad
      +\frac{3}{n}\langle f_{\widehat{\rho}} - f_{\gamma}, Z \rangle
      - \frac{3}{2}V_{n}(\gamma)
      + \frac{1}{2}V_{n}(\widehat{\rho})
      - \frac{3}{2\beta}
      \kl(\widehat{\rho}, \pi_{-\frac{\beta}{3}R}).
  \end{align}
  Bounding the term \eqref{eq:q-aggregation-fixed-design-curvature-term} via
  the deterministic inequality proved in
  Lemma~\ref{lemma:q-aggregation-quadratic-loss-negative-term},
  the right-hand side of the above identity is upper bounded by
  \begin{align}
    &-\frac{3}{4}\langle\widehat{\rho}, \|f - f_{\gamma}\|_{n}^{2}\rangle
    + \frac{3}{4}V_{n}(\gamma)
      +\frac{3}{n}\langle f_{\widehat{\rho}} - f_{\gamma}, Z \rangle
      - \frac{3}{2}V_{n}(\gamma)
      + \frac{1}{2}V_{n}(\widehat{\rho})
      - \frac{3}{2\beta}
      \kl(\widehat{\rho}, \pi_{-\frac{\beta}{3}R})
    \\
    &\leq
    -\frac{3}{4}\langle\widehat{\rho}, \|f - f_{\gamma}\|_{n}^{2}\rangle
      +\frac{3}{n}\langle f_{\widehat{\rho}} - f_{\gamma}, Z \rangle
      + \frac{1}{2}V_{n}(\widehat{\rho})
      - \frac{3}{2\beta}
      \kl(\widehat{\rho}, \pi_{-\frac{\beta}{3}R})
    \\
    &\leq
    -\frac{1}{4}\langle\widehat{\rho}, \|f - f_{\gamma}\|_{n}^{2}\rangle
      +\frac{3}{n}\langle f_{\widehat{\rho}} - f_{\gamma}, Z \rangle
      - \frac{3}{2\beta}
      \kl(\widehat{\rho}, \pi_{-\frac{\beta}{3}R}),
  \end{align}
  where in the final inequality we used the fact that
  $\langle \widehat{\rho}, \|f - f_{\gamma}\|_{n}^{2} \rangle
  \geq V_{n}(\widehat{\rho})$.
  Putting everything together results in the following deterministic
  inequality:
  \begin{align}
    &\frac{2\beta}{3}
    \|f_{\widehat{\rho}} - f^{\star}\|_{n}^{2}
    -
    \frac{2\beta}{3}
    \inf_{\gamma' \in \mathcal{P}(\Theta)}
    \left\{
        \langle \gamma', \|f - f^{\star}\|_{n}^{2} \rangle
      + \frac{3}{2\beta}
      \kl(\gamma', \pi_{-\frac{\beta}{3}R})
    \right\}
    \\
    &\leq
    -\frac{\beta}{6}\langle\widehat{\rho}, \|f - f_{\gamma}\|_{n}^{2}\rangle
      +\frac{2\beta}{n}\langle f_{\widehat{\rho}} - f_{\gamma}, Z \rangle
      - \kl(\widehat{\rho}, \pi_{-\frac{\beta}{3}R})
    \\
    &\leq
    \sup_{\rho \in \mathcal{P}(\Theta)}\left\{
    -\frac{\beta}{6}\langle \rho, \|f - f_{\gamma}\|_{n}^{2}\rangle
      +\frac{2\beta}{n}\langle f_{\rho} - f_{\gamma}, Z \rangle
      - \kl(\rho, \pi_{-\frac{\beta}{3}R})
    \right\}
    \\
    &=
    \log\left(
      \E_{\theta \sim \pi_{-\frac{\beta}{3}R}}
      \left[
        \exp\left(
          -\frac{\beta}{6}\|f_{\theta} - f_{\gamma}\|_{n}^{2}
          + \frac{2\beta}{n}\langle f_{\theta} - f_{\gamma}, Z \rangle
        \right)
      \right]
    \right) = U,
    \label{eq:q-aggregation-fixed-design-final-deterministic-bound}
  \end{align}
  where the last line follows from \eqref{eq:pacbayes}.
  We will now show that the right-hand side of the above inequality is small
  with high probability by bounding its moment-generating function.
  Indeed, we have
  \begin{align}
    \E_{Z}\left[\exp(U)\right]
    &=
    \E_{Z}
    \E_{\theta \sim \pi_{-\frac{\beta}{3}R}}
    \left[
      \exp\left(
        -\frac{\beta}{6}\|f_{\theta} - f_{\gamma}\|_{n}^{2}
        + \frac{2\beta}{n}\langle f_{\theta} - f_{\gamma}, Z \rangle
      \right)
    \right]
    \\
    &=
    \E_{\theta \sim \pi_{-\frac{\beta}{3}R}}
    \E_{Z}
    \left[
      \exp\left(
        -\frac{\beta}{6}\|f_{\theta} - f_{\gamma}\|_{n}^{2}
        + \frac{2\beta}{n}\langle f_{\theta} - f_{\gamma}, Z \rangle
      \right)
    \right]
    \\
    &=
    \E_{\theta \sim \pi_{-\frac{\beta}{3}R}}
    \left[
    \exp\left(
     -\frac{\beta}{6}\|f_{\theta} - f_{\gamma}\|_{n}^{2}
    \right)
    \E_{Z}
    \left[
      \exp\left(
        \frac{2\beta}{n}\langle f_{\theta} - f_{\gamma}, Z \rangle
      \right)
    \right]
  \right]
    \\
    &=
    \E_{\theta \sim \pi_{-\frac{\beta}{3}R}}
    \left[
    \exp\left(
     -\frac{\beta}{6}\|f_{\theta} - f_{\gamma}\|_{n}^{2}
    \right)
    \prod_{i=1}^{n}
    \E_{Z_{i}}
    \left[
      \exp\left(
        \frac{2\beta}{n}(f_{\theta}(X_{i}) - f_{\gamma}(X_{i}))Z_{i}
      \right)
    \right]
    \right],
  \end{align}
  where the last line follows by the independence of the noise variables
  $Z_{1}, \dots, Z_{n}$. Further, by the sub-Gaussianity assumption on the
  noise we have
  \begin{align}
    \E_{Z}\left[\exp(U)\right]
    &\leq
    \E_{\theta \sim \pi_{-\frac{\beta}{3}R}}
    \left[
      \exp\left(
       -\frac{\beta}{6}\|f_{\theta} - f_{\gamma}\|_{n}^{2}
      \right)
      \prod_{i=1}^{n}
      \exp\left(
        \frac{2\beta^2(f_{\theta}(X_{i}) - f_{\gamma}(X_{i}))^{2}\sigma^{2}}{n^2}
      \right)
    \right]
    \\
    &=
    \E_{\theta \sim \pi_{-\frac{\beta}{3}R}}
    \left[
      \exp\left(
       \left[\frac{2\beta^{2}\sigma^{2}}{n}
       - \frac{\beta}{6}\right]
       \|f_{\theta} - f_{\gamma}\|_{n}^{2}
      \right)
    \right].
  \end{align}
  In particular, for any $\beta \in (0, \frac{n}{12\sigma^{2}}]$
  it holds that $\E_{Z}[\exp(U)] \leq 1$.
  Hence, by Markov's inequality and the bound
  \eqref{eq:q-aggregation-fixed-design-final-deterministic-bound},
  for any $\delta \in (0,1)$, with probability at least $1-\delta$, it holds that
  \begin{equation}
    \frac{2\beta}{3}
    \|f_{\widehat{\rho}} - f^{\star}\|_{n}^{2}
    -
    \frac{2\beta}{3}
    \inf_{\gamma' \in \mathcal{P}(\Theta)}
    \left\{
        \langle \gamma', \|f - f^{\star}\|_{n}^{2} \rangle
      + \frac{3}{2\beta}
      \kl(\gamma', \pi_{-\frac{\beta}{3}R})
    \right\}
    \leq U
    \leq \log(1/\delta).
  \end{equation}
  By re-scaling the above inequality by $3/(2\beta)$ {and using the inequality \eqref{eq:localvsglobalbound}}, we finish the
  proof.
  \hfill\qed

\subsection{Proof of Theorem~\ref{thm:pac-bayes-q-aggregation}}
\label{sec:proof-random-design-localization}

Let $P_{X}$ be the marginal distribution of the covariates $X$ and for any $f \in L_{2}(P_{X})$ introduce the notation $$\|f\|_{2}^{2} = \|f\|_{L_{2}(P_{X})}^{2} = \E_{X\sim P_{X}}[f(X)^2].$$
Then, for any $\gamma \in \probas(\Theta)$ let $V(\gamma) = \langle \gamma, \|f - f_{\gamma}\|_{2}^{2} \rangle$; note that this is a population counterpart to the empirical variance term $V_{n}$ defined in \eqref{eq:empirical-variance-definition}.

Because our proof involves classical empirical process arguments based on symmetrization and contraction, it is convenient to introduce notation commonly used in the empirical processes theory.
Recall that $P$ is the data-generating probability measure and let $P_{n}$ be its empirical counterpart; that is, $P_{n} = n^{-1}\sum_{i=1}^{n}\delta_{(X_{i},Y_{i})}$, where $\delta_{(X_{i}, Y_{i})}$ denotes the Dirac measure concentrated on the $i$-th data point. Then, for
any function $h : \mathcal{X} \times [-b, b] \to \R$, we set
$$
  Ph = \E_{(X,Y) \sim P}[h(X,Y)]
  \quad\text{and}\quad
  P_{n}h = \frac{1}{n}\sum_{i=1}^{n}h(X_{i},Y_{i}).
$$
In particular, for any probability measure $\gamma \in \probas(\Theta)$ we introduce the function $\ell_{\gamma}(X,Y) = (f_{\gamma}(X) - Y)^{2}$. Then, we have
$$
    P\ell_{\gamma} = R(\gamma)\quad\text{and}\quad P_{n}\ell_{\gamma} = R_{n}(\gamma).
$$
Finally, to simplify the notation let $\widehat{\rho} = \pq_{n}(\beta, \pi)$.
We are now ready to proceed with the proof of the theorem.
Let $\gamma$ be the minimizer of $R(\cdot) + \frac{3}{4}V(\cdot) +
\frac{3}{2\beta}\kl(\cdot, \pi_{-\frac{\beta}{3}R})$ over $\probas (\Theta)$.
Then, we have
\begin{align}
  &R(\widehat{\rho}) -
  \inf_{\gamma' \in \probas (\Theta)}
  \left\{
    \langle \gamma', R \rangle
    + \frac{3}{2\beta} \kl(\gamma', \pi_{-\frac{\beta}{3}R})
  \right\}
  \\
  &\leq
  R(\widehat{\rho}) -
  \inf_{\gamma' \in \probas (\Theta)}
  \left\{
    R(\gamma') + \frac{3}{4}V(\gamma')
    + \frac{3}{2\beta} \kl(\gamma', \pi_{-\frac{\beta}{3}R})
  \right\}
  \\
  &=R(\widehat{\rho})
  - R(\gamma)
  - \frac{3}{4}V(\gamma)
  - \frac{3}{2\beta} \kl(\gamma, \pi_{-\frac{\beta}{3}R})
  \\
  &=
  P(\ell_{\widehat{\rho}} - \ell_{\gamma})
  - \frac{3}{4}V(\gamma)
    + \frac{3}{2\beta} \Big\{
    \kl(\widehat{\rho}, \pi_{-\frac{\beta}{3}R})  - \kl(\gamma, \pi_{-\frac{\beta}{3}R}) \Big\}
  - \frac{3}{2\beta} \kl(\widehat{\rho}, \pi_{-\frac{\beta}{3}R})
  \\
  &=
  P(\ell_{\widehat{\rho}} - \ell_{\gamma})
  - \frac{3}{4}V(\gamma)
    + \frac{3}{2\beta} \Big\{ \kl(\widehat{\rho}, \pi) - \kl(\gamma, \pi) \Big\}
    + \frac{1}{2}\langle \widehat{\rho} - \gamma, R \rangle
  - \frac{3}{2\beta} \kl(\widehat{\rho}, \pi_{-\frac{\beta}{2}R})
  \\
  &=
  \frac{3}{2}P(\ell_{\widehat{\rho}} - \ell_{\gamma})
  - \frac{5}{4}V(\gamma)
  + \frac{1}{2}V(\widehat{\rho})
  - \frac{3}{2\beta} \kl(\gamma, \pi)
  + \frac{3}{2\beta} \kl(\widehat{\rho}, \pi)
  - \frac{3}{2\beta} \kl(\widehat{\rho}, \pi_{-\frac{\beta}{2}R}),
\end{align}
where the penultimate equality follows from Lemma~\ref{lemma:pb-localization}.
Adding and subtracting $\frac{3}{2}P_{n}(\ell_{\widehat{\rho}} -
\ell_{\gamma})$ we obtain the following deterministic identity:
\begin{align}
  &R(\widehat{\rho}) -
  \inf_{\gamma \in \probas (\Theta)}
  \left\{
    R(\gamma) + \frac{3}{4}V(\gamma)
    + \frac{3}{2\beta} \kl(\gamma, \pi_{-\frac{\beta}{3}R})
  \right\}
  \\
  &=
  \frac{3}{2}(P-P_{n})(\ell_{\widehat{\rho}} - \ell_{\gamma})
  - \frac{3}{2\beta} \kl(\widehat{\rho}, \pi_{-\frac{\beta}{3}R})
  - \frac{5}{4}V(\gamma)
  + \frac{1}{2}V(\widehat{\rho})
  \\
  &\quad\quad
  +\frac{3}{2}\left(
    P_{n}(\ell_{\widehat{\rho}} - \ell_{\gamma})
    - \frac{1}{\beta} \kl(\gamma, \pi)
    + \frac{1}{\beta} \kl(\widehat{\rho}, \pi)
  \right).
  \label{eq:excess-risk-decomposition}
\end{align}

Plugging in the result of
Lemma~\ref{lemma:q-aggregation-quadratic-loss-negative-term}
into \eqref{eq:excess-risk-decomposition}, and multiplying both sides by
$\frac{2}{3}$, we obtain
\begin{align}
  &
  \frac{2}{3}
  \left[
    R(\widehat{\rho}) -
    \inf_{\gamma \in \probas (\Theta)}
    \left\{
      R(\gamma) + \frac{3}{4}V(\gamma) + \frac{3}{2\beta} \kl(\gamma,
      \pi_{-\frac{\beta}{3}R})
    \right\}
  \right]
  \\
  &\leq
  (P-P_{n})(\ell_{\widehat{\rho}} - \ell_{\gamma})
  - \frac{1}{\beta} \kl(\widehat{\rho}, \pi_{-\frac{\beta}{3}R})
  - \frac{5}{6}V(\gamma)
  + \frac{1}{3}V(\widehat{\rho})
  -\frac{1}{2}\langle \widehat{\rho}, \|f - f_{\gamma}\|_{n}^{2} \rangle
  + \frac{1}{2}V_{n}(\gamma)
  \\
  &=
  (P-P_{n})(\ell_{\widehat{\rho}} - \ell_{\gamma})
  -\frac{1}{18}\langle \widehat{\rho}, \|f - f_{\gamma}\|_{n}^{2} \rangle
  - \frac{1}{18}\langle \widehat{\rho}, \|f - f_{\gamma}\|_{2}^{2}\rangle
  - \frac{1}{2\beta} \kl(\widehat{\rho}, \pi_{-\frac{\beta}{3}R})
    \label{eq:three-terms-T1}
  \\&\quad\quad +\frac{1}{3}V(\widehat{\rho})
  +\frac{1}{18}\langle \widehat{\rho}, \|f - f_{\gamma}\|_{2}^{2} \rangle
  - \frac{8}{18}\langle \widehat{\rho}, \|f - f_{\gamma}\|_{n}^{2} \rangle
  - \frac{1}{2\beta} \kl(\widehat{\rho}, \pi_{-\frac{\beta}{3}R})
    \label{eq:three-terms-T2}
  \\&\quad\quad + \frac{1}{2}V_{n}(\gamma) - \frac{5}{6}V(\gamma)
    \label{eq:three-terms-T3}.
\end{align}
It remains to obtain high-probability upper bounds on each of the three terms
\eqref{eq:three-terms-T1}, \eqref{eq:three-terms-T2} and
\eqref{eq:three-terms-T3} and use the union bound.
We show how to bound each term separately in the three lemmas stated below and
proved at the end of this section.

To bound the term \eqref{eq:three-terms-T1}, we combine Talagrand's contraction
lemma with the usual PAC-Bayes analysis based on the convex-conjugate duality
of the moment generating function and the $\kl$ divergence. This part of the analysis closely follows 
that of Lecué and Rigollet~\cite{lecue2014q-aggregation}.
\begin{lemma}
  \label{lemma:bounding-T1}
  Fix any $\gamma \in \probas (\Theta)$. Then, for any $\beta \in (0, n/(576b^{2})]$
  and for any $\delta \in (0,1)$, with probability at least $1-\delta/3$, it holds
  that
  \begin{align}
    &\sup_{\rho \in \probas (\Theta)}\left\{
      (P-P_{n})(\ell_{\rho} - \ell_{\gamma})
      -\frac{1}{18}\langle \rho, \|f - f_{\gamma}\|_{n}^{2} \rangle
      -\frac{1}{18}\langle \rho, \|f - f_{\gamma}\|_{2}^{2} \rangle
      -\frac{1}{2\beta}\kl(\rho, \pi_{-\frac{\beta}{3}R})
    \right\}
    \\
    &\lesssim
    \frac{\log(3/\delta)}{\beta}.
  \end{align}
\end{lemma}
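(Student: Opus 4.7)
The plan is to linearize the objective in $\rho$ using the identity $\ell_\rho - \ell_\gamma = \langle\rho, \ell - \ell_\gamma\rangle - V_\rho$ with $V_\rho = \int(f_\theta - f_\rho)^2 \rho(\di\theta)$, apply the PAC-Bayesian duality~\eqref{eq:pacbayes} against the localized prior $\mu = \pi_{-\beta R/3}$, and control the resulting exponential moment via a Bernstein-type MGF bound for the excess squared loss. This follows the strategy of Lecu\'{e} and Rigollet~\cite{lecue2014q-aggregation}, adapted to the localized prior, together with a final symmetrization-contraction step for the piece of the objective that remains quadratic in $\rho$.

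Using the pointwise decomposition $V_\rho = \langle\rho, (f-f_\gamma)^2\rangle - (f_\rho - f_\gamma)^2$, the empirical process term can be rewritten as
\begin{equation*}
(P-P_n)(\ell_\rho - \ell_\gamma) = \langle\rho,\, (P-P_n)(\ell - \ell_\gamma) + (P_n-P)(f-f_\gamma)^2\rangle + (P-P_n)(f_\rho - f_\gamma)^2.
\end{equation*}
The first summand is linear in $\rho$ and can be combined with the offsets $-\tfrac{1}{18}\langle \rho, \|f-f_\gamma\|_n^2\rangle$ and $-\tfrac{1}{18}\langle\rho, \|f-f_\gamma\|_2^2\rangle$; applying~\eqref{eq:pacbayes} reduces the supremum over $\rho$ to bounding $\tfrac{1}{2\beta}\log\int e^{2\beta h(\theta)} \mu(\di\theta)$, where
\begin{equation*}
h(\theta) = (P-P_n)(\ell_\theta - \ell_\gamma) + (P_n - P)(f_\theta - f_\gamma)^2 - \tfrac{1}{18}\|f_\theta - f_\gamma\|_n^2 - \tfrac{1}{18}\|f_\theta - f_\gamma\|_2^2.
\end{equation*}

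By the boundedness assumptions $|Y|, |f_\theta|, |f_\gamma| \leq b$, the random variable $\ell_\theta - \ell_\gamma$ is uniformly bounded of order $b^2$ and obeys the Bernstein variance bound $\Var(\ell_\theta - \ell_\gamma) \leq 16 b^2 \|f_\theta - f_\gamma\|_2^2$; an analogous bound holds for the centered empirical statistic $(f_\theta - f_\gamma)^2$. Bernstein's MGF inequality, valid in the regime $\beta \lesssim n/b^2$, then gives $\E e^{2\beta h(\theta)} \leq 1$ under the hypothesis $\beta \leq n/(576 b^2)$: the $\tfrac{1}{18}\|f_\theta - f_\gamma\|_2^2$ offset absorbs the variance contribution arising from the $(P - P_n)(\ell_\theta - \ell_\gamma)$ piece, while the $\tfrac{1}{18}\|f_\theta - f_\gamma\|_n^2$ offset absorbs the one arising from the $(P_n - P)(f_\theta - f_\gamma)^2$ piece. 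Exchanging the data expectation and the integral against $\mu$ by Fubini and applying Markov's inequality then yields the claimed $\lesssim \log(3/\delta)/\beta$ bound for the linear part with probability at least $1 - \delta/6$.

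The main obstacle is the quadratic-in-$\rho$ residual $(P-P_n)(f_\rho - f_\gamma)^2$, which cannot be placed under the PAC-Bayes variational identity directly. To handle it I would combine symmetrization with Talagrand's contraction principle (the map $x \mapsto x^2$ is $4b$-Lipschitz on $[-2b, 2b]$), reducing $\sup_\rho |(P-P_n)(f_\rho - f_\gamma)^2|$ minus a suitable offset to a Rademacher-averaged linear-in-$\rho$ functional of $\{f_\theta - f_\gamma : \theta \in \Theta\}$, which can then be controlled by a second PAC-Bayes argument against $\mu$. A union bound combines both pieces. The delicate step is numerical bookkeeping: the two $\tfrac{1}{18}$ coefficients must simultaneously absorb the variance drifts coming from Bernstein's inequality \emph{and} from the contraction-based quadratic control, while $\beta$ stays in the common valid range $\beta \leq n/(576 b^2)$ of both MGF applications.
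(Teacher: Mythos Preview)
Your decomposition into a linear-in-$\rho$ piece plus the quadratic residual $(P-P_n)(f_\rho-f_\gamma)^2$ is correct and could in principle be pushed through, but it is a detour around the paper's main simplification. The paper does \emph{not} split $(P-P_n)(\ell_\rho-\ell_\gamma)$ at all: it applies symmetrization and then Talagrand contraction directly to the map $\rho\mapsto\ell_\rho-\ell_\gamma$, using that $(x,y)\mapsto (f_\rho(x)-y)^2$ is $4b$-Lipschitz in $f_\rho(x)$. Contraction therefore replaces $P_n^\sigma(\ell_\rho-\ell_\gamma)$ by $4b\,P_n^\sigma(f_\rho-f_\gamma)=\langle\rho,4b\,P_n^\sigma(f-f_\gamma)\rangle$, which is already linear in $\rho$. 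One PAC-Bayes duality and one Hoeffding bound over the Rademacher signs then finish the proof; the $\tfrac{1}{18}\|f-f_\gamma\|_2^2$ offset is consumed by the symmetrization step (it becomes the ghost-sample $\|\cdot\|_{n'}^2$), and the $\tfrac{1}{18}\|f-f_\gamma\|_n^2$ offset absorbs the Hoeffding variance, with the constraint $\beta\le n/(576b^2)$ falling out exactly.

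Your route instead forces you to budget the two offsets and the $\kl$ penalty across two separate PAC-Bayes arguments, which is what makes your final ``delicate bookkeeping'' step genuinely delicate. Note also a small slip: the Bernstein MGF applied to $(P_n-P)(f_\theta-f_\gamma)^2$ produces a variance term controlled by $\|f_\theta-f_\gamma\|_2^2$, not $\|f_\theta-f_\gamma\|_n^2$, so the pairing you state is not quite right. None of this is fatal, but the paper's contraction-first approach sidesteps all of it: the quadratic residual you isolate never appears, because contraction linearizes the whole empirical process in $\rho$ in one stroke.
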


We bound the term \eqref{eq:three-terms-T2} using the relation \eqref{eq:pacbayes} combined with Bernstein's concentration inequality. 
\begin{lemma}
  \label{lemma:bounding-T2}
  Fix any $\gamma \in \probas (\Theta)$. Then, for any $\beta \in (0, n/(50b^{2})]$
  and for any $\delta \in (0,1)$, with probability at least $1-\delta/3$, it holds
  that
  \begin{align}
    \sup_{\rho \in \probas (\Theta)}\left\{
      \frac{1}{3}V(\rho)
      + \frac{1}{18}\langle \rho, \|f - f_{\gamma}\|_{2}^{2} \rangle
      - \frac{8}{18}\langle \rho, \|f - f_{\gamma}\|_{n}^{2} \rangle
      - \frac{1}{2\beta}\kl(\rho, \pi_{-\frac{\beta}{3}R})
    \right\}
    \lesssim
    \frac{\log(3/\delta)}{\beta}.
  \end{align}
\end{lemma}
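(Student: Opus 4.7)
\textbf{Plan for Lemma~\ref{lemma:bounding-T2}.} The supremand mixes a nonlinear-in-$\rho$ variance functional $\tfrac{1}{3}V(\rho)$ with three linear terms and the entropic regularizer $-\tfrac{1}{2\beta}\kl(\rho, \pi_{-\frac{\beta}{3}R})$, so my first step would be to linearize it so that Donsker--Varadhan duality applies. The population counterpart of identity~\eqref{eq:variance-identity} reads $V(\rho) + \|f_\rho - f_\gamma\|_2^2 = \langle \rho, \|f - f_\gamma\|_2^2\rangle$, so $V(\rho) \leq \langle \rho, \|f - f_\gamma\|_2^2\rangle$ for every $\gamma$, which upper bounds the supremand by
\begin{equation*}
  \langle \rho, h\rangle - \tfrac{1}{2\beta}\kl\bigl(\rho, \pi_{-\frac{\beta}{3}R}\bigr), \qquad h(\theta) := \tfrac{7}{18}\|f_\theta - f_\gamma\|_2^2 - \tfrac{8}{18}\|f_\theta - f_\gamma\|_n^2.
\end{equation*}
The crucial structural point is that the coefficient $\tfrac{8}{18}$ on the empirical quadratic term strictly exceeds the coefficient $\tfrac{7}{18}$ on its population counterpart; this gap is what will absorb the second-order Bernstein correction.

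Next, I would apply the Donsker--Varadhan duality~\eqref{eq:pacbayes} (rescaled by $\tfrac{1}{2\beta}$) to convert the supremum over $\rho$ into a log-moment generating function against the localized prior:
\begin{equation*}
  \sup_{\rho \in \probas(\Theta)}\Bigl\{\langle \rho, h\rangle - \tfrac{1}{2\beta}\kl\bigl(\rho, \pi_{-\frac{\beta}{3}R}\bigr)\Bigr\} = \tfrac{1}{2\beta}\log Z, \quad Z := \int_\Theta e^{2\beta h(\theta)}\,\pi_{-\frac{\beta}{3}R}(\di\theta).
\end{equation*}
To control $Z$ in high probability it suffices to bound $\E Z$, and by Fubini this reduces to bounding $\E[e^{2\beta h(\theta)}]$ for each fixed $\theta$. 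For such $\theta$, $n\|f_\theta - f_\gamma\|_n^2 = \sum_{i=1}^n V_i$ is a sum of i.i.d.\@ variables $V_i = (f_\theta(X_i) - f_\gamma(X_i))^2 \in [0, 4b^2]$ with mean $\|f_\theta - f_\gamma\|_2^2$. Combining the elementary inequality $e^{-x} \leq 1 - x + x^2/2$ ($x \geq 0$) with $\E V_i^2 \leq 4b^2\,\E V_i$ and tensorizing yields
\begin{equation*}
  \E\exp\!\bigl(-\tfrac{8\beta}{9}\|f_\theta - f_\gamma\|_n^2\bigr) \leq \exp\!\Bigl(-\tfrac{8\beta}{9}\bigl(1 - \tfrac{16\beta b^2}{9n}\bigr)\|f_\theta - f_\gamma\|_2^2\Bigr).
\end{equation*}

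Under the assumption $\beta \leq n/(50 b^2)$, the correction factor $\bigl(1 - \tfrac{16\beta b^2}{9n}\bigr)$ is at least $1 - \tfrac{8}{225}$, so $\tfrac{8}{9}\bigl(1 - \tfrac{16\beta b^2}{9n}\bigr) > \tfrac{7}{9}$ with room to spare; multiplying the above bound by the positive factor $e^{(7\beta/9)\|f_\theta - f_\gamma\|_2^2}$ coming from the definition of $h$ leaves a strictly negative exponent, giving $\E[e^{2\beta h(\theta)}] \leq 1$ uniformly in $\theta$. Hence $\E Z \leq 1$, and Markov's inequality gives $Z \leq 3/\delta$ with probability at least $1 - \delta/3$, implying $\tfrac{1}{2\beta}\log Z \leq \tfrac{\log(3/\delta)}{2\beta}$ as required. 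The only delicate point is the arithmetic balancing between the $\tfrac{7}{18}$ and $\tfrac{8}{18}$ coefficients on the one hand and the Bernstein correction on the other; this is what dictates the precise constant in the restriction $\beta \leq n/(50b^2)$.
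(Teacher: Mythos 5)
Your proof is correct and reaches the claimed bound, but via a genuinely different mechanism from the paper. Both proofs share the initial reductions: discarding the nonlinear part of $V(\rho)$ via $V(\rho) \leq \langle\rho, \|f - f_\gamma\|_2^2\rangle$, collecting the linear terms into a coefficient gap ($\tfrac{7}{18}$ population vs.\ $\tfrac{8}{18}$ empirical), applying the Donsker--Varadhan duality~\eqref{eq:pacbayes}, and using Fubini against the deterministic localized prior $\pi_{-\frac{\beta}{3}R}$. Where you diverge is in controlling $\E_X\bigl[\exp(-\tfrac{8\beta}{9}\|f_\theta - f_\gamma\|_n^2)\bigr]$: the paper decomposes $14\|\cdot\|_2^2 - 16\|\cdot\|_n^2 = 15(\|\cdot\|_2^2 - \|\cdot\|_n^2) - (\|\cdot\|_2^2 + \|\cdot\|_n^2)$, passes to a ghost sample, symmetrizes the $15$-term via Rademacher signs, and then applies Hoeffding's lemma coordinatewise together with $g_i^2 \leq 4b^2 g_i$. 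You instead skip the symmetrization machinery entirely and apply a one-sided Bernstein-type MGF bound to the sum of the bounded nonnegative variables $V_i = (f_\theta(X_i) - f_\gamma(X_i))^2 \in [0, 4b^2]$, using $e^{-x} \leq 1 - x + x^2/2$ for $x \geq 0$ and $\E V_i^2 \leq 4b^2\,\E V_i$. Both routes are valid; yours is more elementary and actually requires only $\beta \leq \tfrac{9n}{128 b^2}$, a weaker restriction than the paper's $\beta \leq n/(50 b^2)$ (which is exactly what the symmetrization route forces), so the stated hypothesis comfortably suffices. The paper's symmetrization approach is slightly more in the spirit of its companion Lemma~\ref{lemma:bounding-T1}, where the random response $Y$ genuinely obstructs a direct MGF computation, but for this particular lemma your direct argument is cleaner.
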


Finally, since $\gamma \in \probas (\Theta)$ is a fixed vector that does not depend
on the data, we bound \eqref{eq:three-terms-T3} via a single application of 
Bernstein's inequality.
\begin{lemma}
  \label{lemma:bounding-T3}
  Fix any $\gamma \in \probas (\Theta)$. Then,
  for any $\delta \in (0,1)$, with probability at least $1-\delta/3$ it holds
  that
  \begin{align}
    \frac{1}{2}V_{n}(\gamma) - \frac{5}{6}V(\gamma)
    \lesssim \frac{b^{2}\log(3/\delta)}{n}.
  \end{align}
\end{lemma}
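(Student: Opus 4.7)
\medskip

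The plan is to view $V_n(\gamma)$ as an empirical average of \iid random variables and apply Bernstein's inequality. Concretely, by Fubini's theorem,
\begin{equation*}
  V_n (\gamma)
  = \int_\Theta \frac{1}{n} \sum_{i=1}^n (f_\theta (X_i) - f_\gamma (X_i))^2 \, \gamma (\di \theta)
  = \frac{1}{n} \sum_{i=1}^n W_i,
  \qquad
  W_i := \int_\Theta (f_\theta (X_i) - f_\gamma (X_i))^2 \, \gamma (\di \theta).
\end{equation*}
The variables $W_i$ are \iid since $\gamma$ is deterministic (independent of the sample) and the $(X_i, Y_i)$ are \iid. The boundedness assumption~\eqref{eq:boundedassumption} gives $|f_\theta (X_i) - f_\gamma (X_i)| \leq 2 b$, so $0 \leq W_i \leq 4 b^2$ almost surely. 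Taking expectations and invoking Fubini once more, $\E [W_i] = \int_\Theta \norm{f_\theta - f_\gamma}_2^2 \, \gamma (\di \theta) = V (\gamma)$, and consequently $\Var (W_i) \leq \E [W_i^2] \leq 4 b^2 \, \E [W_i] = 4 b^2 V (\gamma)$.

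Bernstein's inequality applied to the average $V_n (\gamma) = n^{-1} \sum_{i=1}^n W_i$ then yields, with probability at least $1 - \delta/3$,
\begin{equation*}
  V_n (\gamma) - V (\gamma)
  \lesssim \sqrt{\frac{b^2 V (\gamma) \log (3/\delta)}{n}}
  + \frac{b^2 \log (3/\delta)}{n}.
\end{equation*}
Using the elementary inequality $\sqrt{a b} \leq \eta a + b/(4 \eta)$ with $a = V (\gamma)$, $b = b^2 \log (3/\delta)/n$, and choosing $\eta$ small enough that the coefficient of $V (\gamma)$ on the right is at most $2/3$, we absorb the square-root term into a multiple of $V (\gamma)$ and conclude that
\begin{equation*}
  V_n (\gamma)
  \leq \tfrac{5}{3} V (\gamma) + C \cdot \frac{b^2 \log (3/\delta)}{n}
\end{equation*}
for some absolute constant $C > 0$. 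Multiplying by $1/2$ produces the claimed inequality.

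There is no real obstacle here: the lemma concerns a fixed (data-independent) $\gamma$, so no uniform control over $\probas (\Theta)$ or PAC-Bayes/symmetrization machinery is required, unlike in Lemmas~\ref{lemma:bounding-T1} and~\ref{lemma:bounding-T2}. The only minor subtlety is checking that the factor $5/3$ (equivalently, $5/6$ after the $1/2$ rescaling) is achievable from the AM-GM step, which only needs that the resulting coefficient of $V (\gamma)$ be strictly less than $5/3$; this is comfortably met.
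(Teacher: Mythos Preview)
Your proof is correct and follows essentially the same approach as the paper: both write $V_n(\gamma)$ as an empirical mean of the \iid variables $g(X_i)=\int_\Theta (f_\theta(X_i)-f_\gamma(X_i))^2\,\gamma(\di\theta)$, bound $|g|\le 4b^2$ and $\Var(g)\le 4b^2 V(\gamma)$, and apply Bernstein's inequality. The only cosmetic difference is how the resulting square-root term is absorbed---you use AM--GM with a tunable $\eta$, whereas the paper maximizes the quadratic $x\mapsto cx - \tfrac{1}{3}x^2$---and these are equivalent manipulations.
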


The proof of Theorem~\ref{thm:pac-bayes-q-aggregation} follows by bounding the
terms \eqref{eq:three-terms-T1},~\eqref{eq:three-terms-T2} and
\eqref{eq:three-terms-T3} via
Lemma~\ref{lemma:bounding-T1},~\ref{lemma:bounding-T2} and
\ref{lemma:bounding-T3}, respectively, and applying the union bound.
\hfill\qed

\subsubsection{Proof of Lemma~\ref{lemma:bounding-T1}}
Define
\begin{align}
  \label{eq:T1-lemma-proof-U-definition}
   U
   &=
   2\beta
   \sup_{\rho \in \probas (\Theta)}\left\{
      (P-P_{n})(\ell_{\rho} - \ell_{\gamma})
      -\frac{1}{18}\langle \rho, \|f - f_{\gamma}\|_{n}^{2} \rangle
      -\frac{1}{18}\langle \rho, \|f - f_{\gamma}\|_{2}^{2} \rangle
      -\frac{1}{2\beta}\kl(\rho, \pi_{-\frac{\beta}{3}R})
   \right\}
   \\
   &=
   \sup_{\rho \in \probas (\Theta)}\left\{
      2\beta(P-P_{n})(\ell_{\rho} - \ell_{\gamma})
      -\frac{\beta}{9}\langle \rho, \|f - f_{\gamma}\|_{n}^{2} \rangle
      -\frac{\beta}{9}\langle \rho, \|f - f_{\gamma}\|_{2}^{2} \rangle
      -\kl(\rho, \pi_{-\frac{\beta}{3}R})
   \right\}.
\end{align}
Let $\sigma_{1},\dots,\sigma_{n}$ denote i.i.d.\ Rademacher signs.
And let $X_{i}',Y_{i}'$ be an independent copy of $X_{i},Y_{i}$.
Let
$$
  P_{n}^{\sigma}h = \frac{1}{n}\sum_{i=1}^{n}\sigma_{i}h(X_{i}).
$$
Throughout the remained of the proof we use $X$ and $Y$ as shorthand notations for the sequences $X_{1},\dots,X_{n}$ and $Y_{1},\dots,Y_{n}$, respectively.
Then, symmetrization and contraction gives us
\begin{align}
  &\E_{(X,Y)}\left[
    \exp\left(U\right)
  \right]
  \\
  &\leq
  \E_{(X,Y)} \E_{\sigma} \left[
    \exp\left(
     2\cdot
     \sup_{\rho \in \probas (\Theta)}\left\{
        2\beta P_{n}^{\sigma} (\ell_{\rho} - \ell_{\gamma})
        -\frac{\beta}{9}\langle \rho, \|f - f_{\gamma}\|_{n}^{2} \rangle
        -\frac{1}{2}\kl(\rho, \pi_{-\frac{\beta}{3}R})
     \right\}
   \right)
  \right]
  \\
  &\leq
  \E_{(X,Y)} \E_{\sigma} \left[
    \exp\left(
     \sup_{\rho \in \probas (\Theta)}\left\{
        4\beta P_{n}^{\sigma} (\ell_{\rho} - \ell_{\gamma})
        -\frac{2\beta}{9}\langle \rho, \|f - f_{\gamma}\|_{n}^{2} \rangle
        -\kl(\rho, \pi_{-\frac{\beta}{3}R})
     \right\}
   \right)
  \right]
  \\
  &\leq
  \E_{(X,Y)} \E_{\sigma} \left[
    \exp\left(
     \sup_{\rho \in \probas (\Theta)}\left\{
        16b\beta P_{n}^{\sigma} (f_{\rho} - f_{\gamma})
        -\frac{2\beta}{9}\langle \rho, \|f - f_{\gamma}\|_{n}^{2} \rangle
        -\kl(\rho, \pi_{-\frac{\beta}{3}R})
     \right\}
   \right)
  \right],
  \label{eq:objective-linear-in-rho}
\end{align}
where the last inequality follow via contraction principle as in
\cite[Eq.~(3.11)]{lecue2014q-aggregation}. In particular, the quadratic loss is Lipschitz over
bounded domain. That is, for any $x, y$:
\begin{align}
  &\left|
    \left(\ell_{\rho}(x,y) - \ell_{\gamma}(x,y)\right)
    - \left(\ell_{\rho'}(x,y) - \ell_{\gamma}(x,y)\right)
  \right|
  \\
  &=
  \left|
    (f_{\rho}(x) - f_{\rho'}(x))(f_{\rho}(x) + f_{\rho'}(x) - 2y)
  \right|
  \\
  &\leq
  4b
  \left|
    f_{\rho}(x) - f_{\rho'}(x)
  \right|
  \\
  &=
  4b
  \left|
    (f_{\rho}(x) - f_{\gamma}(x)) - (f_{\rho'}(x) - f_{\gamma}(x))
  \right|.
\end{align}

We are ready to apply the PAC-Bayesian machinery to
\eqref{eq:objective-linear-in-rho}. Using \eqref{eq:pacbayes}, we have 
\begin{align}
  &\E_{(X,Y)}\left[
    \exp\left(U\right)
  \right]
  \\
  &\leq
  \E_{(X,Y)} \E_{\sigma} \E_{J\sim\pi_{-\frac{\beta}{3}R}}
  \left[
    \exp\left(
      \frac{16b\beta}{n}\sum_{i=1}^{n}\sigma_{i}(f_{J}(X_{i}) - f_{\gamma}(X_{i}))
      - \frac{2\beta}{9}\|f_{J} - f_{\gamma}\|_{n}^{2}
   \right)
  \right]
  \\
  &=
  \E_{J\sim\pi_{-\frac{\beta}{3}R}} \E_{(X,Y)} \left[
    \exp\left(
        - \frac{2\beta}{9}\|f_{J} - f_{\gamma}\|_{n}^{2}
    \right)
    \E_{\sigma}
    \left[
      \exp\left(
        \frac{16b\beta}{n}\sum_{i=1}^{n}\sigma_{i}(f_{J}(X_{i}) - f_{\gamma}(X_{i}))
     \right)
    \right]
  \right].
  \label{eq:lemma-T1-step-before-hoeffding}
\end{align}
Now we will upper bound the expectation with respect to the Rademacher signs
via Hoeffding's lemma.
We have
\begin{align}
  \E_{\sigma}
  \left[
    \exp\left(
      \frac{16b\beta}{n}\sum_{i=1}^{n}\sigma_{i}(f_{J}(X_{i}) - f_{\gamma}(X_{i}))
   \right)
  \right]
  &=
  \prod_{i=1}^{n}
  \E_{\sigma_{i}}
  \left[
    \exp\left(
      \frac{16b\beta}{n}\sigma_{i}(f_{J}(X_{i}) - f_{\gamma}(X_{i}))
   \right)
  \right]
  \\
  &\leq
  \prod_{i=1}^{n}
  \exp\left(
    \frac{128 b^{2}\beta^{2}}{n^{2}}(f_{J}(X_{i}) - f_{\gamma}(X_{i})^{2})
  \right)
  \\
  &=
  \exp\left(
    \frac{128 b^{2}\beta^{2}}{n}\|f_{J} - f_{\gamma}\|_{n}^{2}
  \right).
\end{align}
Plugging the above into \eqref{eq:lemma-T1-step-before-hoeffding} yields
\begin{align}
  &\E_{(X,Y)}\left[
    \exp\left(U\right)
  \right]
  \leq
  \E_{J\sim\pi_{-\frac{\beta}{3}R}} \E_{(X,Y)} \left[
    \exp\left(
        - \frac{2\beta}{9}\|f_{J} - f_{\gamma}\|_{n}^{2}
    \right)
    \exp\left(
      \frac{128 b^{2}\beta^{2}}{n}\|f_{J} - f_{\gamma}\|_{n}^{2}
    \right)
  \right]
  \label{eq:T1-before-optimizing-lambda}.
\end{align}
For any $\beta \in (0, n/(576b^2)]$, the above is at most equal to one.
It follows that for any $\beta \in (0, n/(576b^2)]$ we have,
$$
  \E \exp(U) \leq 1
$$
and therefore, via Markov's inequality, with probability at least $1- \delta/3$,
it holds that
$$
  U \leq \log(3/\delta),
$$
which concludes the proof of this lemma.
\hfill\qed

\subsubsection{Proof of Lemma~\ref{lemma:bounding-T2}}
Define
\begin{align}
   U
   &=
   2\beta
   \sup_{\rho \in \probas (\Theta)}\left\{
     \frac{1}{3}V(\rho) + \frac{1}{18}
     \langle \rho, \|f - f_{\gamma}\|_{2}^{2} \rangle
      - \frac{8}{18}\langle \rho, \|f - f_{\gamma}\|_{n}^{2} \rangle
      - \frac{1}{2\beta}\kl(\rho, \pi_{-\frac{\beta}{3}R})
   \right\}
   \\
   &=
   \sup_{\rho \in \probas (\Theta)}\left\{
     \frac{2\beta}{3}V(\rho)
     + \frac{\beta}{9}
     \langle \rho, \|f - f_{\gamma}\|_{2}^{2} \rangle
      - \frac{8\beta}{9}\langle \widehat{\rho}, \|f - f_{\gamma}\|_{n}^{2} \rangle
      - \kl(\widehat{\rho}, \pi_{-\frac{\beta}{3}R})
   \right\}.
\end{align}
Next, observe that
$$
  V(\rho)
  =
  \langle \rho, \|f - f_{\gamma}\|_{2}^{2}\rangle
  - \|f_{\rho} - f_{\gamma}\|_{2}^{2}
  \leq
  \langle \rho, \|f - f_{\gamma}\|_{2}^{2}\rangle.
$$
Applying the above inequality, it follows that
$$
 U
 \leq
 \sup_{\rho \in \probas (\Theta)}\left\{
   \frac{7\beta}{9}
   \langle \rho, \|f - f_{\gamma}\|_{2}^{2} \rangle
    - \frac{8\beta}{9}\langle \rho, \|f - f_{\gamma}\|_{n}^{2} \rangle
    - \kl(\rho, \pi_{-\frac{\beta}{3}R})
 \right\}.
$$
Let $X$ be a shorthand for $X_{1},\dots,X_{n}$
and let $\sigma = (\sigma_{1},\dots,\sigma_{n})$ be a sequence of independent
Rademacher random signs.
Then, computing the above supremum explicitly and
applying the symmetrization inequality (cf.\ the proof of
Lemma~\ref{lemma:bounding-T1}) yields
\begin{align}
  &\E_{X}\left[
    \exp\left(U\right)
  \right]
  \\
  &=
  \E_{X} \E_{J \sim \pi_{-\frac{\beta}{3}R}}\left[
    \exp\left(
      \frac{\beta}{18}\left(
        14\|f_{J} - f_{\gamma}\|_{2}^{2}
        -16\|f_{J} - f_{\gamma}\|_{n}^{2}
      \right)
    \right)
  \right]
  \\
  &=
  \E_{J\sim\pi_{-\frac{\beta}{3}R}}
  \E_{X}\left[
    \exp\left(
      \frac{\beta}{18}\left(
        14\|f_{J} - f_{\gamma}\|_{2}^{2}
        -16\|f_{J} - f_{\gamma}\|_{n}^{2}
      \right)
    \right)
  \right]
  \\
  &=
  \E_{J\sim\pi_{-\frac{\beta}{3}R}}
  \E_{X}\left[
    \exp\left(
      \frac{\beta}{18}\left(
        15\|f_{J} - f_{\gamma}\|_{2}^{2}
        -15\|f_{J} - f_{\gamma}\|_{n}^{2}
        -\|f_{J} - f_{\gamma}\|_{2}^{2}
        -\|f_{J} - f_{\gamma}\|_{n}^{2}
      \right)
    \right)
  \right]
  \\
  &\leq
  \E_{J\sim\pi_{-\frac{\beta}{3}R}}
  \E_{X}
  \E_{\sigma}\left[
    \exp\left(
      \frac{\beta}{9}\left(
        \frac{15}{n}\sum_{i=1}^{n}\sigma_{i}(f_{J}(X_{i}) - f_{\gamma}(X_{i}))^{2}
        -\|f_{J} - f_{\gamma}\|_{n}^{2}
      \right)
    \right)
  \right]
  \\
  &=
  \E_{J\sim\pi_{-\frac{\beta}{3}R}}
  \E_{X}
    \exp\left(
      -\frac{\beta}{9}\|f_{J} - f_{\gamma}\|_{n}^{2}
    \right)
  \E_{\sigma}\left[
    \exp\left(
      \frac{15\beta}{9n}
        \sum_{i=1}^{n}\sigma_{i}(f_{J}(X_{i}) - f_{\gamma}(X_{i}))^{2}
    \right)
  \right].
\end{align}
We can now bound the expectation with respect to the Rademacher signs via
Hoeffding's inequality. In particular, we have
\begin{align}
  &\E_{X}\left[\exp(U)\right]
  \\
  &\leq
  \E_{J\sim\pi_{-\frac{\beta}{3}R}}
  \E_{X}
    \exp\left(
      -\frac{\beta}{9}\|f_{J} - f_{\gamma}\|_{n}^{2}
    \right)
  \prod_{i=1}^{n}
  \E_{\sigma_{i}}\left[
    \exp\left(
      \frac{15\beta}{9n}
      \sigma_{i}(f_{J}(X_{i}) - f_{\gamma}(X_{i}))^{2}
    \right)
  \right]
  \\
  &\leq
  \E_{J\sim\pi_{-\frac{\beta}{3}R}}
  \E_{X}
    \exp\left(
      -\frac{\beta}{9}\|f_{J} - f_{\gamma}\|_{n}^{2}
    \right)
    \prod_{i=1}^{n}
    \exp\left(
      \frac{225\beta^{2}}{162n^2}
      (f_{J}(X_{i}) - f_{\gamma}(X_{i}))^{4}
    \right)
  \\
  &\leq
  \E_{J\sim\pi_{-\frac{\beta}{3}R}}
  \E_{X}
    \exp\left(
      -\frac{\beta}{9}\|f_{J} - f_{\gamma}\|_{n}^{2}
    \right)
    \prod_{i=1}^{n}
    \exp\left(
      \frac{225\beta^{2}}{162n^2}
      \cdot 4b^{2}
      (f_{J}(X_{i}) - f_{\gamma}(X_{i}))^{2}
    \right)
  \\
  &=
  \E_{J\sim\pi_{-\frac{\beta}{3}R}}
  \E_{X}
    \exp\left(
      \left[\frac{900b^{2}\beta^{2}}{162n}
      - \frac{\beta}{9}\right]
      \|f_{J} - f_{\gamma}\|_{n}^{2}
    \right).
\end{align}
For any $\beta \in (0, n/(50b^{2})]$ the above is bounded by one.
Hence, by Markov's inequality,  for any $\delta \in (0,1)$ and
for any $\beta \in (0, n/(50b^{2})]$
we have
$$
  \mathbf{P}(U \geq \log(3/\delta)) \leq \frac{\delta}{3},
$$
which is what we wanted to show.
\hfill\qed

\subsubsection{Proof of Lemma~\ref{lemma:bounding-T3}}
Define the function $g$ by
$$
  g(X)
  =
  \langle \gamma, (f(X) - f_{\gamma}(X))^{2} \rangle
  =
  \E_{J\sim\gamma}[(f_{J}(X) - f_{\gamma}(X))^{2}].
$$
Then,
$$
  V_{n}(\gamma) = \frac{1}{n}\sum_{i=1}^{n}g(X_{i})
  \quad\text{and}\quad
  V(\gamma) = \E[g(X)].
$$
Note that
$
  |g(X)| \leq 4b^{2}
$,
and
\begin{align}
  \operatorname{Var}(g(X))
  &\leq
  \E g(X)^{2}
  = \E_{X} \left(\E_{J \sim \gamma}[(f_{J}(X) - f_{\gamma}(X))^{2}]\right)^{2}
  \\
  &\leq
  \E_{X} \E_{J \sim \gamma}[(f_{J}(X) - f_{\gamma}(X))^{4}]
  \leq
  4b^{2} \E_{X} \E_{J\sim\gamma}[(f_{J}(X) - f_{\gamma}(X))^{2}]
  \\
  &= 4b^{2} V(\gamma).
\end{align}
Hence, by Bernstein's inequality, with probability at least $1 - \delta/3$ we
have
\begin{align}
  V_{n}(\gamma) - V(\gamma)
  \leq
  \sqrt{\frac{2\cdot4b^{2} V(\gamma)\cdot \log(3/\delta)}{n}}
  +
  \frac{4b^{2}\log(3/\delta)}{3n}.
\end{align}
It follows that with probability at least $1-\delta/3$ we have
\begin{align}
  \frac{1}{2}V_{n}(\gamma) - \frac{5}{6}V(\gamma)
  &=
  \frac{1}{2}\left(V_{n}(\gamma) - V(\gamma)\right) - \frac{1}{3}V(\gamma)
  \\
  &\leq
  \frac{1}{2}\sqrt{\frac{2\cdot4b^{2} V(\gamma)\cdot \log(3/\delta)}{n}}
  - \frac{1}{3}V(\gamma)
  +
  \frac{1}{2}\frac{4b^{2}\log(3/\delta)}{3n}
  \\
  &\leq
  \sup_{x \in \R}\bigg\{
    \frac{1}{2}x\sqrt{\frac{2\cdot4b^{2}\cdot \log(3/\delta)}{n}}
    - \frac{1}{3}x^{2}
  \bigg\}
  + \frac{1}{2}\frac{4b^{2}\log(3/\delta)}{3n}
  \\
  &\lesssim
  \frac{b^{2}\log(3/\delta)}{n},
\end{align}
which is what we wanted to show.
\hfill\qed

\subsection{Proof of Theorem~\ref{thm:exchangablepriors}}
The proof follows the proof of Theorem \ref{thm:pac-bayes-q-aggregation} almost verbatim. Let us discuss the necessary changes. First, replacing $R(\cdot)$ with $R_n^{\prime}(\cdot)$, $V(\cdot)$ with $V_n^{\prime}(\cdot)$, $P$ with $P_n^{\prime}$, and $\|\cdot\|_2$ with $\|\cdot\|^{\prime}_{n}$ (the versions with primes correspond to empirical means with respect to the second half of the sample) in the first part of the proof of Theorem \ref{thm:pac-bayes-q-aggregation}, we obtain for any $\gamma \in \mathcal{P}(\Theta)$ that possibly depends on the entire sample of size $2n$,
We have
\begin{align}
  &R^{\prime}_n(\widehat{\rho}) -
 \left(
    R_n^{\prime}(\gamma) + \frac{3}{4}V_n^{\prime}(\gamma)
    + \frac{3}{2\beta} \kl(\gamma, \pi_{-\frac{\beta}{6}(R_n + R^{\prime}_n)})
  \right)
   \\
  &=P_n^{\prime}(\ell_{\widehat{\rho}} - \ell_{\gamma})
  - \frac{3}{4}V_n^{\prime}(\gamma)
    + \frac{3}{2\beta} \Big\{
    \kl(\widehat{\rho}, \pi_{-\frac{\beta}{6}(R_n + R^{\prime}_n)})  - \kl(\gamma, \pi_{-\frac{\beta}{6}(R_n + R^{\prime}_n)}) \Big\}
    \\
    &\qquad- \frac{3}{2\beta} \kl(\widehat{\rho}, \pi_{-\frac{\beta}{6}(R_n + R^{\prime}_n)})
  \\
  &=P_n^{\prime}(\ell_{\widehat{\rho}} - \ell_{\gamma})
  - \frac{3}{4}V_n^{\prime}(\gamma)
    + \frac{3}{2\beta} \Big\{ \kl(\widehat{\rho}, \pi) - \kl(\gamma, \pi) \Big\}
    \\
    &\qquad+ \frac{1}{4}\langle \widehat{\rho} - \gamma, R_{n}^{\prime} \rangle + \frac{1}{4}\langle \widehat{\rho} - \gamma, R_{n} \rangle
  - \frac{3}{2\beta} \kl(\widehat{\rho}, \pi_{-\frac{\beta}{6}(R_n + R^{\prime}_n)})
  \\
  &=
  \frac{5}{4}(P_n^{\prime} - P_n)(\ell_{\widehat{\rho}} - \ell_{\gamma})
  - V_n^{\prime}(\gamma)
  + \frac{1}{4}V_n^{\prime}(\widehat{\rho})
  - \frac{3}{2\beta} \kl(\gamma, \pi)
  + \frac{3}{2\beta} \kl(\widehat{\rho}, \pi)
  \\
  &\qquad+ \frac{3}{2}P_n(\ell_{\widehat{\rho}} - \ell_{\gamma}) + \frac{1}{4}V_n(\widehat{\rho}) - \frac{1}{4}V_n(\gamma)- \frac{3}{2\beta} \kl(\widehat{\rho}, \pi_{-\frac{\beta}{6}(R_n + R^{\prime}_n)}).
  \label{eq:lastline}
\end{align}
where the penultimate equality follows from Lemma~\ref{lemma:pb-localization}. By Lemma \ref{lemma:q-aggregation-quadratic-loss-negative-term} we have
\[
  R_{n}(\widehat{\rho}) - R_{n}(\gamma)
    - \frac{1}{\beta}\kl(\gamma, \pi)
    + \frac{1}{\beta}\kl(\widehat{\rho}, \pi)
    \leq
    -\frac{1}{2}\langle \widehat{\rho}, \|f - f_{\gamma}\|_{n}^{2} \rangle
    + \frac{1}{2}V_{n}(\gamma).
\]
Plugging this inequality into \eqref{eq:lastline}, we have
\begin{align*}
  &R^{\prime}_n(\widehat{\rho}) -
 \left(
    R_n^{\prime}(\gamma) + \frac{3}{4}V_n^{\prime}(\gamma)
    + \frac{3}{2\beta} \kl(\gamma, \pi_{-\frac{\beta}{6}(R_n + R^{\prime}_n)})
  \right)
   \\
  &\le
  \frac{5}{4}(P_n^{\prime} - P_n)(\ell_{\widehat{\rho}} - \ell_{\gamma})
  - V_n^{\prime}(\gamma)
  + \frac{1}{4}V_n^{\prime}(\widehat{\rho})
  - \frac{3}{2\beta} \kl(\widehat{\rho}, \pi_{-\frac{\beta}{6}(R_n + R^{\prime}_n)})
  \\
  &\quad +\frac{1}{2}V_n(\gamma) - \frac{3}{4}\langle \widehat{\rho}, \|f - f_{\gamma}\|_{n}^{2} \rangle + \frac{1}{4}V_n(\widehat{\rho}).
  \\
  &= \frac{5}{4}(P_n^{\prime} - P_n)(\ell_{\widehat{\rho}} - \ell_{\gamma}) - \frac{3}{16}\langle \widehat{\rho}, \|f - f_{\gamma}\|_{n}^{2} \rangle - \frac{3}{16}\langle \widehat{\rho}, (\|f - f_{\gamma}\|_{n}^{\prime})^{2} \rangle  - \frac{3}{4\beta} \kl(\widehat{\rho}, \pi_{-\frac{\beta}{6}(R_n + R^{\prime}_n)})
  \\
  &\quad+\frac{1}{4}V_n^{\prime}(\widehat{\rho})+\frac{1}{4}V_n(\widehat{\rho}) + \frac{3}{16}\langle \widehat{\rho}, (\|f - f_{\gamma}\|_{n}^{\prime})^{2}\rangle - \frac{9}{16}\langle \widehat{\rho}, \|f - f_{\gamma}\|_{n}^{2} \rangle - \frac{3}{4\beta} \kl(\widehat{\rho}, \pi_{-\frac{\beta}{6}(R_n + R^{\prime}_n)})
  \\
  &\quad+ \frac{1}{2}V_n(\gamma) - V_n^{\prime}(\gamma).
\end{align*}
As in the proof of Theorem \ref{thm:pac-bayes-q-aggregation}, we individually bound each term appearing in the last three lines. To do so, we repeat the lines of the proof of Lemma \ref{lemma:bounding-T1}, Lemma \ref{lemma:bounding-T2} and Lemma \ref{lemma:bounding-T3}. There are two observations that allow us to work with the prior distribution $\pi_{-\frac{\beta}{6}(R_n + R^{\prime}_n)}$ that depends on the entire sample of size $2n$. In fact, one can still use the symmetrization since the random measure $\pi_{-\frac{\beta}{6}(R_n + R^{\prime}_n)}$ does not change if we \say{swap} any two observations between the the first and the second half of the sample of size $2n$. Furthermore, after we apply the symmetrization step, all the randomness in our analysis comes from the Rademacher random signs. The remaining technical steps are exactly the same as in the proofs of Lemma \ref{lemma:bounding-T1}, and Lemma \ref{lemma:bounding-T2}. It is only left to modify the argument of Lemma \ref{lemma:bounding-T3}. The only difference is that $\gamma$ depends on the entire sample of size $2n$ and we cannot use the Bernstein inequality as before. Nevertheless, we can still use the exchangeability of our sample. For any $h > 0$ we have for sequence of independence Rademacher random variables (using the notation of the proof of Lemma \ref{lemma:bounding-T3})
\begin{align*}
&\E\exp\left(h n\left(\frac{1}{2}V_n(\gamma) - V_n^{\prime}(\gamma)\right)\right)
\\
&=\E\E_{\sigma}\exp\left(\frac{3h}{4}\sum\limits_{i = 1}^n\left(\sigma_i(g(X_i) - g(X_{n+i})) - \frac{1}{3}g(X_i) - \frac{1}{3}g(X_{n+i})\right)\right)
\\
&\le\E\E_{\sigma}\exp\left(\frac{3h}{2}\sum\limits_{i = 1}^n\left(\sigma_ig(X_i)-\frac{1}{3}g(X_i)\right)\right)
\\
&\le\E\exp\left(\frac{36h^2}{8}\sum\limits_{i = 1}^nb^2g(X_i)-\frac{1}{2}\sum\limits_{i = 1}^ng(X_i)\right) = 1,
\end{align*}
for $h = \frac{1}{3b}$. Here we used that $g(X) \ge 0$. By Markov's inequality, with probability at least $1 - \delta/3$,
\[
\frac{1}{2}V_n(\gamma) - V_n^{\prime}(\gamma) \le \frac{3b\log(3/\delta)}{n}.
\]
The claim follows as before.
\hfill\qed

\subsection{Proof of Theorem~\ref{thm:transductive-application}}

Before presenting the proof, we make some remarks. It appears that the existing analysis of aggregation procedures \cite{lecue2009aggregation, mendelson2017aggregation, mendelson2019unrestricted} goes through the complexity of the class formed by pairwise differences between functions in the reference class. That is, the existing upper bounds are written in term of the localized complexity of the class $\mathcal F - \mathcal F = \{f - g: f, g \in \mathcal F\}$. While for convex classes $\mathcal F - \mathcal F$ has essentially the same local complexity as $\mathcal F$ itself, the gap can be significant for non-convex classes as shown by the following example.

\begin{example}[The local complexity of the class $\mathcal F - \mathcal F$ can be larger than that of $\mathcal F$]
Consider the class of thresholds in $\mathbb{Z}$. That is, define $\mathcal F = \{x \mapsto \indic{x \ge t}: t \in \mathbb{Z}\}$. It is immediate to verify that the star number $s$ of $\mathcal F$ is equal to $2$. We further observe that the class $\mathcal F - \mathcal F$ contains the class of singletons $\mathcal G = \{x \mapsto \indic{x = t}: t \in \mathbb{Z}\}$. However, the star number of $\mathcal G$ is equal to infinity. Informally, while each function in $\mathcal F$ has at most two \say{neighbors} when restricted to a finite sample, the class $\mathcal F - \mathcal F$ is more expressive and each function has a rich neighborhood.
\end{example}
Thus, if one wants to incorporate the star number in the bound such as \eqref{eq:rakhlinsridharan}, we need to avoid the existing techniques that focus on the class $\mathcal F - \mathcal F$. Our techniques are particularly suitable for this purpose.

\begin{proof} The key point is to upper bound $\kl(\gamma, \pi_{-\frac{\beta}{6}(R_n + R^{\prime}_{n})})$ in Theorem \ref{thm:exchangablepriors}. Fix $\beta = c_1n$. Observe that our data dependent projection class $\mathcal F|_{S_{2n}} = \{(f(X_1), \ldots, f(X_{2n})): f \in \mathcal F\}$ is finite and can be seen as a subset of $\{0, 1\}^{2n}$. Moreover, its VC dimension is bounded by $d$ and its star number is bounded by $s$. Observe that $\beta(R_n(f) + R^{\prime}_{n}(f))$ is proportional to the number of mistakes that $f$ does on the sample of size $2n$. Assume that $\gamma$ puts its whole mass on any $f^{\prime} \in \mathcal F|_{S_{2n}}$ that minimizes $R_n(f) + R_n^{\prime}(f)$.
We pick one of these minimizers and denote it by $f^{\prime}$. We have
\begin{align}
&\kl(\gamma, \pi_{-\frac{\beta}{6}(R_n + R^{\prime}_{n})})
\\
&= \log\left(\sum\limits_{f \in \mathcal F|_{S_{2n}} }\exp(-c_1n(R_n(f) + R^{\prime}_{n}(f) - R_n(f^{\prime}) - R^{\prime}_n(f^{\prime}))/6)\right)
\\
&=\log\left(\sum\limits_{k = 0}^{2n}\big|\{f \in \mathcal F|_{S_{2n}}: n(R_n(f) + R^{\prime}_n(f) - R_n(f^{\prime}) - R^{\prime}_n(f^{\prime})) = k\}\big|\cdot\exp(-c_1k/6)\right)
\label{eq:localklvc}
\end{align}
We now want to bound
\[
|\{f \in \mathcal F|_{S_{2n}}: n(R_n(f) + R^{\prime}_n(f) - R_n(f^{\prime}) - R_n^{\prime}(f^{\prime})) = k\}| = (I)_k,
\]
for all $k = 0, \ldots, 2n$. Using the bound in \cite[Theorem 10]{hanneke2015minimax}, one can verify that
\begin{align*}
&\bigl|\{x \in S_{2n}: \textrm{there exist}\ f, g \in \mathcal F|_{S_{2n}}\ \textrm{such that}\ f(x) \neq g(x),
\\
&\qquad\qquad\qquad \textrm{and} \ R_n(f) + R^{\prime}_n(f) = R_n(g) + R^{\prime}_n(g) = k/n + R_n(f^{\prime}) + R^{\prime}_n(f^{\prime})\}\bigr|
\\
&\qquad\le (s + 1)(n(R_n(f^{\prime}) + R^{\prime}_n(f^{\prime})) + k).
\end{align*}
Thus, by the standard Sauer-Vapnik-Chervonenkis lemma we have
\[
(I)_k \le \left(\frac{e(s+1)(n(R_n(f^{\prime}) + R^{\prime}_n(f^{\prime})) + k)}{d}\right)^d.
\]
Plugging this into \eqref{eq:localklvc} we have for some $c_2, c_3,c_4,c_5 > 0$,
\begin{align*}
\kl(\gamma, \pi_{-\frac{\beta}{6}(R_n + R^{\prime}_{n})}) &\le \log\left(\sum\limits_{k = 0}^{2n} \left(\frac{e(s+1)(n(R_n(f^{\prime}) + R^{\prime}_n(f^{\prime})) + k)}{d}\right)^d\exp(-c_1k/6)\right)
\\
&\le  \log\Bigg(c_2\left(e(s+1)\right)^d \Bigg( \left(\frac{c_3n(R_n(f^{\prime}) + R^{\prime}_n(f^{\prime}) + d)}{d} \right)^d
\\
&\qquad+ \sum\limits_{k = c_4(R_n(f^{\prime}) + R^{\prime}_n(f^{\prime})) + d}^{\infty}\exp(c_5d \log(k/d) - c_1k/6)\Bigg)\Bigg).
\end{align*}
Using $\log(k/d) \le k/d$ for large enough $k$, it is easy to see that the last summand does not play any role and is absorbed by absolute constants.
Finally, we use the standard relative Vapnik-Chervonenkis bound \cite{vapnik1974theory} to prove that, with probability at least $1 - \delta$,
\[
R_n(f^{\prime}) + R^{\prime}_n(f^{\prime}) \le 4\inf\limits_{f \in \mathcal F}R(f) + \frac{c(d\log(en/d) + \log(1/\delta))}{n}.
\]
where $c > 0$ is some absolute constant. Combining the bounds and using the union bound, we obtain the bound 
\[
   \P\left(
      R^{\prime}_{n}(\widehat{\rho}) - \min\limits_{f \in \F}R^{\prime}_{n}(f)
      \geq c_6\left(\frac{d}{n}\log\left(\frac{es\left(nR(f^{\star}) + d\log(\frac{n}{d}) + \log(\frac{4}{\delta})\right)}{d}\right)
      + \frac{\log(\frac{4}{\delta})}{n}\right)
    \right)
    \leq \delta,
    \]
    where $c_6 > 0$ is some constant.
Using \cite[Lemma 20]{hanneke2016refined}, we simplify the expression under the logarithm and conclude the proof.
\end{proof}

\subsection{Proofs from Section~\ref{sec:gauss-priors}}
\label{sec:rema-proofs-gaussian}

We introduce additional notation.
Let $\norm{x}_{A} = \langle A x, x\rangle^{1/2}$ for $A$ a positive semi-definite matrix and $x \in \R^d$. In the random-design setting, %
letting $\theta^\star = \argmin_{\theta \in \R^d} R (f_\theta)$, we have $R (f_\theta) - R (f_{\theta^\star}) %
= \E [ (f_\theta (X) - f_{\theta^\star} (X))^2 ]
= \norm{\theta - \theta^\star}_\Sigma^2
$ for every $\theta \in \R^d$.
Likewise, in the fixed-design setting, $R (f_\theta) - R(f_{\theta^\star})
= \norm{\theta - \theta^\star}_{\wh \Sigma_n}^2
$
Recall, for $\lambda > 0$, the optimal regularized parameter is given by
\begin{equation*}
  \theta_\lambda
  = \argmin_{\theta \in \R^d} \big\{ R (\theta) + \lambda \norm{\theta}^2 \big\}
  = \argmin_{\theta \in \R^d} \big\{ \norm{\theta - \theta^\star}_\Sigma^2 + \lambda \norm{\theta}^2 \big\}
  = (\Sigma + \lambda I_d)^{-1} \Sigma \theta^\star
  ,
\end{equation*}
such that %
\[
  \inf_{\theta \in \R^d} \big\{ R (\theta) + \lambda \norm{\theta}^2 \big\} - R (\theta^\star)
  = R (\theta_\lambda) - R (\theta^\star) + \lambda \norm{\theta_\lambda}^2
    = \lambda \innerp{(\Sigma + \lambda I_d)^{-1} \Sigma \theta^\star}{\theta^\star},
    \]
    and
    \[
  R (\theta_\lambda) - R (\theta^\star)
  = \lambda^2 \innerp{(\Sigma + \lambda I_d)^{-2} \Sigma \theta^\star}{\theta^\star}
    .
\]
We use these expressions in what follows.

\subsubsection{Proof of Proposition~\ref{prop:local-global-gaussian}}
\label{sec:proof-gaussian-complexities}

Letting $\lambda = \gamma/\beta$, we have for any $\theta \in \R^d$,
\begin{align*}
  &\beta R (f_{\theta}) + \gamma \norm{\theta}^2
    = \beta \big\{ R (\theta) + \lambda \norm{\theta}^2 \big\} %
  = \beta \big\{ \innerp{(\Sigma + \lambda I_d) (\theta - \theta_\lambda)}{\theta - \theta_\lambda} + R (\theta_\lambda) + \lambda \norm{\theta_\lambda}^2 \big\}
\end{align*}
where the last equality stems from the fact that the two functions of $\theta$ are quadratic, with the same Hessian, minimizer and minimum.
As a result, denoting by $\pi_\beta$ the distribution $\gaussdist (\theta_\lambda, \beta^{-1} (\Sigma + \lambda I_d)^{-1}) = \gaussdist (\theta_\lambda, (\beta \Sigma + \gamma I_d)^{-1})$, we have
\begin{align*}
  e^{-\beta R (\theta)/2} \pi (\di \theta)
  &= \frac{e^{- (\beta R (f_{\theta}) + \gamma \norm{\theta}^2)/2} \di \theta}{({2\pi} \gamma^{-1})^{d/2}} \\
  &= e^{-\beta \{ R (\theta_\lambda) + \lambda \norm{\theta_\lambda}^2 \}/2} \frac{(2\pi)^{d/2} \det [ (\beta \Sigma + \gamma I_d)^{-1} ]^{1/2}}{({2\pi} \gamma^{-1})^{d/2}} \pi_\beta (\di \theta) \\
  &= e^{-\beta \{ R (\theta_\lambda) + \lambda \norm{\theta_\lambda}^2 \}/2} \det \big( I_d + {\lambda^{-1}}\Sigma \big)^{-1/2} \pi_\beta (\di \theta)
    .
\end{align*}
This implies that the global complexity writes
\begin{align*}
  - \frac{2}{\beta} \log \bigg( \int_{\R^d} e^{- \beta R (\theta)/2} \pi (\di \theta) \bigg)
  &= - \frac{2}{\beta} \log \bigg( e^{-\beta \{ R (\theta_\lambda) + \lambda \norm{\theta_\lambda}^2 \}/2} \det \big( I_d + {\lambda^{-1}}\Sigma \big)^{-1/2} \bigg) \\
  &= R (\theta_\lambda) + \lambda \norm{\theta_\lambda}^2 + \beta^{-1} \log \det (I_d + \lambda^{-1} \Sigma)
    .
\end{align*}
In addition, the local complexity writes
\begin{align*}
  \frac{\int_{\R^d} R (\theta) e^{- \beta R (\theta)/2} \pi (\di \theta)}{\int_{\R^d} e^{- \beta R (\theta)/2} \pi (\di \theta)}
  &= \int_{\R^d} R (\theta) \pi_\beta (\di \theta) \\
  &= R (\theta^\star) + \int_{\R^d} \langle \Sigma (\theta - \theta^\star), \theta - \theta^\star\rangle \pi_\beta (\di \theta) \\
  &= R (\theta^\star) + \norm{\theta_\lambda - \theta^\star}_\Sigma^2 + \tr \bigg\{ \Sigma \int_{\R^d} (\theta - \theta_\lambda) (\theta - \theta_\lambda)^{\top} \pi_{\beta} (\di \theta) \bigg\} \\
  &= R (\theta_\lambda) + \beta^{-1} \, \tr [ (\Sigma + \lambda I_d)^{-1} \Sigma ]
    .
\end{align*}

We conclude with the proof of the two inequalities of Proposition~\ref{prop:local-global-gaussian}.
Let $\lambda_1 \geq \dots \geq \lambda_d \geq 0$ denote the ordered eigenvalues of $\Sigma$, counted with multiplicity, so that $\lambda_1 = \opnorm{\Sigma}$ and
\begin{align*}
  \tr \big[ (\Sigma + \lambda I_d)^{-1} \Sigma \big]
  &= \sum_{i=1}^d \frac{\lambda_i}{\lambda_i + \lambda}
    = \sum_{i=1}^d u_i , \\
  \log \det \big( I_d + \lambda^{-1} \Sigma \big)
  &= \sum_{i=1}^d \log \big( 1 + \lambda_i/\lambda \big)
    = \sum_{i=1}^d - \log (1 - u_i),
\end{align*}
where $u_i = \lambda_i / (\lambda_i + \lambda) \in [0, 1)$.
The bound $\tr [ (\Sigma + \lambda I_d)^{-1} \Sigma ] \leq \log \det (I_d + \lambda^{-1} \Sigma)$ then follows from the inequality $u_i \leq - \log (1 - u_i)$; the reverse inequality follows from the fact that, by convexity of $u \mapsto - \log (1-u)$ and since $u_i \leq u_1 \leq 1/2$ (by the assumption $\lambda \leq \lambda_1$)
\begin{equation*}
  \frac{- \log (1-u_i)}{u_i}
  \leq \frac{- \log (1 - u_1)}{u_1}
  \leq - 2 \log (1 - u_1)
  = 2 \log \bigg( 1 + \frac{\lambda_1}{\lambda} \bigg)
  .
\end{equation*}
The claim follows. \qed

\subsubsection{Proof of Proposition~\ref{prop:q-aggregation-gaussian}}
\label{sec:proof-gaussian-q-agg}

By the representer theorem (Proposition~\ref{prop:representer}), there exist $a_1, \dots, a_n, b_1, \dots, b_n, c_0 \in \R$ such that
\begin{equation*}
  \frac{\di \rho_n}{\di \theta} (\theta)
  = \frac{\di \pi}{\di \theta} (\theta) \frac{\di \rho_n}{\di \pi} (\theta)
  = (2\pi \gamma^{-1})^{-d/2} \exp \bigg\{ - \frac{\gamma \norm{\theta}^2}{2} + \sum_{i=1}^n \big( a_i \langle \theta, X_i\rangle^2 + b_i \innerp{\theta}{X_i} \big) + c_0 \bigg\}
\end{equation*}
so that $\wh \rho_n$ is a Gaussian distribution on $\R^d$.
We write $\wh \rho_n = \gaussdist (\mu_n, \Gamma_n)$ for $\mu_n \in \R^d$ and $\Gamma_n \in \R^{d\times d}$ a positive symmetric matrix.

Now, for a Gaussian distribution $\rho = \gaussdist (\mu, \Gamma)$, we have $f_\rho = f_\mu$ (by linearity of $f_\theta$ in $\theta$ and since $\int_{\R^d} \theta \rho (\di \theta) = \mu$), so that
\begin{equation*}
  R_n (\rho)
  = R_n (\mu)
  .
\end{equation*}
In addition, since $R_n$ is quadratic, we have
\begin{equation*}
  \langle \rho, R_n\rangle
  = \int_{\R^d} \big\{ R_n (\mu) + \innerp{\nabla R_n (\mu)}{\theta - \mu} + \innerp{\wh \Sigma_n (\theta - \mu)}{\theta - \mu} \big\} \rho (\di \theta)
  = R_n (\mu) + \tr (\wh \Sigma_n \Gamma )
  .
\end{equation*}
Finally, the Kullback-Leibler divergence writes:
\begin{equation*}
  \kll{\rho}{\pi}
  = \frac{1}{2} \big\{ - \log \det (\gamma \Gamma)
  + \tr ( \gamma \Gamma) - d + \gamma \norm{\mu}^2 \big\}
  .
\end{equation*}
Since $\wh \rho_n$ minimizes the $Q$-aggregation functional (with inverse temperature $\beta/2$) over Gaussian measures, by the previous computations and the definition~\eqref{eq:q-aggregation-estimator-definition} we have
\begin{align*}
  (\mu_n, \Gamma_n)
  &= \argmin_{\mu, \Gamma} \Big\{ (1-\alpha) \big[ R_n (\mu) + \tr ( \wh \Sigma_n \Gamma ) \big] + \alpha R_n (\mu) + \\
  &\qquad + \frac{1}{2\cdot \beta/2} \big\{ - \log \det (\gamma \Gamma) + \tr ( \gamma \Gamma) - d + \gamma \norm{\mu}^2 \big\}  \Big\} \\
  &= \argmin_{\mu, \Gamma} \Big\{ R_n (\mu) + (1-\alpha) \tr ( \wh \Sigma_n \Gamma) + \beta^{-1} \big\{ - \log \det (\gamma \Gamma) + \tr ( \gamma \Gamma) + \gamma \norm{\mu}^2 \big\}  \bigg\}
    .
\end{align*}
Since the terms in $\mu$ and $\Gamma$ are decoupled, we have, recalling that $\lambda = \gamma/\beta$,
\begin{align}
  \label{eq:q-aggregation-mean}
  \mu_n
  &= \argmin_{\mu \in \R^d} \big\{ R_n (\mu) + \lambda \norm{\mu}^2 \big\}
    = \wh \theta_\lambda  \\
  \label{eq:q-aggregation-covariance}
  \Gamma_n
  &= \argmin_{\Gamma} \beta^{-1} \big\{ - \log \det (\gamma \Gamma) + \tr \big[ \big( (1-\alpha) \beta \wh \Sigma_n + \gamma I_d \big) \Gamma \big] \big\} \nonumber \\
  &= \big( (1-\alpha) \beta \wh \Sigma_n + \gamma I_d \big)^{-1} \cdot \argmin_{\Gamma'} \big\{ -\log \det \Gamma' + \tr (\Gamma') - d \big\} \nonumber \\
  &= \big( (1-\alpha) \beta \wh \Sigma_n + \gamma I_d \big)^{-1}
\end{align}
where the penultimate inequality is obtained by making the change of variable $\Gamma = \big( (1-\alpha) \beta \wh \Sigma_n + \gamma I_d \big)^{-1} \Gamma'$ (and ignoring the $\log \det$ term that appears that does not depend on $\Gamma'$), while the last equation follows from the fact that $\tr (\Gamma') - d - \log \det (\Gamma') \geq 0$ for any symmetric positive matrix, with equality if and only if $\Gamma' = I_d$ (by applying the corresponding scalar inequality to each of the eigenvalues of $\Gamma'$).

In particular, since $f_{\wh \rho_n} = f_{\mu_n}$, the identity~\eqref{eq:q-aggregation-mean} implies that $f_{\wh \rho_n} = f_{\wh \theta_n}$ corresponds to ridge regression (irrespective of $\alpha \in [0, 1]$). The claim follows. \qed

\subsubsection{Proof of Theorem~\ref{thm:improper-ridge}}
\label{sec:proof-truncated}

Let $(X_1, Y_1), \dots, (X_{n+1}, Y_{n+1})$ be $n+1$ \iid samples from $P$.
For any regression procedure $f_n : (\R^d \times \R)^n \times \R^d \to \R$ and $1 \leq i \leq n+1$, denote $\wh f_n^{(i)} (x) = f_n \big( (X_j,Y_j)_{1 \leq j \leq n+1, j\neq i}, x \big)$; in particular, let $\wh f_n = \wh f_n^{(n+1)}$.
By exchangeability of the distribution of the sample $(X_1, Y_1), \dots, (X_{n+1},Y_{n+1})$, we have
\begin{align*}
  \E [ R (\wh f_n) ]
  &= \E [ (\wh f_n^{(n+1)} (X_{n+1}) - Y_{n+1})^2 ]
    = \E \bigg[ \frac{1}{n+1} \sum_{i=1}^{n+1} (\wh f_{n}^{(i)} (X_i) - Y_i)^2 \bigg]
    .
\end{align*}

In addition, define $ R_{n+1} (f) = (n+1)^{-1} \sum_{i=1}^{n+1} (f (X_i) - Y_i)^2$ for $f : \R^d \to \R$, and $R_{n+1} (\theta) =  R_{n+1} (f_\theta)$ for $\theta \in \R^d$.
Letting $\wh \theta_{\lambda,n+1} = \argmin_{\theta \in \R^d} \{  R_{n+1} (\theta) + \lambda \norm{\theta}^2 \}$, we have
\begin{align*}
  \inf_{\theta \in \R^d} \big\{ R (\theta) + \lambda \norm{\theta}^2 \big\}
  &= R (\theta_\lambda) + \lambda \norm{\theta_\lambda}^2 \\
  &= \E \big[  R_{n+1} (\theta_\lambda) + \lambda \norm{\theta_\lambda}^2 \big] \\
  &\geq \E \big[  R_{n+1} (\wh \theta_{\lambda, n+1}) + \lambda \norm{\wh \theta_{\lambda, n+1}}^2 \big] \\
  &\geq \E \big[  R_{n+1} (\wh \theta_{\lambda, n+1}) \big]
    .
\end{align*}
Combining the previous two inequalities gives
\begin{equation}
  \label{eq:proof-loo-bound}
  \E [ R (\wh f_n) ] - \inf_{\theta \in \R^d} \big\{ R (\theta) + \lambda \norm{\theta}^2 \big\}
  \leq \E \bigg[ \frac{1}{n+1} \sum_{i=1}^{n+1} \big\{ (\wh f_{n}^{(i)} (X_i) - Y_i)^2 - (\innerp{\wh \theta_{\lambda, n+1}}{X_i} - Y_i)^2 \big\} \bigg]
  .
\end{equation}
Now, for $i = 1, \dots, n+1$, denoting 
\[
h_i = \left\langle \left(\sum\nolimits_{j=1}^{n+1} X_j X_j^\top + \lambda (n+1) I_d\right)^{-1} X_i, X_i\right\rangle
\]
and $\theta_i = \wh \theta_{\lambda'}^{(i)}$, we have by Lemma~\ref{lem:update-leverage} below (with $A = \sum_{j\neq i} X_j X_j^\top + \lambda (n+1) I_d$, $\theta = \theta_i$ and $(x,y) = (X_i,Y_i)$) that
\begin{equation}
  \label{eq:proof-updated-loss}
  (\innerp{\wh \theta_{\lambda, n+1}}{X_i} - Y_i)^2
  = (1 - h_i)^2 (\innerp{\theta_i}{X_i} - Y_i)^2
  .
\end{equation}
We now separately analyze the estimators $\wh f_\lambda, \wh g_{\lambda, b}$ and $\wh g_{\lambda}$.

We start with $\wh f_\lambda$ which satisfies $\wh f_\lambda^{(i)} (X_i) = (1 - h_i)^2 \langle \theta_i, X_i\rangle$.
One therefore has, by~\eqref{eq:proof-updated-loss} and using that $0 \leq h_i \leq 1$, %
\begin{align*}
  & (\wh f_{n}^{(i)} (X_i) - Y_i)^2 - (\innerp{\wh \theta_{\lambda, n+1}}{X_i} - Y_i)^2 \\
  &= \big[ (1- h_i)^2 \innerp{\theta_i}{X_i} - Y_i \big]^2 - (1-h_i)^2 \big[ \innerp{\theta_i}{X_i} - Y_i \big]^2 \\
  &= - (1-h_i)^2 \big\{ 1 - (1-h_i)^2 \big\} \innerp{\theta_i}{X_i}^2 + \big\{ 1 - (1 - h_i)^2 \big\} Y_i^2 \\
  &\leq \big\{ 2 h_i - h_i^2 \big\} Y_i^2
    \leq 2 b^2 h_i
    .
\end{align*}
Plugging this inequality into~\eqref{eq:proof-loo-bound} gives:
\begin{equation*}
  \E [ R (\wh f_\lambda) ] - \inf_{\theta \in \R^d} \big\{ R (\theta) + \lambda \norm{\theta}^2 \big\}
  \leq \frac{2 b^2}{n+1} \, \E \bigg[ \sum_{i=1}^{n+1} h_i \bigg]
  .
\end{equation*}
In addition, denoting $\wh \Sigma_{n+1} = (n+1)^{-1} \sum_{i=1}^{n+1} X_i X_i^\top$,
\begin{align}
  \E \bigg[ \sum_{i=1}^{n+1} h_i \bigg]
  &= \E \bigg[ \sum_{i=1}^{n+1} \bigg\langle \bigg( \sum_{j=1}^{n+1} X_j X_j^\top + \lambda (n+1) I_d \bigg)^{-1} X_i, X_i \bigg\rangle \bigg] \nonumber \\
  &= \E \, \tr \bigg\{ \bigg( \sum_{j=1}^{n+1} X_j X_j^\top + \lambda (n+1) I_d \bigg)^{-1} \sum_{i=1}^{n+1} X_i X_i^\top \bigg\} \nonumber \\
  &= \E \,\tr \big\{ \big( \wh \Sigma_{n+1} + \lambda I_d \big)^{-1} \wh \Sigma_{n+1} \big\} \nonumber \\
  &\leq \tr \big[ (\Sigma + \lambda I_d)^{-1} \Sigma \big], \label{eq:proof-trace-concave}
\end{align}
where the last step is by
  convexity of $A \mapsto \tr (A^{-1})$ over positive semi-definite matrices \cite[Lemma~2.7]{carlen2010trace}, together with $\E [\wh \Sigma_{n+1}] = \Sigma$.
  This implies the bound~\eqref{eq:ridge-fw}.

  We now consider the truncated ridge estimator $\wh g_\lambda$, such that $\wh g_\lambda^{(i)} (X_i) = \psi_b (\langle \theta_i, X_i\rangle) = U_i \in [-b, b]$.
  Note that since $-b \leq Y_i \leq b$, we have $(Y_i - U_i)^2 %
  \leq (Y_i - \innerp{\theta_i}{X_i})^2$ and thus
  \begin{align*}
    (\wh g_{\lambda, b}^{(i)} (X_i) - Y_i)^2 - (\innerp{\wh \theta_{\lambda, n+1}}{X_i} - Y_i)^2
    &= (U_i - Y_i)^2 - (1-h_i)^2 \big( \innerp{\theta_i}{X_i} - Y_i \big)^2 \\
    &\leq (U_i - Y_i)^2 - \big[ (1-h_i) (U_i - Y_i) \big]^2 \\
    &= h_i (2 - h_i) (U_i - Y_i)^2 \\
    &\leq 8 b^2 h_i~,
  \end{align*}
  as $|U_i - Y_i| \leq 2 b$.
  Plugging this into the bound~\eqref{eq:proof-loo-bound} together with~\eqref{eq:proof-trace-concave} establishes~\eqref{eq:truncated-ridge-bound}.

  Finally, we conclude with the adaptive truncated ridge estimator $\wh g_\lambda$.
  Let $b_i = \max_{1 \leq j \leq n+1, \, j \neq i} |Y_j|$, so that $\wh g^{(i)}_\lambda (X_i) = \psi_{b_i} (\innerp{\theta_i}{X_i})$.
  Let $E_i$ denote the event $\{ |Y_i| > b_i \}$, such that $\P (E_i) \leq 1/(n+1)$ by exchangeability.
  Under $E_i^c$, we have $(Y_i - \psi_{b_i} (\innerp{\theta_i}{X_i}) )^2 \leq (Y_i - \innerp{\theta_i}{X_i})^2$, and the same argument as before shows that
  \begin{equation*}
    (\wh g_{\lambda, b}^{(i)} (X_i) - Y_i)^2 - (\innerp{\wh \theta_{\lambda, n+1}}{X_i} - Y_i)^2
    \leq 8 b_i^2 h_i \leq 8 b^2 h_i^2
    .
  \end{equation*}
  Under $E_i$, one can always bound $(\wh g_{\lambda, b}^{(i)} (X_i) - Y_i)^2 \leq 4 b^2$.
  The constant can be improved as follows: if $\innerp{\theta_i}{X_i}$ and $Y_i$ are of opposite signs, then again $(Y_i - \psi_{b_i} (\innerp{\theta_i}{X_i}) )^2 \leq (Y_i - \innerp{\theta_i}{X_i})^2$ so the bound $8 b^2 h_i^2$ still holds; if they are of the same sign, then $(Y_i - \psi_{b_i} (\innerp{\theta_i}{X_i}) )^2 \leq b^2$.
  As a result, we have in all cases:
  \begin{equation*}
    (\wh g_{\lambda, b}^{(i)} (X_i) - Y_i)^2 - (\innerp{\wh \theta_{\lambda, n+1}}{X_i} - Y_i)^2
    \leq 8 b^2 h_i^2 + b^2 \bm 1_{E_i}
    .
  \end{equation*}
Plugging this into the bound~\eqref{eq:proof-loo-bound}, together with the inequality~\eqref{eq:proof-trace-concave} and the bound $\P (E_i) \leq 1/(n+1)$ gives the claimed guarantee~\eqref{eq:adaptive-truncated-bound}.

\begin{lemma}
  \label{lem:update-leverage}
  Let $A$ be a symmetric positive $d \times d$ matrix, $b \in \R^d$ and $\theta = A^{-1} b$.
  Let $(x, y) \in \R^d \times \R$, and define $b' = b+ y x$, $A'  = A + x x^\top$ and $\theta' = A'^{-1} b'$.
  Define $h = \langle A'^{-1} x, x\rangle$.
  Then, we have
  \begin{equation}
    \label{eq:update-leverage}
    \innerp{\theta'}{x} - y
    = (1 - h) \big( \innerp{\theta}{x} - y \big)
    .
  \end{equation}
\end{lemma}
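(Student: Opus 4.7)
The plan is to avoid any explicit inversion formula (Sherman--Morrison is not needed) and instead derive an update identity for $\theta'$ in terms of $\theta$ by exploiting the algebraic relation $A = A' - x x^\top$. Once the update is in hand, taking an inner product with $x$ yields the claim in one line.

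First I would rewrite $b$ in terms of $A'$ and $\theta$: since $A = A' - xx^\top$, we have
\begin{equation*}
  b = A\theta = A' \theta - x \innerp{x}{\theta}.
\end{equation*}
Adding $yx$ to both sides gives
\begin{equation*}
  b' = b + yx = A' \theta + (y - \innerp{x}{\theta})\, x.
\end{equation*}
Applying $A'^{-1}$ to both sides then gives the rank-one update formula
\begin{equation*}
  \theta' = \theta + (y - \innerp{x}{\theta})\, A'^{-1} x.
\end{equation*}

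Taking the inner product of this with $x$ and using $h = \innerp{A'^{-1}x}{x}$ yields
\begin{equation*}
  \innerp{\theta'}{x} = \innerp{\theta}{x} + (y - \innerp{\theta}{x})\, h,
\end{equation*}
and subtracting $y$ from both sides gives
\begin{equation*}
  \innerp{\theta'}{x} - y = (1-h)\,(\innerp{\theta}{x} - y),
\end{equation*}
which is exactly~\eqref{eq:update-leverage}.

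There is essentially no obstacle here: the only thing to notice is that $A'$ (not $A$) is the natural matrix to express $b$ through, because $\theta'$ is being compared to $\theta$ at the point $x$ and the leverage $h$ is defined using $A'^{-1}$. Once one writes $A = A' - xx^\top$, the identity falls out immediately, and no assumption on the sign of $h$ or invertibility beyond that of $A$ (hence $A'$) is needed.
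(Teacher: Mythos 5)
Your proof is correct and takes a genuinely different — and arguably cleaner — route than the paper. The paper's proof invokes the Sherman--Morrison identity to compute $A'^{-1}x = (1-h)A^{-1}x$ and then expands $\langle\theta', x\rangle$ from the definitions directly. You instead never invert a rank-one perturbation: you rewrite $b = A\theta = A'\theta - \langle x,\theta\rangle x$, add $yx$, and apply $A'^{-1}$ to obtain the residual-update formula $\theta' = \theta + (y - \langle\theta,x\rangle)A'^{-1}x$, from which the claim drops out by taking the inner product with $x$. What this buys you is that the argument only uses the algebraic splitting $A = A' - xx^\top$ and the definition $h = \langle A'^{-1}x, x\rangle$, with no need to know or derive the Sherman--Morrison formula; it also exposes the familiar ``online'' update of the ridge estimator as the central object, which is perhaps more informative than the matrix-identity computation. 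The paper's route, on the other hand, produces the auxiliary identity $A'^{-1}x = (1-h)A^{-1}x$ as a byproduct, which could be reusable elsewhere, but for proving this particular lemma your argument is shorter and more elementary.
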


\begin{proof}
  It follows from the Sherman-Morrison identity that
  \begin{equation}
    \label{eq:sherman-leverage}
    A'^{-1} x
    = \Big( A^{-1} - \frac{A^{-1} x x^\top A^{-1}}{1 + \langle A^{-1} x, x\rangle} \Big) x
    = \frac{A^{-1} x}{1 + \langle A^{-1} x, x\rangle},
  \end{equation}
  so that, in particular, $h = \innerp{A'^{-1} x}{x} = \langle A^{-1} x, x\rangle / (1 + \langle A^{-1}x, x\rangle)$ (or alternatively $\innerp{A^{-1} x}{x} = h/(1-h)$), and thus~\eqref{eq:sherman-leverage} writes as $A'^{-1} x = (1 - h) A^{-1} x$.
  This implies that
  \begin{align*}
    \langle \theta', x\rangle - y
    &= \innerp{A'^{-1} b'}{x} - y \\
    &= \innerp{A'^{-1} x}{b + y x} - y \\
    &= (1-h) \innerp{A^{-1} x}{b + y x} - y \\
    &= (1-h) \innerp{A^{-1} b}{x} + (1 - h) \innerp{A^{-1} x}{x} y - y \\
    &= (1-h) (\innerp{\theta}{x} - y)
      .
  \end{align*}
The claim follows.
\end{proof}


\begin{thebibliography}{}

\end{thebibliography}


\begin{thebibliography}{10}

\bibitem{alquier2008pac}
P.~Alquier.
\newblock {PAC-Bayesian} bounds for randomized empirical risk minimizers.
\newblock {\em Mathematical Methods of Statistics}, 17:279--304, 2008.

\bibitem{alquier2021user}
P.~Alquier.
\newblock User-friendly introduction to {PAC}-{B}ayes bounds.
\newblock {\em arXiv preprint arXiv:2110.11216}, 2021.

\bibitem{anthony2009neural}
M.~Anthony and P.~L. Bartlett.
\newblock {\em Neural Network Learning: Theoretical Foundations}.
\newblock Cambridge University Press, 2009.

\bibitem{aronszajn1950theory}
N.~Aronszajn.
\newblock Theory of reproducing kernels.
\newblock {\em Transactions of the American Mathematical Society},
  68(3):337--404, 1950.

\bibitem{audibert2004aggregated}
J.-Y. Audibert.
\newblock Aggregated estimators and empirical complexity for least square
  regression.
\newblock {\em {Annales de l'Institut Henri Poincar{\'e}}, Probabilit{\'e}s et
  Statistiques}, 40(6):685--736, 2004.

\bibitem{audibert2004phd}
J.-Y. Audibert.
\newblock {\em {PAC-Bayesian} statistical learning theory}.
\newblock PhD thesis, Universit{\'e} Paris VI, 2004.

\bibitem{audibert2008deviation}
J.-Y. Audibert.
\newblock Progressive mixture rules are deviation suboptimal.
\newblock In {\em Advances in Neural Information Processing Systems 20}, pages
  41--48, 2008.

\bibitem{audibert2009fastrates}
J.-Y. Audibert.
\newblock Fast learning rates in statistical inference through aggregation.
\newblock {\em The Annals of Statistics}, 37(4):1591--1646, 2009.

\bibitem{audibert2010linear}
J.-Y. Audibert and O.~Catoni.
\newblock Linear regression through {PAC-Bayesian} truncation.
\newblock {\em Preprint arXiv:1010.0072}, 2010.

\bibitem{audibert2011robust}
J.-Y. Audibert and O.~Catoni.
\newblock Robust linear least squares regression.
\newblock {\em The Annals of Statistics}, 39(5):2766--2794, 2011.

\bibitem{azoury2001relative}
K.~S. Azoury and M.~K. Warmuth.
\newblock Relative loss bounds for on-line density estimation with the
  exponential family of distributions.
\newblock {\em Machine Learning}, 43(3):211--246, 2001.

\bibitem{barron1987bayes}
A.~R. Barron.
\newblock Are {Bayes} rules consistent in information?
\newblock In {\em Open Problems in Communication and Computation}, pages
  85--91. Springer, 1987.

\bibitem{bellec2018optimal}
P.~C. Bellec.
\newblock Optimal bounds for aggregation of affine estimators.
\newblock {\em The Annals of Statistics}, 46(1):30--59, 2018.

\bibitem{blanchard2018optlip}
G.~Blanchard and N.~M{\"{u}}cke.
\newblock Optimal rates for regularization of statistical inverse learning
  problems.
\newblock {\em Foundations of Computational Mathematics}, 18(4):971--1013,
  2018.

\bibitem{carlen2010trace}
E.~Carlen.
\newblock Trace inequalities and quantum entropy: an introductory course.
\newblock {\em Entropy and the quantum}, 529:73--140, 2010.

\bibitem{catoni2004statistical}
O.~Catoni.
\newblock {\em Statistical Learning Theory and Stochastic Optimization: Ecole
  d'Et{\'e} de Probabilit{\'e}s de Saint-Flour XXXI - 2001}, volume 1851 of
  {\em Lecture Notes in Mathematics}.
\newblock Springer-Verlag, 2004.

\bibitem{catoni2007pacbayes}
O.~Catoni.
\newblock {\em {PAC-Bayesian Supervised Classification: The Thermodynamics of
  Statistical Learning}}, volume~56 of {\em IMS Lecture Notes Monograph
  Series}.
\newblock Institute of Mathematical Statistics, 2007.

\bibitem{cesabianchi2004onlinebatch}
N.~Cesa-Bianchi, A.~Conconi, and C.~Gentile.
\newblock On the generalization ability of on-line learning algorithms.
\newblock {\em IEEE Transactions on Information Theory}, 50(9):2050--2057,
  2004.

\bibitem{cesabianchi2006plg}
N.~Cesa-Bianchi and G.~Lugosi.
\newblock {\em Prediction, Learning, and Games}.
\newblock Cambridge University Press, Cambridge, New York, USA, 2006.

\bibitem{cover2006elements}
T.~M. Cover and J.~A. Thomas.
\newblock {\em Elements of Information Theory}.
\newblock Wiley Series in Telecommunications and Signal Processing.
  Wiley-Interscience, New York, USA, 2nd edition, 2006.

\bibitem{dai2012q-aggregation}
D.~Dai, P.~Rigollet, and T.~Zhang.
\newblock Deviation optimal learning using greedy {$Q$}-aggregation.
\newblock {\em The Annals of Statistics}, 40(3):1878--1905, 2012.

\bibitem{dalalyan2008aggregation}
A.~Dalalyan and A.~B. Tsybakov.
\newblock Aggregation by exponential weighting, sharp {PAC-Bayesian} bounds and
  sparsity.
\newblock {\em Machine Learning}, 72(1):39--61, 2008.

\bibitem{dalalyan2022simple}
A.~S. Dalalyan.
\newblock Simple proof of the risk bound for denoising by exponential weights
  for asymmetric noise distributions.
\newblock {\em arXiv preprint arXiv:2212.12950}, 2022.

\bibitem{dalalyan2012sharp}
A.~S. Dalalyan and J.~Salmon.
\newblock Sharp oracle inequalities for aggregation of affine estimators.
\newblock {\em The Annals of Statistics}, 40(4):2327--2355, 2012.

\bibitem{dziugaite2017nonvacuous}
G.~K. Dziugaite and D.~M. Roy.
\newblock Computing nonvacuous generalization bounds for deep (stochastic)
  neural networks with many more parameters than training data.
\newblock In {\em Proceedings of {U}ncertainty in {A}rtificial {I}ntelligence},
  2017.

\bibitem{vanErven2021MetaGrad}
T.~V. Erven, W.~M. Koolen, and D.~Van~der Hoeven.
\newblock Metagrad: Adaptation using multiple learning rates in online
  learning.
\newblock {\em Journal of Machine Learning Research}, 22(161):1--61, 2021.

\bibitem{forster2002relative}
J.~Forster and M.~K. Warmuth.
\newblock Relative expected instantaneous loss bounds.
\newblock {\em Journal of Computer and System Sciences}, 64(1):76--102, 2002.

\bibitem{foster1991prediction}
D.~P. Foster.
\newblock Prediction in the worst case.
\newblock {\em The Annals of Statistics}, 19:1084--1090, 1991.

\bibitem{george1986minimax}
E.~I. George.
\newblock Minimax multiple shrinkage estimation.
\newblock {\em The Annals of Statistics}, 14(1):188--205, 1986.

\bibitem{grunwald2021pac}
P.~Gr\"{u}nwald, T.~Steinke, and L.~Zakynthinou.
\newblock {PAC-B}ayes, {MAC-B}ayes and conditional mutual information: Fast
  rate bounds that handle general {VC} classes.
\newblock In {\em Conference on Learning Theory}, pages 2217--2247, 2021.

\bibitem{gyorfi2002nonparametric}
L.~Gy{\"o}rfi, M.~Kohler, A.~Krzyzak, and H.~Walk.
\newblock {\em A distribution-free theory of nonparametric regression}.
\newblock Springer, 2002.

\bibitem{haghifam2021towards}
M.~Haghifam, G.~K. Dziugaite, S.~Moran, and D.~Roy.
\newblock Towards a unified information-theoretic framework for generalization.
\newblock {\em Advances in Neural Information Processing Systems},
  34:26370--26381, 2021.

\bibitem{hannan1957approximation}
J.~Hannan.
\newblock Approximation to {Bayes} risk in repeated play.
\newblock {\em Contributions to the Theory of Games}, 3:97--139, 1957.

\bibitem{hanneke2016refined}
S.~Hanneke.
\newblock Refined error bounds for several learning algorithms.
\newblock {\em Journal of Machine Learning Research}, 17(1):4667--4721, 2016.

\bibitem{hanneke2015minimax}
S.~Hanneke and L.~Yang.
\newblock Minimax analysis of active learning.
\newblock {\em Journal of Machine Learning Research}, 16(1):3487--3602, 2015.

\bibitem{haussler1998sequential}
D.~Haussler, J.~Kivinen, and M.~K. Warmuth.
\newblock Sequential prediction of individual sequences under general loss
  functions.
\newblock {\em IEEE Transactions on Information Theory}, 44(5):1906--1925,
  1998.

\bibitem{juditsky2008mirror}
A.~Juditsky, P.~Rigollet, and A.~B. Tsybakov.
\newblock Learning by mirror averaging.
\newblock {\em The Annals of Statistics}, 36(5):2183--2206, 2008.

\bibitem{kanade2022exponential}
V.~Kanade, P.~Rebeschini, and T.~Vaskevicius.
\newblock Exponential tail local {R}ademacher complexity risk bounds without
  the {B}ernstein condition.
\newblock {\em arXiv preprint arXiv:2202.11461}, 2022.

\bibitem{koltchinskii2011oracle}
V.~Koltchinskii.
\newblock {\em Oracle Inequalities in Empirical Risk Minimization and Sparse
  Recovery Problems}, volume 2033 of {\em {\'E}cole d'{\'E}t{\'e} de
  Probabilit{\'e}s de Saint-Flour}.
\newblock Springer-Verlag Berlin Heidelberg, 2011.

\bibitem{koolen2015second}
W.~M. Koolen and T.~Van~Erven.
\newblock Second-order quantile methods for experts and combinatorial games.
\newblock In {\em Conference on Learning Theory}, pages 1155--1175, 2015.

\bibitem{lecue2009aggregation}
G.~Lecu{\'e} and S.~Mendelson.
\newblock Aggregation via empirical risk minimization.
\newblock {\em Probability Theory and Related Fields}, 145(3):591, 2009.

\bibitem{lecue2013optimality}
G.~Lecu{\'e} and S.~Mendelson.
\newblock On the optimality of the aggregate with exponential weights for low
  temperatures.
\newblock {\em Bernoulli}, 19(2):646--675, 2013.

\bibitem{lecue2014q-aggregation}
G.~Lecu{\'e} and P.~Rigollet.
\newblock Optimal learning with {Q}-aggregation.
\newblock {\em The Annals of Statistics}, 42(1):211--224, 2014.

\bibitem{leung2006information}
G.~Leung and A.~R. Barron.
\newblock Information theory and mixing least-squares regressions.
\newblock {\em IEEE Transactions on Information Theory}, 52(8):3396--3410,
  2006.

\bibitem{liang2015offset}
T.~Liang, A.~Rakhlin, and K.~Sridharan.
\newblock Learning with square loss: localization through offset {Rademacher}
  complexity.
\newblock In {\em Proceedings of The 28th Conference on Learning Theory}, pages
  1260--1285, 2015.

\bibitem{littlestone1989online2batch}
N.~Littlestone.
\newblock From on-line to batch learning.
\newblock In {\em Proceedings of the 2nd annual workshop on Computational
  Learning Theory}, pages 269--284. Morgan Kaufmann Publishers Inc., 1989.

\bibitem{lugosi2023online}
G.~Lugosi and G.~Neu.
\newblock Online-to-{PAC} conversions: {G}eneralization bounds via regret
  analysis.
\newblock {\em arXiv preprint arXiv:2305.19674}, 2023.

\bibitem{massart2007concentration}
P.~Massart.
\newblock {\em Concentration inequalities and model selection. Ecole d'Eté de
  Probabilités de Saint-Flour XXXIII}, volume 1896 of {\em Lecture Notes in
  Mathematics}.
\newblock Springer, 2007.

\bibitem{mcallester1999some}
D.~A. McAllester.
\newblock Some {PAC-Bayesian} theorems.
\newblock {\em Machine Learning}, 37(3):355--363, 1999.

\bibitem{mcallester2003pac}
D.~A. McAllester.
\newblock {PAC-Bayesian} stochastic model selection.
\newblock {\em Machine Learning}, 51(1):5--21, 2003.

\bibitem{mehta2017expconcave}
N.~Mehta.
\newblock Fast rates with high probability in exp-concave statistical learning.
\newblock In {\em Proceedings of the 20th International Conference on
  {A}rtificial {I}ntelligence and {S}tatistics}, pages 1085--1093, 2017.

\bibitem{mendelson2017aggregation}
S.~Mendelson.
\newblock On aggregation for heavy-tailed classes.
\newblock {\em Probability Theory and Related Fields}, 168:641--674, 2017.

\bibitem{mendelson2019unrestricted}
S.~Mendelson.
\newblock An unrestricted learning procedure.
\newblock {\em Journal of the {ACM}}, 66(6):1--42, 2019.

\bibitem{mourtada2022logistic}
J.~Mourtada and S.~Ga{\"\i}ffas.
\newblock An improper estimator with optimal excess risk in misspecified
  density estimation and logistic regression.
\newblock {\em Journal of Machine Learning Research}, 23(31):1--49, 2022.

\bibitem{mourtada2021robust}
J.~Mourtada, T.~Va{\v{s}}kevi{\v{c}}ius, and N.~Zhivotovskiy.
\newblock Distribution-free robust linear regression.
\newblock {\em Mathematical Statistics and Learning}, 4(3):253--292, 2021.

\bibitem{nemirovski2000nonparametric}
A.~Nemirovski.
\newblock Topics in non-parametric statistics.
\newblock {\em Lectures on Probability Theory and Statistics: Ecole d'Ete de
  Probabilites de Saint-Flour XXVIII-1998}, 28:85--277, 2000.

\bibitem{raginsky202110}
M.~Raginsky, A.~Rakhlin, and A.~Xu.
\newblock Information-theoretic stability and generalization.
\newblock {\em Information-Theoretic Methods in Data Science}, 10:302–329,
  2021.

\bibitem{rakhlin2017empirical}
A.~Rakhlin, K.~Sridharan, and A.~B. Tsybakov.
\newblock Empirical entropy, minimax regret and minimax risk.
\newblock {\em Bernoulli}, 23(2):789--824, 2017.

\bibitem{rigollet2012kullback}
P.~Rigollet.
\newblock {Kullback-Leibler} aggregation and misspecified generalized linear
  models.
\newblock {\em The Annals of Statistics}, 40(2):639--665, 2012.

\bibitem{shamir2015sample}
O.~Shamir.
\newblock The sample complexity of learning linear predictors with the squared
  loss.
\newblock {\em Journal of Machine Learning Research}, 16(108):3475--3486, 2015.

\bibitem{steinke2020reasoning}
T.~Steinke and L.~Zakynthinou.
\newblock Reasoning about generalization via conditional mutual information.
\newblock In {\em Conference on Learning Theory}, pages 3437--3452. PMLR, 2020.

\bibitem{steinwart2008svm}
I.~Steinwart and A.~Christmann.
\newblock {\em Support Vector Machines}.
\newblock Information Science and Statistics. Springer-Verlag New York, 2008.

\bibitem{tsybakov2003optimal}
A.~B. Tsybakov.
\newblock Optimal rates of aggregation.
\newblock In {\em Proceedings of the 16th Conference on Computational Learning
  Theory}, pages 303--313, 2003.

\bibitem{vanderhoeven2022}
D.~Van~der Hoeven, N.~Zhivotovskiy, and N.~Cesa-Bianchi.
\newblock A regret-variance trade-off in online learning.
\newblock {\em Advances in Neural Information Processing Systems}, 35, 2022.

\bibitem{vapnik1999nature}
V.~Vapnik.
\newblock {\em The Nature of Statistical Learning Theory}.
\newblock Springer Science \& Business Media, 1999.

\bibitem{vapnik1974theory}
V.~N. Vapnik and A.~Y. Chervonenkis.
\newblock {\em Theory of Pattern Recognition: Statistical Learning Problems (in
  Russian)}.
\newblock Nauka, Moscow, 1974.

\bibitem{vijaykumar2021localization}
S.~Vijaykumar.
\newblock Localization, convexity, and star aggregation.
\newblock {\em Advances in Neural Information Processing Systems},
  34:4570--4581, 2021.

\bibitem{vovk1998mixability}
V.~Vovk.
\newblock A game of prediction with expert advice.
\newblock {\em Journal of Computer and System Sciences}, 56(2):153--173, 1998.

\bibitem{vovk2001competitive}
V.~Vovk.
\newblock Competitive on-line statistics.
\newblock {\em International Statistical Review}, 69(2):213--248, 2001.

\bibitem{wainwright2019high}
M.~J. Wainwright.
\newblock {\em High-dimensional statistics: {A} non-asymptotic viewpoint}.
\newblock Cambridge Series in Statistical and Probabilistic Mathematics.
  Cambridge University Press, 2019.

\bibitem{wintenberger2017boa}
O.~Wintenberger.
\newblock Optimal learning with {Bernstein} online aggregation.
\newblock {\em Machine Learning}, 106(1):119--141, 2017.

\bibitem{yang2000mixing}
Y.~Yang.
\newblock Mixing strategies for density estimation.
\newblock {\em The Annals of Statistics}, 28(1):75--87, 2000.

\bibitem{zhang2006entropy}
T.~Zhang.
\newblock From $\varepsilon$-entropy to {KL}-entropy: Analysis of minimum
  information complexity density estimation.
\newblock {\em The Annals of Statistics}, 34(5):2180--2210, 2006.

\bibitem{zhang2006information}
T.~Zhang.
\newblock Information-theoretic upper and lower bounds for statistical
  estimation.
\newblock {\em IEEE Transactions on Information Theory}, 52(4):1307--1321,
  2006.

\bibitem{zhivotovskiy2018localization}
N.~Zhivotovskiy and S.~Hanneke.
\newblock Localization of {VC} classes: Beyond local {Rademacher} complexities.
\newblock {\em Theoretical Computer Science}, 742:27--49, 2018.

\end{thebibliography}
\end{document}